\newcommand{\R}{\mathbb{R}}
\newcommand{\C}{\mathbb{C}}
\newcommand{\calC}{\mathcal{C}}
\newcommand{\tcalC}{\tilde{\calC}}
\newcommand{\N}{\mathbb{N}}
\newcommand{\Z}{\mathbb{Z}}
\newcommand{\E}{\mathbb{E}}
\newcommand{\diam}{\operatorname{diam}}
\newcommand{\calK}{\mathcal{K}}
\newcommand{\calI}{\mathcal{I}}
\newcommand{\calS}{\mathcal{S}}
\newcommand{\ty}{\tilde{y}}
\newcommand{\calF}{\mathcal{F}}
\newcommand{\bB}{\bar{B}}
\newcommand{\tOmega}{\widetilde{\Omega}}
\newcommand{\wtT}{\widetilde{T}}
\newcommand{\scrD}{\mathscr{D}}
\newcommand{\calM}{\mathcal{M}}
\numberwithin{equation}{section}
\newcommand{\ud}[0]{\,\mathrm{d}}
\newcommand{\dist}[0]{\operatorname{dist}}
\newcommand{\supp}[0]{\operatorname{spt}}
\newcommand{\calD}[0]{\mathcal{D}}
\newcommand{\rest}{{\lfloor}}
\newcommand{\calA}{\mathcal{A}}
\newcommand{\scrX}{\mathscr{X}}
\theoremstyle{plain}
\newtheorem{thm}[equation]{Theorem}
\newtheorem{lem}[equation]{Lemma}
\newtheorem{prop}[equation]{Proposition}
\theoremstyle{definition}
\theoremstyle{remark}
\newtheorem{rem}[equation]{Remark}
\title[Non-homogeneous square functions on general sets]{Non-homogeneous square functions on general sets: suppression and big pieces methods} 
\author{Henri Martikainen}
\address[H.M.]{Department of Mathematics and Statistics, University of Helsinki, P.O.B. 68, FI-00014 University of Helsinki, Finland}
\email{henri.martikainen@helsinki.fi}
\thanks{H.M. is supported by the Academy of Finland through the grant
Multiparameter dyadic harmonic analysis and probabilistic methods, and is a member of the Finnish Centre of Excellence in Analysis and Dynamics Research.}
\author{Mihalis Mourgoglou}
\address[M.M.]{Departament de Matem\`atiques, Universitat Aut\`onoma de Barcelona, Edifici C Facultat de Ci\`encies, 08193 Bellaterra (Barcelona), Catalonia}
\email{mourgoglou@mat.uab.cat}
\thanks{Research of M.M. is supported by the ERC grant 320501 of the European Research Council (FP7/2007-2013).}
\author{Emil Vuorinen}
\address[E.V.]{Department of Mathematics and Statistics, University of Helsinki, P.O.B. 68, FI-00014 University of Helsinki, Finland}
\email{emil.vuorinen@helsinki.fi}
\thanks{E.V. is partially supported by T. Hyt\"onen's ERC Starting Grant  Analytic-probabilistic methods for borderline singular integrals, and is a member of the Finnish Centre of Excellence in Analysis and Dynamics Research.}
\subjclass[2010]{42B20}
\keywords{Big pieces, local $Tb$ theorems, good lambda method, conical square functions}
\begin{document}

\maketitle

\begin{abstract}
We aim to showcase the wide applicability and power of the big pieces and suppression methods in the theory of local $Tb$ theorems.
The setting is new: we consider conical square functions with cones $\big\{ x \in \R^n \setminus E: |x-y| < 2 \dist(x,E)\big\}$, $y \in E$,
defined on general closed subsets $E \subset \R^n$ supporting a non-homogeneous measure $\mu$. We obtain boundedness criteria in this generality in terms of weak type testing of measures on regular balls
$B \subset E$, which are doubling and of small boundary. Due to the general set $E$ we use metric space methods. Therefore, we also demonstrate the recent techniques from the metric space point of view, and show that they yield the most general known local $Tb$ theorems even with assumptions formulated using balls rather than the abstract dyadic metric cubes. 
\end{abstract}

\section{Introduction}
Square functions are important objects in harmonic analysis and in the theory of PDEs. For example, think of the characterisation of classical Hardy spaces in terms of vertical and conical square function estimates for Littlewood-Paley operators \cite{S}, or the characterisation of the uniformly rectifiable sets in terms of square function estimates of the double gradient of the single layer potential associated with the Laplace operator \cite{DS}. We also consider them important from the framework point of view: they aid in developing novel techniques that work for singular integrals.
In this paper we are interested in a new setup, but also in analysing the very latest \emph{methods of proof} and \emph{characterisations} of the boundedness of square functions (or singular integrals) in the context of non-homogeneous analysis.

For us characterisations of boundedness means various types of sophisticated $Tb$ theorems -- mainly of \emph{local} and also of the so called \emph{big pieces} type.
The latter big pieces type is not equally well-known, but has featured
prominently in many recent important articles, for example in connection with the breakthroughs related to the rectifiability of the harmonic measure \cite{AHMMMTV}.
Such a big pieces $Tb$ was first proved by Nazarov, Treil and Volberg \cite{NTV}  (for Cauchy integral type operators) in connection with Vitushkin's conjecture.
Their point is roughly the following: the assumptions are much weaker than in the usual $Tb$ theorems, but so are the conclusions in that one
only gets the boundedness of the given operator on some big piece.
These theorems have lately become more and more important, and there is also a highly useful connection between them and the local $Tb$ theorems, as observed by us in \cite{MMV}.
For a relatively detailed account of the entangled history of the local $Tb$ theorems we refer to \cite{MMV}.

Lowering the integrability of the appearing test functions is the key problem in local $Tb$ theorems, see e.g. \cite{AHMTT}, \cite{AR}, \cite{AY}, \cite{Ho1}, \cite{HN}, \cite{MMT}, \cite{MMV}.
Related to this and other issues, we developed in \cite{MMV} a new method to prove local $Tb$ theorems via the big pieces
and good lambda methods. Let us explain the main steps of our method later. What it allows, however, is weak type $(1,1)$ testing conditions and improving known results -- both doubling and non-doubling -- in other ways. The method is important regarding
Calder\'on--Zygmund operators too, see \cite{MMT} for the best known integrability exponents related to Hofmann's problem.
A major technical convenience of the method is that the difficult core $Tb$ argument can be carried out in the big pieces $Tb$ part, and is hence of $L^2$ nature even if the original test
functions are not.

In this paper we give the full technical execution of our method in the novel setting of
conical square functions defined on general closed subsets $E \subset \R^n$ supporting a non-homogeneous measure $\mu$. This is the first time non-homogeneous analysis is being
carried out in this particular context. It is an interesting setting with new difficulties of its own,
but also provides us an opportunity to show a full breakdown of the required components in a very general, essentially metric in nature, setting.
The setup includes two measures: one on $E$ and one on the complement $\R^n \setminus E$. Related Ahlfors--David regular theory has been developed in e.g. \cite{HMMM}
by Hofmann, Mitrea, Mitrea and Morris and in \cite{GM} by one of us and A. Herran. Our methods are completely different, not only because of the non-homogeneous setting,
but also because we prove more advanced local $Tb$ theorems using the latest methods of \cite{MMV}.
In this paper the measure $\mu$ living on $E$ can be non-doubling, but the measure used to integrate
over the complement $\R^n \setminus E$ is just an appropriately weighted Lebesgue measure. For the exact setting see Section \ref{sec:setting}.

Let us still highlight an additional benefit of our method that surfaces from this paper. It is the following technical aspect related especially to the metric space theory of local $Tb$ theorems.
With the most general possible local $Tb$ theorems (particularly with Calder\'on--Zygmund operators, general integrability exponents or non-homogeneous measures)
the passage between having the test functions $b_Q$ on some
(completely abstract) metric dyadic cubes $Q$ or having the test functions $b_B$ on metric balls $B$ does not appear to be straightforward, see e.g. the paper by Auscher--Routin \cite{AR}.
The reasons are technical: the operator testing condition does not always seem to be trivial to transfer (from balls to cubes) with general $L^p$ integrability exponents.
This is because the Hardy inequality might not be available due to some irregular underlying metric measure space, or even if it is, it is not useful
with certain exponent (which is called the super-dual case, see e.g. \cite{AR}).
Even the accretivity condition is a problem with non-homogeneous measures. But the old proofs seem to require the existence of $b_Q$ on dyadic cubes (this is required to build some dyadic
martingales).
For these reasons some of these theorems have been previously formulated involving these rather abstract cubes even in the statements.
A point we like to make is that these new methods are flexible enough in that
we can easily state and prove our extremely general local $Tb$ having the existence of the test functions $b_B$ only on very regular balls $B$.

\subsection{The setting}\label{sec:setting}
Let $E$ be a closed subset of $\R^n$ and $S \colon \R^n \setminus E \times E \to \C$ be a kernel, which, for some fixed $\alpha, \beta \in (0,1]$, $m>0$, and $K_1, K_2>0$, satisfies the size estimate
\begin{equation}\label{eq:size}
|S(x,y)| \leq K_1 \frac{1}{|x-y|^{m+\alpha}},
\end{equation}
for all $(x,y) \in \R^n \setminus E \times E$, and the $y$-H\"older estimate
\begin{equation}\label{eq:yHol}
| S(x,y)-S(x,y') | \leq K_2 \frac{|y-y'|^{\beta}}{|x-y|^{m+\alpha+\beta}}, 
\end{equation}
where $x \in \R^n \setminus E, y,y' \in E$ and  $|y-y'| \leq \frac{|x-y|}{2}$.  If $\mu$ is a non-negative finite measure with support in $E$, or if $\mu$ is a measure of order $m$ in $E$  (see definition below), we define the integral operator $T_\mu$ on $\bigcup_{p \in [1,\infty]}L^p(\mu)$ by
$$
T_\mu f(x) := \int_E S(x,z)f(z) \ud \mu(z), \quad x \in \R^n \setminus E. 
$$
Note that the integral is absolutely convergent. Let  $\sigma$ be a measure  in $\R^n \setminus E$ given by
$$
\sigma(A) := \int_A \frac{\ud m_n(x)}{d(x,E)^n},
$$
where $A \subset \R^n \setminus E$ is any Lebesgue measurable set. Here $d(x,E)$ stands for the distance of the point $x$ to $E$ and $m_n$ is the Lebesgue measure in $\R^n$. For a point $y \in E$, we denote the \emph{cone} $\Gamma(y)$ at $y$ by
$$
\Gamma(y):=\big\{ x \in \R^n \setminus E: |x-y| < 2 d(x,E)\big\}.
$$

We can now define the \emph{conical square function} $\calC_\mu$ on $\bigcup_{p \in [1,\infty]} L^p(\mu)$ by setting
$$
\calC_\mu f(y):= \Big( \int_{\Gamma(y)} |T_\mu f(x)|^2 d(x,E)^{2\alpha} \ud \sigma(x) \Big)^\frac{1}{2}, \quad y \in E.
$$
Note that we will also use square functions with other measures than $\mu$. If $\mathcal{M}(E)$ is the set of complex measures with support in $E$, then for $\nu \in \calM(E)$, $x\in \R^n \setminus E$ and $y \in E$ we define
$$
T\nu(x)  :=\int_E S(x,z) \ud \nu(z)
$$
and 
$$
\calC \nu (y) :=\Big( \int_{\Gamma(y)} |T \nu (x)|^2 d(x,E)^{2 \alpha} \ud \sigma(x) \Big)^\frac{1}{2}.
$$

We will need to use various truncated versions of the square function. For any $t>0$ and $y \in E$ define the truncated cones 
$$
\Gamma_t(y):= \{x \in \Gamma(y) \colon d(x,E) > t\}\quad
\textup{and}\quad
\Gamma^t(y) :=\{ x \in \Gamma(y) \colon d(x,E) \leq t\}.
$$
If $0<s<t$ we set $\Gamma_s^t(y):= \Gamma_s(y) \cap \Gamma^t(y)$. The corresponding truncated square functions, defined with integration over the truncated cones only, are denoted for instance by $\calC^t$ or $\calC_{\mu,s}^t$, depending on the situation. As an example we have
$$
\calC_\mu^t f(y):= \Big( \int_{\Gamma^t(y)} |T_\mu f(x)|^2 d(x,E)^{2\alpha} \ud \sigma(x) \Big)^\frac{1}{2}, \quad y \in E.
$$

Before going any further, let us introduce some notation which is necessary for the statement of our main theorem.

\subsection{Notation and key definitions}

An open ball in $\R^n$ with center $x \in \R^n$ and radius $r>0$ is denoted by $B(x,r)$, while an open ball in $E$ with center $y \in E$ and radius $r>0$ is defined as $B_E(y,r):=B(y,r) \cap E$. Often, when there is no danger of confusion, we  may drop the subscript $E$. We  write $\bB(x,r)$ and $\bB_E(y,r)$  for the corresponding closed balls and  $r(B)$ for the radius of the ball $B$. If we talk about a ball without specifying whether it is open or closed, then it should be understood that it can be either one.

For two constants $A,B\geq 0$ the notation $A \lesssim B$ means that there exists an absolute constant $C>0$ such that $A \leq CB$. We write $A \lesssim_{\alpha, \beta, \dots} B$ to indicate the dependence of the constant $C$ on $\alpha, \beta$, etc. Two sided estimates $A \lesssim B \lesssim A$ are abbreviated as $A \sim B$. 

If $ \nu \in \calM(E)$, then $|\nu|$ denotes its {\it total variation}. We say that a measure $\mu$ is {\it of order $m$ in $E$} if $\mu$ is non-negative, $\supp \mu \subset E$  and 
\begin{equation}\label{order m}
\mu(B (y,r)) \lesssim r^m
\end{equation}
holds for all $y  \in E$ and $r>0$. If $\mu$ is a non-negative Borel measure or a complex Borel measure in $\R^n$ and $F \subset \R^n$ is a Borel set, then $\mu \lfloor F$ is a measure defined by $\mu \lfloor F (G)= \mu(F \cap G)$, where $G$ is any Borel set.

Let $a,b \geq1,\kappa>0$ and suppose $\mu$ is a non-negative Borel measure in $\R^n$.  A ball $B(x_0,r)$ is said to be {\it $(a,b)$-doubling} (with respect to $\mu$) if 
\begin{equation}\label{def. of doubling}
\mu(B(x_0,a r)) \leq b \mu(B(x_0,r)).
\end{equation}
The ball $B(x_0,r)$ is said to have $\kappa$-small boundary if for all $s \in [0,1]$ there holds that
\begin{equation}\label{def. of small boundary}
\mu\big(\{x \in \R^n \colon r(1-s) < |x-x_0|<r(1+s)\}\big) \leq \kappa s \, \mu(B(x_0,3r)).
\end{equation}
We say that a ball $B_E(y,r)$ is $(a,b)$-doubling or that it has {\it $\kappa$-small boundary} if $B(y,r)$ has the corresponding property.  These concepts are defined similarly with closed balls just by replacing the open balls in \eqref{def. of doubling} and \eqref{def. of small boundary} with closed balls (the left hand side of \eqref{def. of small boundary} stays the same).
 
\subsection{Statement of the main theorem and further discussion}
We will next formulate our main local $Tb$ theorem for $\calC_\mu$. First, for experts and non-experts alike let us try to identify the main points:
\begin{itemize}
\item A testing measure $\nu_B$ is given on each \emph{regular} (i.e. doubling and of small boundary) surface \emph{ball} $B \subset E$.
\item The testing measure (or somewhat less generally the testing function) $\nu_B$ is required to be supported on $B$ and accretive (normalised to the condition $\nu_B(B) = \mu(B)$).
What is important is that it need not satisfy strong estimates, only the $L^1$ type condition $|\nu_B|(B) \leq C_1 \mu(B)$ and some rather weak \emph{quantified} absolute continuity assumption (assumption 4) below). For example, the latter technical condition
is automatically satisfied by H\"older's inequality should the measure be a function $\nu_B = b_B \,d\mu$ satisfying a slightly stronger $L^{1+\epsilon}$ type condition. That is why it does
not appear in more classical formulations.
\item The testing condition on the operator side is completely decoupled from the regularity assumption on $\nu_B$. We only require the weak type estimate
$\sup_{\lambda>0} \lambda^s \mu\big(\{ y \in B: \calC^{r(B)} \nu_B (y) > \lambda \} \big) \leq C_2 |\nu_B |(B)$ for some $s > 0$ (e.g. $s = 1$). In fact, we may only require
this estimate outside some small enough exceptional set $U_B \subset B$.
\item Using weak type testing is also in line with keeping the condition necessary for the boundedness:
$L^2$ boundedness implies such a condition with $s=1$ using the boundedness of $\calC$ from the set of finite measures
to $L^{1,\infty}(\mu)$.
\end{itemize}

\begin{thm}\label{thm:main}
Suppose $\mu$ is a measure of order $m$ in $E$. Let $b, \kappa>0$ be big enough constants depending only on the dimension $n$, and let $\varepsilon_0 \in (0,1)$ and  $C_1>0$ be given constants. Assume that for every $(10, b)$-doubling closed ball $B$ in $E$ with a $\kappa$-small boundary (with respect to $\mu$) there exists a complex measure $\nu_B \in \calM(E)$ with the following properties:
\begin{enumerate}
\item $\supp \nu_{B} \subset B$;
\item $\nu_B(B) = \mu(B)$; 
\item $|\nu_B|(B) \leq C_1 \mu(B)$;
\item If $A \subset B$ is a Borel set with $\mu(A) \leq \varepsilon_0 \mu(B)$, then $|\nu_B|(A) \leq  \frac{1}{16C_1} |\nu_B|(B)$.
\end{enumerate}

Furthermore, assume that we are given constants $s,C_2>0$, and in every ball $B$ as above a set $U_B \subset B$, so that
\begin{enumerate}[a)]
\item $| \nu_B | (U_B) \leq  \frac{1}{16C_1} |\nu_B | (B)$;
\item $\sup_{\lambda>0} \lambda^s \mu\big(\{ y \in B \setminus U_B: \calC^{r(B)} \nu_B (y) > \lambda \} \big) \leq C_2 |\nu_B |(B)$.
\end{enumerate}

Under these assumptions the square function $\calC_\mu$ is bounded in $L^p(\mu)$ for every $p \in (1,\infty)$ with norm depending on $p$ and the preceding constants.
\end{thm}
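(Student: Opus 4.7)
The plan is to follow the suppression plus big pieces strategy from \cite{MMV}. The hypotheses are too weak for a direct $L^2$ $Tb$ on all of $E$: the test measures $\nu_B$ are only $L^1$-accretive, and the testing in~(b) is merely weak type. Instead, I would aim to show that on every regular ball $B_0$ there is a \emph{big piece} $F \subset B_0$ with $\mu(F) \geq \delta \mu(B_0)$ on which the truncated $\calC_\mu^{r(B_0)}$ is bounded in $L^2(\mu\lfloor F)$. Once such big pieces boundedness holds in every regular ball, a good-$\lambda$ argument promotes it to global $L^p(\mu)$ boundedness of $\calC_\mu$ for all $p \in (1,\infty)$.

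First I would set up a metric dyadic structure on $(E,\mu)$ adapted to non-doubling measures (in the style of Hyt\"onen--Kairema / David--Mattila cubes) and a random grid, so that the standard good/bad cube probabilistic reduction is available. Next comes the \emph{suppression} step: replace the kernel $S$ by a version $S_\Psi$ that vanishes near an exceptional set $U_{B_0}^\ast$ consisting of $U_{B_0}$ together with the union of stopping cubes obtained below; the resulting suppressed square function agrees with $\calC_\mu$ off $U_{B_0}^\ast$, and the error it introduces is absorbed because $|\nu_{B_0}|(U_{B_0}^\ast)$ is a small fraction of $|\nu_{B_0}|(B_0)$ by hypotheses~(a) and~(4). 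Simultaneously I would pass from regular balls $B$ to dyadic cubes using the $\kappa$-small-boundary and $(10,b)$-doubling hypotheses; this is exactly the flexibility advertised in the introduction and is needed because the stopping-time argument lives on cubes while the testing lives on balls.

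The core step, and the main obstacle, is the $L^2$ big pieces estimate. I would run a stopping-time construction on the metric dyadic cubes inside $B_0$ starting from $\nu_{B_0}$: stop at $Q$ when (i) $|\nu_{B_0}|(Q) \gg \mu(Q)$, (ii) some measurable $A \subset Q$ concentrates too much of $|\nu_{B_0}|$ on too little $\mu$ (contradicting~(4)), or (iii) the weak-type testing bound~(b) fails on $Q$. Assumption~(4), playing the role of an $L^{1+\eps}$ integrability in classical formulations, is precisely what quantifies the accretivity and delivers a Carleson packing of stopping cubes; this is where the non-homogeneous technicalities concentrate. The complement of the stopping region, minus $U_{B_0}$, is the big piece $F$. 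On $F$ one builds $\nu_{B_0}$-adapted martingale differences, expands $\calC_\mu f$ into diagonal, far, and paraproduct pieces, and estimates each: the diagonal and far parts via the kernel bounds \eqref{eq:size} and \eqref{eq:yHol} together with the $m$-order hypothesis \eqref{order m}, and the paraproduct via an $L^2$-to-Carleson argument that converts the weak-type testing in~(b) into a Carleson measure estimate. Because $E$ is just a closed subset of $\R^n$ and all cubes are metric, every geometric manipulation must go through the $y$-H\"older estimate and the conical integration against $\sigma$ rather than any smoothness in $x$.

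Finally, with big pieces $L^2$ boundedness of $\calC_\mu^{r(B_0)} \nu_{B_0}$ in hand for every regular $B_0$, I would convert it into a good-$\lambda$ inequality: points where $\calC_\mu f$ is large while a suitable non-centered maximal function remains small must live in a controlled fraction of some enclosing regular ball. Iterating and integrating yields $L^p(\mu)$ for $p \in (1,\infty)$. The two technical hurdles I expect are (a) the ball-to-cube transfer of the testing hypothesis without a Hardy inequality and without assuming the test objects are functions, and (b) running the stopping-time and paraproduct argument entirely with the measure $\nu_{B_0}$ (not a function $b_{B_0}\,d\mu$) so as not to lose the generality of assumption~(4).
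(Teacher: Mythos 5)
Your macroscopic strategy — reduce to finding on each regular ball a big piece where a good $L^2$ bound holds, then promote via a non-homogeneous good-$\lambda$ inequality, with suppression supplying the missing pointwise control on the test object — is indeed the paper's strategy, and your identification of the two technical pressure points (ball-to-cube transfer, and working with a measure $\nu_{B_0}$ rather than a function $b_{B_0}\,d\mu$) is accurate. However, the internal mechanism of the core $Tb$/big pieces step as you describe it diverges from the paper in a way that matters, and part of it would not work as stated.

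You propose a three-way stopping-time on metric cubes inside $B_0$: stop when $|\nu_{B_0}|(Q) \gg \mu(Q)$, when assumption~(4) is violated on a subset of $Q$, or when ``the weak-type testing bound~(b) fails on $Q$''; you then extract a Carleson packing of stopping cubes from~(4) and use it to close the paraproduct. Two problems. First, the weak-type testing hypothesis does not localize to dyadic cubes: it is a single statement about the set $B\setminus U_B$, and there is no meaningful way to ask whether it ``fails on $Q$'' for a proper subcube — one would need a test measure $\nu_Q$ adapted to $Q$, which is not available when $Q$ is a cube and not a regular ball. Second, assumption~(4) is not an $L^{1+\eps}$-type quantification of accretivity that yields a Carleson packing; in the paper it serves a different purpose, namely to ensure that $\mu$-small sets are also $|\nu_B|$-small, which is used to control $|\nu_B|$ of the exceptional sets and, crucially, to establish the two-sided comparability $\frac{1}{16C_1}\mu \lesssim |\nu_B| \lesssim \frac{C_1}{\varepsilon_0}\mu$ on a large subset $B\setminus H_2$ (Proposition~\ref{prop:main}); that comparability is what lets one pass back and forth between $L^2(\mu\lfloor G)$ and $L^2(|\nu_B|\lfloor G)$.

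The paper closes the paraproduct differently, and this is where suppression earns its keep. Writing $\nu_B = b\,|\nu_B|$, the weak-type hypothesis is used to show that the superlevel set $S_0 = \{\calC_\mu b > \lambda_0\}$ has small measure. For each $y\in S_0$ one records a truncation scale $t(y)$ (and a density scale $r(y)$ tied to where $\mu$ fails to have growth of order $m$), suppresses the kernel on the open region $A=\bigcup_{y\in S_0} A_y\subset \R^n\setminus E$, and shows $\tcalC_\mu b\in L^\infty(\mu)$ \emph{pointwise on all of $B$}. The only stopping-time that appears is the accretivity one defining $T_\omega$. With $\tcalC_\mu b$ uniformly bounded, the Carleson condition for the paraproduct coefficients $a_Q$ is immediate — no packing argument from~(4) is needed. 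So the suppression is not a cutoff near an exceptional subset of $E$, as in your description; it is a cutoff at specific heights in $\R^n\setminus E$ above each bad point of $E$, designed to force the $L^\infty$ bound that makes the paraproduct trivial. The set $S$ excluded from the final big piece is then $\bigcup_{y\in S_0}B_E(y,10\max(t(y),r(y)))$, and it is small precisely because of the weak-type testing.

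In short: right skeleton, but the stopping rule~(iii) is not well-defined, the role you assign to~(4) is not what delivers the paraproduct, and the suppression in the paper is organized around a level set of $\calC_\mu b$ and truncation scales in the complement of $E$, not around a stopping region in $E$.
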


\begin{rem}\label{rem:findiam}
When $\diam(E) < \infty$ one can restrict the testing surface balls to have radius $r \in (0, \diam(E)]$. But then one needs to interpret
$E$ to be one of the balls so that $\nu_E$ exists. 
\end{rem}

Such a theorem was obtained in \cite{MMV} for vertical square functions in the upper half space $\R^{n+1}_+$ and in \cite{MMT} for maximal truncations of Calder\'on-Zygmund operators. Although the method is the same, to prove Theorem \ref{thm:main} in this generality, one has to overcome non-trivial issues stemming from the geometrically complicated environment on which our objects are defined.

It is not completely evident which measure $\sigma$ one should or could use on the complement $\R^n \setminus E$ to define the integration over the cones $\Gamma(y) \subset \R^n \setminus E$, $y \in E$.
In \cite{MMO} we used $\sigma = \mu \times dt/t^{m+1}$ with conical square functions
in the upper half-space $\R^{k+1}_+$ and with a measure $\mu$ of order $m$ supported in $\R^k$ (this is essentially the setting $n = k+1$ and $E = \R^k \times \{0\}$).
A key requirement is that $\sigma$ ought to be a measure on $\R^n \setminus E$
that assigns a bounded measure to Whitney regions associated with $E$ (like the measure $\sigma = \mu \times dt/t^{m+1}$ in the upper-half space does:
$\sigma(B \times (r(B)/2, r(B)) \lesssim \mu(B) / r(B)^m \lesssim 1$).
For some parts of the theory there is another important aspect in play: we also need to be able to exploit the fact that
nearby cones have some geometric cancellation and then see this appropriately on the $\sigma$ measure side (this also works with $\sigma = \mu \times dt/t^{m+1}$ in the upper half-space
because of the appropriate Lebesgue part).
For this latter reason we use just the Lebesgue measure weighted with $d(x,E)^{-n}$ here i.e.
for us $\sigma = d(x,E)^{-n}\,dm_n$ like defined above. As is apparent from the upper half-space situation, in some scenarios multiple other natural choices also work.
Further investigation on this issue could be appropriate -- but for us the main thing was to remove regularity assumptions on $\mu$.
Compare also to \cite{HMMM} where two different ADR measures are used.

 \subsection{Outline of the method and proof}
 The key steps of our method are as follows:
 \begin{enumerate}
 \item The non-homogeneous good lambda method by Tolsa \cite{ToBook}. This can simply be thought of as a highly flexible way to glue some local results
 in to the desired global result. It says that it is enough to find in every $(10,b)$-doubling closed ball $B$ in $E$ with a $\kappa$-small boundary a subset $G_B \subset B$ with $\mu(G_B) \gtrsim \mu(B)$, where
$$
 \calC \colon \calM(E) \to L^{1,\infty}(\mu\lfloor G_B)
 $$
 is bounded with a constant $C$ that is independent of $B$, that is, the inequality
 $$
 \mu\big( \{ y \in G_B \colon \calC \nu (y) > \lambda\} \big) \leq \frac{C}{\lambda} |\nu|(E)
 $$
 holds for all $\lambda>0$ and $\nu \in \calM(E)$. We give a proof in our context in Section \ref{sec:good lambda}.
 \item  Prove a big pieces global $Tb$: Theorem \ref{the big piece Tb}. This is interesting on its own right but is also the key step in proving the required local result needed to apply
 the non-homogeneous good lambda method. The theorem says that given a closed ball $B$, a finite measure $\mu$ with support in $B$ and a function $b \in L^\infty(\mu)$, then under certain very weak  assumptions one finds a subset $G \subset B$ with $\mu(G) \gtrsim \mu(B)$ so that the square function $\calC_\mu$ is bounded in $L^2(\mu \lfloor G_B)$. The proof relies on the idea of \emph{suppression} -- an amazing technique of Nazarov--Treil--Volberg designed to cook up an operator that behaves significantly better than the original one but also agrees with the original one on a large enough set.
In this paper we show how to suppress these conical square functions.
\item The third step is to prove that the assumptions of the non-homogeneous good lambda method hold in our situation.
This is Proposition \ref{prop:main}.
So we take an arbitrary closed $(10,b)$-doubling ball $B$ in $E$ with a $\kappa$-small boundary. Then, by assumption, we have the test measure $\nu_B$. By writing the polar decomposition $ \nu_B= b   |\nu_B|$, where $|b(y)|=1$ for all $y \in B$, we have
$$
\calC\nu_B= \calC_{|\nu_B|}b.
$$  
Next, we want to use the big pieces global $Tb$ theorem with the measure $|\nu_B|$ and the bounded function $b$. Using stopping times we can do this, and we get a set $G_B$ where $\calC_{|\nu_B|}$ is bounded in $L^2(|\nu_B| \lfloor G_B)$.  This will be done in such a way that $|\nu_B|\lfloor G_B = \varphi \mu \lfloor G_B$ for some function $\varphi \sim 1$, whence we can come back to the measure $\mu$ and derive the desired result that $\calC_\mu$ is bounded in $L^2(\mu \lfloor G_B)$. A weak $(1,1)$ argument is required to conclude that the assumptions of the good lambda theorem hold.
 \end{enumerate} 
We also need various preliminaries (Section \ref{preliminaries}). Some geometric considerations related to cones are presented in Section \ref{geometric problem}.
Certain technical details are also given in the Appendixes. 
We use metric dyadic cubes and their randomisation purely as a technical tool (the cubes do not appear in the statement of the main theorem). This is needed
for the $Tb$ argument in the big pieces $Tb$ theorem.
The randomisation
originates from the paper by Hyt\"onen--Martikainen \cite{HM} but we use the latest version from Auscher--Hyt\"onen \cite{AH} with some further modifications.
 
\section{Preliminaries}\label{preliminaries}  

We begin by collecting here some notation and  basic estimates that will be used in the later sections.  We shall use maximal functions on the set $E$.  Let $\mu$ be a non-negative locally finite Borel measure in $E$. The {\it centred} Hardy-Littlewood maximal operator is defined for any non-negative locally finite Borel measure $\nu$ with support in $E$ by   
$$
M_\mu \nu(y):= \sup_{r>0} \frac{\nu (B(y,r))}{\mu(B(y,r))}, \quad y \in E,
$$
and the {\it radial} one by 
$$
M^m \nu(y):= \sup_{r>0} \frac{\nu (B(y,r))}{r^m}, \quad y \in E.
$$
If  $\nu  \in \mathcal M(E)$, we set $M_\mu \nu:= M_\mu|\nu|$ and $M^m \nu:= M^m |\nu|$. If $f\in L^1_{loc}(\mu)$ we write $M_\mu f:=M_\mu (|f|\mu)$.

 A basic property of $M_\mu$ is that it is bounded from $\calM(E)$  into $L^{1,\infty}(\mu)$ and from $L^p(\mu)$ into $L^p(\mu)$, $p \in (1,\infty)$. If $\mu$ is of order $m$ there holds that $M^m\nu(y) \lesssim M_\mu \nu(y)$ for every $y$ and $\nu \in \mathcal M(E)$, and therefore the boundedness properties of $M_\mu$ transfer to $M^m$.

\begin{lem}\label{annulus calculation}
Let $y \in E$, $r>0$ and suppose $\mu$ is a non-negative Borel measure with $\supp \mu \subset E$. If $f \in L^1_{loc}(\mu)$, then 
$$
 \int_{E} \frac{|f(z)| \ud \mu (z)}{(r+ |z-y|)^{m+\alpha}} \lesssim r^{-\alpha}M^m(|f|\mu)(y),  
$$
where the implicit constant is independent of $r$ and $y$.
\end{lem}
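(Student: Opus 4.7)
The plan is a standard dyadic annular decomposition around the point $y$, using that $M^m$ controls the $\mu$-mass of every ball centered at $y$.

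First, I would split the domain of integration into the central ball $B_0 := B(y,r)$ and the dyadic annuli $A_k := B(y, 2^k r) \setminus B(y, 2^{k-1} r)$ for $k \geq 1$, so that
\[
\int_E \frac{|f(z)|\ud\mu(z)}{(r + |z-y|)^{m+\alpha}} = \int_{B_0} \frac{|f(z)|\ud\mu(z)}{(r+|z-y|)^{m+\alpha}} + \sum_{k=1}^\infty \int_{A_k} \frac{|f(z)|\ud\mu(z)}{(r+|z-y|)^{m+\alpha}}.
\]
On $B_0$ the denominator is bounded below by $r^{m+\alpha}$, so that piece is at most
\[
r^{-(m+\alpha)} \int_{B(y,r)} |f|\ud\mu \leq r^{-(m+\alpha)} \cdot r^m M^m(|f|\mu)(y) = r^{-\alpha} M^m(|f|\mu)(y),
\]
by the very definition of $M^m$.

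For each annulus $A_k$ with $k \geq 1$, we have $|z-y| \geq 2^{k-1} r$, hence $(r+|z-y|)^{m+\alpha} \gtrsim (2^k r)^{m+\alpha}$. Since $A_k \subset B(y, 2^k r)$, using the definition of $M^m$ again gives
\[
\int_{A_k} \frac{|f(z)|\ud\mu(z)}{(r+|z-y|)^{m+\alpha}} \lesssim (2^k r)^{-(m+\alpha)} \int_{B(y,2^k r)} |f|\ud\mu \leq (2^k r)^{-(m+\alpha)} (2^k r)^m M^m(|f|\mu)(y) = 2^{-k\alpha} r^{-\alpha} M^m(|f|\mu)(y).
\]

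Summing the geometric series $\sum_{k\geq 1} 2^{-k\alpha}$, which converges because $\alpha > 0$, and adding the contribution from $B_0$, yields the desired bound with an implicit constant depending only on $\alpha$ and $m$. There is no real obstacle here; the only mild point to notice is that the estimate uses $\alpha > 0$ in an essential way to make the annular tail summable, and that the bound is independent of $r$ and $y$ because all occurrences of $r$ cancel after invoking the homogeneous bound $\nu(B(y,\rho)) \leq \rho^m M^m \nu(y)$ at scale $\rho = 2^k r$.
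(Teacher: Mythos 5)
Your proof is correct and is exactly the argument the paper has in mind: the paper explicitly says the lemma follows from "a standard calculation dividing the integration area" into $B_E(y,r)$ and the dyadic annuli $B_E(y,2^{k+1}r)\setminus B_E(y,2^k r)$, which is precisely your decomposition. The estimates on each piece and the summation of the geometric series (using $\alpha>0$) match the intended proof.
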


The proof of Lemma \ref{annulus calculation} is a standard calculation dividing the integration area as 
$$
E = \big( B_E(y,r)\big) \cup \bigcup_{k=0}^\infty \big( B_E(y,2^{k+1}r)\setminus B_E(y,2^kr)\big).
$$  
 
The next lemma is a simple geometric observation:

\begin{lem}\label{distance to a point in E}
Let $y,z \in E$ and $x \in \Gamma(y)$. Then
$$
|x-z| \sim |x-y|+|y-z|.
$$
\end{lem}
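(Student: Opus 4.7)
The plan is to prove the equivalence $|x-z| \sim |x-y|+|y-z|$ by handling the two directions separately; the upper bound is the triangle inequality, and the lower bound uses the defining property of the cone $\Gamma(y)$ together with the fact that $z\in E$.

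For the upper estimate, I would simply note $|x-z| \le |x-y|+|y-z|$ by the triangle inequality, giving $|x-z| \lesssim |x-y|+|y-z|$ with constant $1$.

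For the lower estimate $|x-y|+|y-z| \lesssim |x-z|$, the key observation is that since $z\in E$ we have $d(x,E)\le |x-z|$. The cone condition $x\in \Gamma(y)$ then reads $|x-y| < 2d(x,E) \le 2|x-z|$, which immediately controls $|x-y|$. Applying the triangle inequality once more gives
\[
|y-z| \le |x-y|+|x-z| \le 2|x-z|+|x-z| = 3|x-z|,
\]
so adding the two bounds yields $|x-y|+|y-z|\le 5|x-z|$.

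There is no real obstacle here; the lemma is a direct consequence of the cone definition combined with $d(x,E)\le |x-z|$ for $z\in E$, and the implicit constants are absolute (independent of $y,z,x$, and of the dimension).
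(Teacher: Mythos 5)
Your proof is correct, and the key fact driving it is the same as in the paper: since $z\in E$ one has $d(x,E)\le |x-z|$, which combined with the cone condition $|x-y|<2d(x,E)$ controls $|x-y|$ by $|x-z|$. Where you differ is in the organization: the paper proves the lower bound by a case split on whether $|y-z|\ge 2|x-y|$ or $|y-z|<2|x-y|$ (using only the triangle inequality in the first case and the cone condition in the second), whereas you avoid the dichotomy entirely by first bounding $|x-y|\le 2|x-z|$ and then feeding that into the triangle inequality for $|y-z|$. This makes the constant explicit ($5$, versus unspecified implicit constants in the paper) and is a touch more streamlined, though both arguments are equally elementary and rest on the same two ingredients.
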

\begin{proof}
If $|y-z| \geq 2|x-y|$, then 
$$
|x-z| \geq |y-z|-|x-y| \sim |y-z|+|x-y|.
$$ 
On the other hand if $|y-z|<2|x-y|$, then 
$$
|x-z| \geq d(x,E) >\frac{1}{2}|x-y| \sim |x-y|+|y-z|.
$$
\end{proof}

\begin{lem}\label{whitney section of a cone}
For every $y \in E$ and $t>s>0$ there holds that
$$
\sigma\big(\Gamma^t_{s}(y)\big) \lesssim \left(\frac{t}{s}\right)^n.
$$
\end{lem}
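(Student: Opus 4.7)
The plan is to bound the integrand uniformly and then bound the volume of the domain of integration by a ball. By definition,
$$
\sigma(\Gamma_s^t(y)) = \int_{\Gamma_s^t(y)} \frac{\ud m_n(x)}{d(x,E)^n},
$$
so I would exploit the two constraints $s < d(x,E) \leq t$ and $|x-y| < 2 d(x,E)$ separately.

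First, I would use the lower bound $d(x,E) > s$ on $\Gamma_s^t(y)$ to control the weight in the integrand: $d(x,E)^{-n} \leq s^{-n}$. Next, I would observe that on $\Gamma^t(y)$ the cone condition gives $|x-y| < 2d(x,E) \leq 2t$, so $\Gamma_s^t(y) \subset B(y,2t)$ (as a subset of $\R^n$), and hence $m_n(\Gamma_s^t(y)) \leq m_n(B(y,2t)) \lesssim t^n$. Combining these two observations,
$$
\sigma(\Gamma_s^t(y)) \leq \frac{1}{s^n}\, m_n(B(y,2t)) \lesssim \frac{t^n}{s^n},
$$
which is the claim.

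There is no real obstacle here; the estimate is a direct consequence of the geometry of the cone together with the aperture-weighted definition of $\sigma$. The only thing to be mindful of is that $\sigma$ lives on $\R^n\setminus E$, so the ambient Lebesgue measure bound is really $m_n(B(y,2t)\setminus E) \leq m_n(B(y,2t))$, which of course changes nothing.
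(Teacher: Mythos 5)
Your proof is correct and takes essentially the same approach as the paper: bound the weight $d(x,E)^{-n}$ by $s^{-n}$ using the lower truncation, note that the cone condition confines $\Gamma_s^t(y)$ to $B(y,2t)$, and compare with the Lebesgue measure of that ball.
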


\begin{proof}
Fix some $y \in E$ and  $t>s>0$. If $x \in \Gamma_{s}^{t} (y)$, then we have 
$$
|x-y| < 2d(x,E) \leq 2t.
$$
Also, $x$ satisfies $d(x,E) \geq s$. From these we directly get that
$$
\sigma\big(\Gamma^t_{s}(y)\big) = \int_{\Gamma_{s}^t(y)} \frac{\ud m_n(x)}{d(x,E)^n} \leq \frac{m_n(B(y,2t))}{s^n} \sim \left( \frac{t}{s}\right)^n.
$$
\end{proof}

From Lemma \ref{whitney section of a cone} we get the following two lemmas:

\begin{lem}\label{cone computation}
Let  $s,t>0$. Then for every $y \in E$ and $r>0$ there holds that
$$
\int_{\Gamma(y)} \frac{d(x,E)^s}{(d(x,E)+r)^{s+t}} \ud \sigma(x) \lesssim_{s,t} r^{-t}.
$$
\end{lem}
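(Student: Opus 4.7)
The plan is to decompose the cone $\Gamma(y)$ into Whitney-type annular shells according to the magnitude of $d(x,E)$ relative to $r$, estimate the integrand on each shell, and sum a geometric series. For each $k \in \Z$, let
$$
A_k := \Gamma_{2^k r}^{2^{k+1}r}(y) = \{x \in \Gamma(y) : 2^k r < d(x,E) \leq 2^{k+1} r\},
$$
so that $\Gamma(y) = \bigcup_{k \in \Z} A_k$ (up to a set of $\sigma$-measure zero). By Lemma \ref{whitney section of a cone} applied with the parameters $2^k r$ and $2^{k+1} r$, each shell satisfies the uniform bound $\sigma(A_k) \lesssim 1$, with an implicit constant independent of $k$, $r$, and $y$.

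On the shell $A_k$ we have $d(x,E) \sim 2^k r$ and $d(x,E) + r \sim \max(2^k r, r) = 2^{\max(k,0)} r$, so
$$
\frac{d(x,E)^s}{(d(x,E)+r)^{s+t}} \sim \frac{(2^k r)^s}{(2^{\max(k,0)} r)^{s+t}}
= r^{-t} \cdot \begin{cases} 2^{-kt}, & k \geq 0, \\ 2^{ks}, & k < 0. \end{cases}
$$
Integrating over $A_k$ and using the uniform measure bound, the contribution of each shell is $\lesssim r^{-t} \cdot 2^{-kt}$ for $k \geq 0$ and $\lesssim r^{-t} \cdot 2^{ks}$ for $k < 0$.

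Summing over $k \in \Z$, the two geometric series converge precisely because $t > 0$ (controlling the tail $k \to +\infty$) and $s > 0$ (controlling the tail $k \to -\infty$):
$$
\int_{\Gamma(y)} \frac{d(x,E)^s}{(d(x,E)+r)^{s+t}} \ud \sigma(x) \lesssim r^{-t} \Big( \sum_{k \geq 0} 2^{-kt} + \sum_{k < 0} 2^{ks} \Big) \lesssim_{s,t} r^{-t}.
$$
There is no real obstacle here: the only thing to keep track of is that Lemma \ref{whitney section of a cone} yields a bound for $\sigma(A_k)$ that is \emph{independent} of the scale $2^k r$, which is what allows the geometric-series argument to close with a constant depending only on $s$ and $t$.
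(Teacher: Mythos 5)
Your proof is correct and takes essentially the same approach as the paper: the paper likewise decomposes the cone into Whitney shells $\Gamma_{2^{-k-1}r}^{2^{-k}r}(y)$ (for $d(x,E)\le r$) and $\Gamma_{2^kr}^{2^{k+1}r}(y)$ (for $d(x,E)>r$), bounds each shell's $\sigma$-measure uniformly via Lemma \ref{whitney section of a cone}, and sums the two geometric series controlled by $s>0$ and $t>0$ respectively. Writing the decomposition as a single union over $k\in\Z$ rather than splitting at $d(x,E)=r$ is a purely cosmetic difference.
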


\begin{proof}
Fix some $y \in E$ and $r>0$. Then
\begin{equation*}
\int_{\Gamma^r(y)}  \frac{d(x,E)^s}{(d(x,E)+r)^{s+t}} \ud \sigma(x) \leq \sum_{k=0}^{\infty} \sigma\big(\Gamma^{2^{-k}r}_{2^{-k-1}r}(y) \big) \frac{(2^{-k}r)^s}{r^{s+t}}
\lesssim_s \frac{1}{r^t}
\end{equation*}
and
\begin{equation*}
\int_{\Gamma_r(y)}  \frac{d(x,E)^s}{(d(x,E)+r)^{s+t}} \ud \sigma(x) \leq \sum_{k=0}^\infty \sigma\big( \Gamma_{2^kr}^{2^{k+1}r} (y) \big) \frac{1}{(2^kr)^t} \lesssim_t \frac{1}{r^t}.
\end{equation*}
\end{proof}

\begin{lem}\label{truncated is bounded}
Suppose $\mu$ is a measure of order $m$ in $E$. For every  $t>s>0$ and $p \in (1,\infty)$ the truncated square function $\calC_{\mu,s}^t$ is bounded in $L^p(\mu)$.
\end{lem}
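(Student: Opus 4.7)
The plan is to obtain a pointwise bound $\calC_{\mu,s}^t f(y) \lesssim_{t/s} M_\mu f(y)$ and then invoke the $L^p(\mu)$ boundedness of the Hardy--Littlewood maximal operator. The truncation $s < d(x,E) \leq t$ together with the cone condition $|x-y| < 2 d(x,E)$ makes the relevant region very small in $\sigma$-measure, so no cancellation or square function machinery is needed; the estimate is essentially a size estimate.

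First, I would pointwise estimate $|T_\mu f(x)|$ for $x \in \Gamma(y)$. Since $x\in\Gamma(y)$ implies $d(x,E)\leq |x-y| < 2 d(x,E)$, Lemma \ref{distance to a point in E} gives $|x-z| \sim |x-y|+|y-z| \sim d(x,E)+|y-z|$ for every $z \in E$. Combined with the size estimate \eqref{eq:size} and Lemma \ref{annulus calculation} applied with $r = d(x,E)$, this yields
\begin{equation*}
|T_\mu f(x)| \lesssim \int_E \frac{|f(z)|\ud\mu(z)}{(d(x,E)+|y-z|)^{m+\alpha}} \lesssim d(x,E)^{-\alpha}\, M^m(|f|\mu)(y).
\end{equation*}

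Next, I would plug this pointwise bound into the definition of $\calC_{\mu,s}^t f(y)$. The factors $d(x,E)^{-2\alpha}$ and $d(x,E)^{2\alpha}$ cancel, leaving
\begin{equation*}
\calC_{\mu,s}^t f(y)^2 \lesssim M^m(|f|\mu)(y)^2 \int_{\Gamma_s^t(y)} \ud\sigma(x) = M^m(|f|\mu)(y)^2\, \sigma(\Gamma_s^t(y)).
\end{equation*}
Lemma \ref{whitney section of a cone} gives $\sigma(\Gamma_s^t(y)) \lesssim (t/s)^n$, so
\begin{equation*}
\calC_{\mu,s}^t f(y) \lesssim (t/s)^{n/2}\, M^m(|f|\mu)(y).
\end{equation*}

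Finally, since $\mu$ is of order $m$, the remarks preceding Lemma \ref{annulus calculation} give $M^m(|f|\mu)(y) \lesssim M_\mu f(y)$, and the standard $L^p(\mu)$-boundedness of $M_\mu$ for $p \in (1,\infty)$ finishes the proof. No real obstacle arises; the only point to keep in mind is the implicit dependence of the constant on $t/s$, which is acceptable since these parameters are fixed in the statement.
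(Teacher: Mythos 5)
Your proof is correct and follows essentially the same route as the paper: a pointwise bound $|T_\mu f(x)|\lesssim d(x,E)^{-\alpha}M^m(|f|\mu)(y)$ via Lemmas \ref{distance to a point in E} and \ref{annulus calculation}, cancellation of the $d(x,E)^{2\alpha}$ factor, the $\sigma$-measure bound on $\Gamma_s^t(y)$ from Lemma \ref{whitney section of a cone}, the domination $M^m\lesssim M_\mu$, and finally the $L^p(\mu)$-boundedness of $M_\mu$. The only cosmetic difference is that the paper writes the intermediate bound using $|x-y|^{-\alpha}$ (equivalent since $|x-y|\sim d(x,E)$ on the cone) and does not spell out the explicit $(t/s)^{n/2}$ dependence.
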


\begin{proof}
Fix some numbers $0<s<t$ and $p \in (1,\infty)$, and suppose $f \in L^p(\mu)$.  If $y \in E$ and $x \in \Gamma(y)$, then Lemmas \ref{annulus calculation} and \ref{distance to a point in E} give 
\begin{equation}\label{T dominated by M}
\begin{split}
|T_\mu f(x)| \lesssim \int_E \frac{|f(z)|}{|x-z|^{m+\alpha}}   \ud \mu(z) & \sim \int_E \frac{|f(z)|}{(|x-y|+|y-z|)^{m+\alpha}} \ud \mu(z) \\
& \lesssim |x-y|^{-\alpha} M^m(|f| \mu)(y) \\ 
&\lesssim |x-y|^{-\alpha}M_\mu f (y).
\end{split}
\end{equation}
Combining this with Lemma \ref{whitney section of a cone} yields
\begin{equation*}
\begin{split}
\calC_{\mu,s}^t f(y) = \Big( \int_{\Gamma_s^t(y)} | T_\mu f(x)|^{2} d(x,E)^{2\alpha} \ud \sigma(x) \Big)^\frac{1}{2} &\lesssim \sigma(\Gamma_s^t(y))^\frac{1}{2}M_\mu f(y) \\ &\lesssim_{s,t}  M_\mu f(y).
\end{split}
\end{equation*}

Thus $\calC_{\mu,s}^t f$ is pointwise dominated by $M_\mu$, and the claim follows from boundedness of $M_\mu$ in $L^p(\mu)$.
\end{proof}

\subsection*{Random dyadic cubes}
In the big piece global $Tb$ theorem and in the Whitney decomposition related to the good lambda method  we shall use systems of dyadic cubes in $E$. Even though we are in $\R^n$, the existence of these is a metric space argument, because there is no direct way of building dyadic systems in \emph{arbitrary} closed subsets of $\R^n$. The specific construction we use is from \cite{AH}, and we elaborate here on how we use it. 

Let $x \in  E$ be a fixed point. The construction of dyadic cubes begins by choosing a set of \emph{reference dyadic points}. Write $ \delta = \frac{1}{1000}$. Set $\scrX_0$ to be any maximal $1$-separated subset of $E$ so that $x \in \scrX_0$. Let $K \in \Z, K \geq 0,$ and assume that the sets $\scrX_k$ and $\scrX_{-k}$ have been chosen for $k\in \{0,\dots,K\}$. Define $\scrX_{K+1}$ to be any maximal $\delta^{K+1}$-separated subset of $E$ such that $\scrX_K \subset \scrX_{K+1}$, and $\scrX_{-K-1}$ to be any maximal  $\delta^{-K-1}$-separated subset of $\scrX_{-K}$ so that $x \in \scrX_{-K-1}$. Continue this way to get the collections $\scrX_k$ for $k \in \Z$. Then $\scrX:= \bigcup_k\scrX_k$ is the set of reference dyadic points. A point in $\scrX$ is denoted by $x^k_\alpha$, where $k$ indicates that $x^k_\alpha \in \scrX_k$, and $\alpha$ indexes the different points in $\scrX_k$. This is precisely as in \cite{AH}, except that here we require that $x \in \scrX_k$ for every $k$. The role of this fixed point will be explained below.

The rest of the construction we take directly as in \cite{AH}. We have a probablity space $\Omega=(\{0, \dots,L\} \times \{1, \dots,M\})^\Z$, where the numbers $L$ and $M$ are related to the properties of $E$ as a geometrically doubling space. That $E$ is geometrically doubling means that there exists a constant $N$ such that every ball $B$ in $E$ with radius $r$ contains at most $N$ points whose distances from each other are at least $r/2$. The set $\Omega$ is equipped with the natural $\sigma$-algebra and the probability measure $\mathbb{P}$ so that the coordinate mappings 
$$
\omega \mapsto \omega (k) \in \{0, \dots, L\} \times \{1, \dots ,M\},
$$ 
where $k \in \Z$, are independent and uniformly distributed over the finite set $\{0, \dots, L\} \times \{1, \dots ,M\}$.

With every $\omega \in \Omega$  there is associated  a set $\{z^k_\alpha (\omega)\}_{k,\alpha}$ of slightly shifted reference dyadic points. To every $z^k_\alpha(\omega)$ corresponds a dyadic cube $Q^k_\alpha (\omega) \subset E$, and $\calD_k(\omega)$ is the collection all cubes $Q^k_\alpha (\omega)$ with the fixed generation $k$. The dyadic lattice $\calD(\omega)$ is $\bigcup_{k \in \Z}\calD_k(\omega)$. To be precise, in \cite{AH} certain open and closed dyadic cubes are constructed, and from these we form our cubes using finite unions and intersections as is done in \cite{HM}, Theorem 4.4.

We list a few relevant properties of the dyadic systems and introduce some notation that will be used later. Let $\omega \in \Omega$.
\begin{itemize}

\item Every dyadic cube $Q^k_\alpha(\omega)$ is a measurable subset of $E$ such that 
\begin{equation}\label{new centers}
B_E(z^k_\alpha(\omega), \frac{\delta^k}{6}) \subset Q^k_\alpha(\omega) \subset B_E(z^k_\alpha(\omega),6 \delta^k).
\end{equation}
We call the point $z^k_{\alpha}(\omega)$ the center of $Q^k_\alpha(\omega)$, and we set $\ell(Q^k_\alpha(\omega)):= \delta^k$ to be the ``sidelength'' of $Q^k_\alpha(\omega)$.
Also, the original reference points satisfy
\begin{equation}\label{center of a cube}
B_E(x^k_\alpha, \frac{\delta^k}{8}) \subset Q^k_\alpha(\omega) \subset B_E(x^k_\alpha,8\delta^k).
\end{equation}

\item The cubes in a given generation are pairwise disjoint and cover the whole set $E$, that is, $Q^k_\alpha(\omega) \cap Q^k_\beta(\omega) = \emptyset$ if $\alpha \not= \beta$ and $E= \bigcup_{\alpha} Q^k_\alpha(\omega)$ for every $k \in \Z$.

\item The dyadic cubes are nested in the sense that for any two cubes $Q^k_\alpha(\omega)$ and $Q^l_\beta(\omega)$ one of the following holds: $Q^k_\alpha(\omega) \cap Q^l_\beta(\omega) = \emptyset, Q^k_\alpha(\omega) \subset Q^l_\beta(\omega)$ or $Q^l_\beta(\omega) \subset Q^k_\alpha(\omega)$.

\item If $Q^k_\alpha(\omega) \in \calD(\omega)$ we denote by ch$(Q^k_\alpha(\omega))$ the collection of cubes $Q^{k+1}_\beta \in \calD_{k+1}(\omega)$ such that $Q^{k+1}_\beta(\omega) \subset Q^k_\alpha(\omega)$. These are called \emph{children} of $Q^k_\alpha(\omega)$.
\item $\widehat{Q^k_\alpha}(\omega)$ denotes the unique cube in $\calD_{k-1}(\omega)$ that contains $Q^k_\alpha(\omega)$. 
\item In the rest of the paper we usually write $Q$ in place of $Q^k_\alpha(\omega)$. Nevertheless, one should keep in mind that this is only a short hand, and there is always the specified generation $k$ such that $Q \in \calD_k(\omega)$. This is important because it may happen that $Q^k_\alpha(\omega)=Q^l_\beta(\omega)$ as \emph{sets in $E$} even if $k \not=l$. In the summations over dyadic cubes below, we are always summing over pairs $(k,\alpha)$. If $Q=Q^k_\alpha(\omega)$, then its center $z^k_\alpha(\omega)$ will be denoted by $c_Q$.

\item Let $B$ be a ball in $E$ with center $y \in E$ and radius $r$. Construct the random dyadic systems $\calD(\omega)$ in $E$ with the initial requirement that $y\in \bigcap_{k \in \Z}\scrX_k$. For any $k \in \Z$ there exists $\alpha(k)$ so that $y=x^k_{\alpha(k)}$.   
Specify $k_0 \in \Z$ by the condition $r< \frac{\delta^{k_0}}{8} \leq \delta^{-1}r$.  
Then define
$$
Q_B(\omega) := Q^{k_0}_{\alpha(k_0)}(\omega)
$$
and 
\begin{equation}\label{random system}
\calD_{B}(\omega):= \big\{ Q^l_\beta(\omega) \in \calD(\omega): l\geq k_0, Q^l_\beta(\omega) \subset Q_{B}(\omega)\big\}.
\end{equation}
With these definitions we have $B \subset Q_B(\omega)$  by \eqref{center of a cube}.

\end{itemize}

We shall also use a variant of the notion of \emph{good dyadic cubes} introduced first by Nazarov, Treil, and Volberg  \cite{NTV1}, and then used in the metric space setting for instance in \cite{HM}. Let again $B$ be some ball in $E$. Using the center of $B$ as the fixed reference dyadic point construct the 
dyadic systems $\calD_B(\omega) \subset \calD(\omega), \omega \in \Omega$, as described above.  Fix some $\omega_0 \in \Omega$. Let $\gamma= \frac{\alpha}{2(m+\alpha)}$ and $r \in \Z, r>0$.  A cube $R \in \calD(\omega_0)=:\calD_{0}$ is said to be \emph{$\calD_B(\omega)$-good} (with parameters $(\gamma,r)$) if for  all $Q \in \calD_B(\omega)$ with $\ell(Q) \geq \delta^{-r}\ell(R)$ there holds that 
\begin{equation}\label{def. of goodness}
\max \big(d(R,Q), d(R, E \setminus Q) \big) \geq \ell(R)^\gamma \ell(Q)^{1-\gamma}.
\end{equation} 
Otherwise $R$ is said to be $\calD_B(\omega)$-\emph{bad}. Note that the systems $\calD_0$ and $\calD(w)$ depend on the ball $B$, but later when we use these systems it should be clear what the ball is. Also, with our definition, every cube $R \in \calD_0$ with $\ell(R) \geq \delta^r\ell(Q_B(\omega_0))$ is automatically $\calD_B(\omega)$-good.

A key property of these good and bad cubes is that under a random choice of  $\omega \in \Omega$  a cube $R \in \calD_{0}$ has a small probability of being  $\calD_B(\omega)$-bad. The version of this fact that we will use is formulated in the following lemma.   For every $k,l \in \Z, k \leq l$,  define 
$$
\Omega_k^l =\{ \omega \in \Omega: \omega(m)=\omega_0(m) \text{ if } m<k \text{ or } m>l\}.
$$
We equip $\Omega_k^l$ again with the natural probability measure with which the coordinates $\omega(m), k \leq m \leq l$, are independent and uniformly distributed over $\{0, \dots, L\} \times \{1, \dots ,M\}$. Note that this is a finite probability space.

\begin{lem}\label{probability of badness}
There exist two constants $C=C(L,M)>0$ and $ \eta \in (0,1]$ so that the following holds. Fix some big enough (depending on $\gamma$) goodness parameter $r$. Suppose $B \subset E$ is a ball in $E$ and let $\calD_0=\calD(\omega_0)$ and $\calD_B(\omega) \subset \calD(\omega)$ be the dyadic lattices related to this ball as described above.  Assume $k_0 \in \Z$ is such that $ \ell(Q_B(\omega))=\delta^{k_0}$ for some, and hence for every, $\omega \in \Omega$.  Let $k_1 \in \Z$ be any number such that $k_1 \geq k_0+r$. Then, for every cube $R \in \calD_0$ with $\ell(R) \geq \delta^{k_1} $ it holds that
\begin{equation}\label{prob. of bad}
\mathbb{P}\big(\{ \omega \in \Omega^{k_1}_{k_0} : R \text{ is } \calD_B(\omega) \text{-bad}\}\big) \leq C \delta^{\gamma r \eta}.
\end{equation}
\end{lem}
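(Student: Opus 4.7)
The plan is to follow the classical good/bad dyadic cube probability argument of Nazarov--Treil--Volberg, in the metric form developed by Auscher--Hyt\"onen \cite{AH} and Hyt\"onen--Martikainen \cite{HM}. Only small bookkeeping adjustments are needed in order to accommodate the restricted probability space $\Omega_{k_0}^{k_1}$ and the localisation to $\calD_B(\omega)$.

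First I would fix $R \in \calD_0$ with $\ell(R) = \delta^{k_R}$, $k_R \leq k_1$, and assume $k_R \geq k_0 + r$ (otherwise there is no $Q \in \calD_B(\omega)$ with $\ell(Q) \geq \delta^{-r}\ell(R)$ and $R$ is automatically $\calD_B(\omega)$-good). By a union bound over the finitely many scales $l$ with $k_0 \leq l \leq k_R - r$ it suffices to control
$$
p_l := \mathbb{P}\bigl( \exists Q \in \calD_l(\omega) \text{ with } \max(d(R,Q), d(R, E \setminus Q)) < \delta^{\gamma k_R + (1-\gamma) l} \bigr),
$$
where I have dropped the requirement $Q \subset Q_B(\omega)$, which only enlarges the event. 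Since $\ell(R) = \delta^{k_R}$ is much smaller than $\delta^{\gamma k_R + (1-\gamma) l}$ (as $l < k_R$ and $\gamma \in (0,1)$), and since $R$ has diameter $\lesssim \delta^{k_R}$, the event inside $p_l$ implies that the center $c_R$ of $R$ lies within distance $\lesssim \delta^l \cdot \delta^{\gamma(k_R - l)}$ of the boundary (in $E$) of the unique cube $Q^l_{c_R}(\omega) \in \calD_l(\omega)$ containing $c_R$: a neighbouring level-$l$ cube $Q$ witnessing badness is detected equally through the pair of terms $d(R,Q)$ and $d(R, E \setminus Q_{c_R}^l(\omega))$ via the nested structure.

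The key probabilistic input, proved in \cite{AH}, is the boundary decay estimate
$$
\mathbb{P}\bigl( \{ \omega \in \Omega : d(y, \partial Q^l_y(\omega)) < t \delta^l \} \bigr) \leq C t^\eta, \quad t \in (0,1],
$$
for some $\eta = \eta(L,M) \in (0,1]$ and every $y \in E$, $l \in \Z$. The event on the left depends only on the coordinates $\omega(m)$ with $m$ in a bounded window around the single scale $l$; since our scales lie in $[k_0, k_1 - r]$, choosing the goodness parameter $r$ large enough (depending only on this window size from \cite{AH} and on $\gamma$) guarantees that every relevant window sits inside $[k_0, k_1]$. Hence the same estimate holds verbatim on the restricted space $\Omega_{k_0}^{k_1}$ with the uniform measure on the free coordinates. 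Applying it with $y = c_R$ and $t \sim \delta^{\gamma(k_R - l)}$ yields $p_l \leq C \delta^{\gamma \eta (k_R - l)}$.

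Summing the geometric series over $j = k_R - l \geq r$ gives the desired bound
$$
\mathbb{P}(\{\omega \in \Omega_{k_0}^{k_1} : R \text{ is } \calD_B(\omega)\text{-bad}\}) \leq C \sum_{j \geq r} \delta^{\gamma \eta j} \leq C \delta^{\gamma \eta r}.
$$
The main (modest) obstacle is the bookkeeping above: verifying that for each scale $l$ in the admissible range the event in $p_l$ only depends on coordinates lying inside $[k_0,k_1]$, so that the probability estimate from \cite{AH} transfers without loss from $\Omega$ to $\Omega_{k_0}^{k_1}$. Once $r$ is taken large enough relative to the scale-window of the AH construction, this issue disappears and the rest is a routine geometric sum.
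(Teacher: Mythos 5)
Your overall strategy matches the paper's own argument (Appendix A, following \cite{AH}): reduce badness to the event ``$c_R$ lies near the boundary of its containing level-$l$ cube'', do a union bound over admissible scales $l\in[k_0,k_R-r]$, quote the boundary-decay probability estimate from \cite{AH}, and sum the resulting geometric series. The structure is right and the conclusion follows.

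One point in your reasoning is misstated, though, and since it is precisely the point that the lemma is about (the transfer from $\Omega$ to $\Omega_{k_0}^{k_1}$), it deserves correction. You write that the boundary event at scale $l$ ``depends only on the coordinates $\omega(m)$ with $m$ in a bounded window around the single scale $l$'', and that taking $r$ large makes every such window fit inside $[k_0,k_1]$. Both claims miss the mark. The window of relevant coordinates is not of bounded size: for $t\sim\delta^{\gamma(k_R-l)}$ the AH argument uses coordinates $\omega(p)$ with $p\geq l$ and $\delta^p\gtrsim t\,\delta^l$, i.e.\ $p$ ranging over roughly $[\,l,\ (1-\gamma)l+\gamma k_R\,]$, an interval whose length grows like $\gamma(k_R-l)$ and can be arbitrarily large. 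The reason this interval sits inside $[k_0,k_1]$ has nothing to do with the size of $r$: its left endpoint is $\geq k_0$ because $l\geq k_0$, and its right endpoint is a convex combination of $l$ and $k_R$, hence $\leq k_R\leq k_1$. That containment is automatic. The role of $r$ in the paper's proof is different: one needs $r\gamma$ large enough (the paper asks $r\gamma>2$) to guarantee that the window contains at least one integer $p$, so that the probability estimate actually produces decay; $r$ is not needed to squeeze the window into $[k_0,k_1]$. With this corrected justification, your argument agrees with the paper's.
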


The point in the reduction to these finite spaces $\Omega_{k_0}^{k_1}$ is a certain technical problem related to measurability in the big piece global $Tb$ theorem. If one replaces $\Omega_{k_0}^{k_1}$ with  $\Omega$ in \eqref{prob. of bad}, then the inequality would follow from \cite{HM}, Theorem 10.2. In a similar way as in \cite{HM}  Inequality \eqref{prob. of bad} is also essentially proved in \cite{AH}, Theorem 2.11.  In Appendix \ref{appendix} we sketch the proof of Lemma \ref{probability of badness} just by repeating arguments in \cite{AH} and noticing that it is enough to use $\Omega_{k_0}^{k_1}$ instead of the whole $\Omega$.

\section{Proof of the Main Theorem}\label{proof of main}

Assuming the non-homogeneous good lambda method (Theorem \ref{thm:goodlambda}) and the big pieces global $Tb$ theorem (Theorem \ref{the big piece Tb}), we give the proof of our main theorem, Theorem \ref{thm:main}, here. The following proposition is the main ingredient:

\begin{prop}\label{prop:main}
Let  $C_1,C_2\geq 1$ and $\varepsilon_0 \in (0,1)$ be given constants. Let $B$ be a  closed ball with radius $r$ in $E$.  Suppose $\mu$ is a measure of order $m$ in $E$ and that there exists a  complex measure $\nu \in M(E)$ such that 
\begin{enumerate}
\item  $ \supp \nu \subset B $;
\item $\nu(B)= \mu(B)$;
\item $|\nu|(B) \leq C_1 \mu(B)$;
\item If $A \subset B$ is a subset so that $\mu(A) \leq \varepsilon_0 \mu(B)$, then $|\nu|(A) \leq \frac{1}{16C_1} |\nu|(B)$.
\end{enumerate}
Assume further that there exists some $s>0$ and a Borel set $U \subset B$ so that 
\begin{enumerate}[a)]
\item $|\nu|(U) \leq \frac{1}{16C_1} |\nu|(B)$;
\item $\sup_{\lambda>0} \lambda ^s \mu(\{y \in B \setminus U: \calC^r \nu(y) >\lambda \}) \leq C_2 |\nu|(B)$.
\end{enumerate}

Then there exists a set $G \subset B\setminus U$ with $\mu(G) > \varepsilon_0 \mu(B)$ so that 
$$
\|1_G \calC_\mu f \|_{L^2(\mu)}  \lesssim  \|f\|_{L^2(\mu)}
$$
holds for all $f \in L^2(\mu)$ with $\{y \in E \colon  f(y) \not=0\} \subset G$.
\end{prop}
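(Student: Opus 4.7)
The plan is to use the polar decomposition $\nu = b\,|\nu|$ with $|b| \equiv 1$ on $B$ (so that $\calC\nu = \calC_{|\nu|}b$), apply the big pieces global $Tb$ theorem (Theorem \ref{the big piece Tb}) to the finite measure $|\nu|$ with the accretive function $b$ to obtain a subset of $B$ of large $|\nu|$-measure on which $\calC_{|\nu|}$ is $L^2(|\nu|)$-bounded, and finally transfer this boundedness to $L^2(\mu)$-boundedness of $\calC_\mu$ using the absolute continuity $|\nu| \sim \mu$ on a subset.

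First I would construct a \emph{good density set} $H \subset B$ with $|\nu|(H) \geq \bigl(1 - (32 C_1)^{-1}\bigr)|\nu|(B)$ on which $|\nu| \lfloor H = \varphi\, \mu \lfloor H$ with $\varphi \sim 1$. Applying assumption (4) to a $\mu$-null Borel set already gives $|\nu|_s(B) \leq (16 C_1)^{-1}|\nu|(B)$. A dyadic Calder\'on--Zygmund stopping time in a random system $\calD_B(\omega)$ then identifies two exceptional regions: the union $\Omega^{\mathrm{up}}$ of maximal cubes with $|\nu|(Q) > \Lambda \mu(Q)$, satisfying $\mu(\Omega^{\mathrm{up}}) \leq \Lambda^{-1}|\nu|(B) \leq \varepsilon_0 \mu(B)$ for $\Lambda$ large enough in terms of $C_1/\varepsilon_0$, and the union $\Omega^{\mathrm{low}}$ of maximal cubes with $|\nu|(Q) < \eta \mu(Q)$, satisfying $|\nu|(\Omega^{\mathrm{low}}) \leq \eta \mu(B) \leq \eta|\nu|(B)$. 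Invoking (4) on $\Omega^{\mathrm{up}}$ further gives $|\nu|(\Omega^{\mathrm{up}}) \leq (16 C_1)^{-1}|\nu|(B)$, while $\eta$ can be chosen small enough that $|\nu|(\Omega^{\mathrm{low}}) \leq (32 C_1)^{-1}|\nu|(B)$. Setting $H := B \setminus (\Omega^{\mathrm{up}} \cup \Omega^{\mathrm{low}})$ yields the measure bound, and Lebesgue differentiation along the dyadic system produces $\eta \leq \varphi \leq \Lambda$ on $H$.

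Second, I would apply Theorem \ref{the big piece Tb} to the measure $|\nu|$, the function $b$, and the exceptional set $U_\star := U \cup (B \setminus H)$, which by (a) and Step 1 satisfies $|\nu|(U_\star) \lesssim C_1^{-1}|\nu|(B)$. The accretivity $\int_B b\,d|\nu| = \nu(B) = \mu(B) \sim |\nu|(B)$ is immediate from (2)--(3). For the weak-type testing of $\calC^r_{|\nu|}b = \calC^r\nu$ in $|\nu|$-measure, the comparability $|\nu| \sim \mu$ on $B \setminus U_\star \subset H$ transforms hypothesis (b) into
$$
|\nu|\bigl(\{y \in B \setminus U_\star : \calC^r\nu(y) > t\}\bigr) \lesssim_{C_1,C_2,\varepsilon_0} |\nu|(B)\, t^{-s},\qquad t>0,
$$
which is the input required by Theorem \ref{the big piece Tb}. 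That theorem then produces a set $G \subset B \setminus U_\star \subset H \cap (B \setminus U)$ with $|\nu|(G) \gtrsim |\nu|(B)$ on which $\calC_{|\nu|}$ is bounded on $L^2(|\nu| \lfloor G)$.

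Third, arranging the constants so that the implicit lower bound gives $|\nu|(G) > (16 C_1)^{-1}|\nu|(B)$, the contrapositive of assumption (4) immediately yields $\mu(G) > \varepsilon_0 \mu(B)$. For any $f \in L^2(\mu)$ with $\supp f \subset G$, writing $f\,d\mu = (f/\varphi)\,d|\nu|$ on $G$ gives $T_\mu f = T_{|\nu|}\bigl((f/\varphi)1_G\bigr)$ and hence $\calC_\mu f = \calC_{|\nu|}\bigl((f/\varphi)1_G\bigr)$; combining the $L^2(|\nu| \lfloor G)$-boundedness of $\calC_{|\nu|}$ with $\varphi \sim 1$ on $G$ produces $\|1_G \calC_\mu f\|_{L^2(\mu)} \lesssim \|f\|_{L^2(\mu)}$. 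The main obstacle is the constant bookkeeping in Step 1: the stopping parameters $\Lambda, \eta$, the errors $|\nu|(\Omega^{\mathrm{up}}), |\nu|(\Omega^{\mathrm{low}}), |\nu|(U)$, and the constant produced by Theorem \ref{the big piece Tb} must all close so that the output $|\nu|$-measure genuinely crosses the critical threshold $(16 C_1)^{-1}|\nu|(B)$ needed to activate the contrapositive of (4), and the $\mu$-weak testing (b) (with its $\mu$-exceptional set $U$) must be cleanly converted to the $|\nu|$-weak testing condition of Theorem \ref{the big piece Tb}.
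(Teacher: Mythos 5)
Your overall strategy coincides with the paper's: polar decomposition $\nu = b\,|\nu|$, a stopping-time density argument producing $\varphi = d|\nu|/d\mu \sim 1$ off a small exceptional set, application of Theorem \ref{the big piece Tb} to $(|\nu|, b)$, and transfer back to $\mu$ via $\varphi\sim 1$; the contrapositive of (4) to get $\mu(G) > \varepsilon_0\mu(B)$ is also the paper's route. However, there are two genuine gaps in verifying the hypotheses of Theorem \ref{the big piece Tb}.

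First, condition (2) of Theorem \ref{the big piece Tb} requires that the exceptional Borel set $H$ (your $U_\star$) contain every closed ball $B_\rho$ with $|\nu|(B_\rho) \geq C_0 \rho^m$. Your $U_\star = U \cup (B\setminus H)$ has no reason to satisfy this — it only controls the dyadic density and the testing exception, not the local growth of $|\nu|$. The paper inserts an additional set $H_1 \supset \{M^m\nu > p_0\}$, constructed so that any ball violating the growth bound is swallowed into $H_1$, and checks $|\nu|(H_1) \le (16C_1)^{-1}|\nu|(B)$ via the weak $(1,1)$ bound for $M^m$ combined with hypotheses (3) and (4). Without this set, Theorem \ref{the big piece Tb} cannot be invoked (the estimate \eqref{annulus, transit} inside its proof relies essentially on that growth condition off $H$).

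Second, condition (1) of Theorem \ref{the big piece Tb} requires $|\nu|(T_\omega \cup H) \le \delta_0 |\nu|(B)$ for the stopping set $T_\omega$ of maximal cubes with bad accretivity. You observe that the global accretivity $\int_B b\,d|\nu| = \mu(B) \geq C_1^{-1}|\nu|(B)$ is immediate, but this does not by itself bound $|\nu|(T_\omega)$. The paper runs the standard averaging computation
$$
\tfrac{1}{C_1}|\nu|(Q_B(\omega)) \leq |\nu|(Q_B(\omega)\setminus T_\omega) + \eta|\nu|(T_\omega),
$$
and solves to get $|\nu|(T_\omega) \leq \bigl(1 - \tfrac{1}{2C_1}\bigr)|\nu|(B)$ after tuning $\eta(C_1)$; you would need to include this step, and then combine it with the bounds on $|\nu|(H_1)$, $|\nu|(H_2)$, and $|\nu|(U)$ to get a total $\le \bigl(1-\tfrac{1}{4C_1}\bigr)|\nu|(B)$, which feeds into the $\tfrac{1-\delta_0}{3}$ lower bound of Theorem \ref{the big piece Tb} to produce $|\nu|(G)\ge \tfrac{1}{12C_1}|\nu|(B) > \tfrac{1}{16C_1}|\nu|(B)$. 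A more minor omission: hypothesis (b) gives weak-type testing only for the truncated $\calC^r\nu$, while Theorem \ref{the big piece Tb} needs it for $\calC_{|\nu|}b = \calC\nu$; the paper first shows $\calC_r\nu \lesssim 1$ pointwise on $B$ so the full weak-type estimate follows.
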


\begin{proof}
We may assume that $\mu(E \setminus B)=0$, because once we prove this with such measures then in the general case we may apply it with $\mu \lfloor B$. 

Let $y \in B$. Then
\begin{equation*}
\begin{split}
\calC_{r} \nu(y)  \lesssim |\nu|(B) \Big(\int_{\Gamma_r(y)} d(x,E)^{-2m} \ud \sigma(x)\Big)^{\frac{1}{2}}  \lesssim \frac{|\nu|(B)}{r^m} 
\leq C_1 \frac{\mu(B)}{r^m} \lesssim 1,
\end{split}
\end{equation*}
where we used a similar estimate as in the proof of Lemma \ref{cone computation} in the second step.
Hence 
$$
\sup_{\lambda>0} \lambda^s \mu\big(\{ y \in B \setminus U \colon \calC_r\nu(y) > \lambda\}\big) \lesssim \mu(B) \leq |\nu|(B),
$$
and this combined with the weak type assumption $b)$ shows that 
\begin{equation}\label{untruncated weak}
\sup_{\lambda>0} \lambda ^s \mu(\{y \in B \setminus U: \calC \nu(y) >\lambda \}) \leq C_2' |\nu|(B)
\end{equation}
holds for some constant $C_2'$.

Let $b$ be a function such that $|b(y)|=1$ for all $y \in B$ and $\nu=b  |\nu|$. Note that 
$$
\calC \nu (y) = \calC (b|\nu|)(y)=\calC_{|\nu|}b(y)$$
for all $y \in E$. The idea is to use the big piece global $Tb$ theorem \ref{the big piece Tb} with the measure $|\nu|$ and the bounded function $b$. Hence we have to verify the corresponding assumptions listed in the statement of Theorem \ref{the big piece Tb}. Also, to come back to the measure $\mu$, we will show that $|\nu|$ and $\mu$ are comparable in a big piece of $B$.

To begin, recall the dyadic lattices $\calD_B(\omega)$ related to the ball $B$.  Fix some random parameter $\omega \in \Omega$ and let $\mathcal{A}_\omega$ be the collection of the maximal dyadic cubes $R \in  \calD_B(\omega)$ such that
$$
\Big| \int_R b \ud |\nu| \Big| \leq \eta |\nu|(R),
$$  
where $\eta \in (0,1)$ is a small number to be specified. Set $T_\omega:= \bigcup_{R \in \calA_\omega} R$. First estimate
$$ 
\int_{Q_B(\omega)} b \ud |\nu| = \nu(Q_B(\omega))=\mu(B) \geq \frac{1}{C_1}|\nu|(Q_B(\omega)).
$$
Using this we get
\begin{equation*}
\begin{split}
\frac{1}{C_1}|\nu|(Q_B(\omega)) & \leq  \int_{Q_B(\omega)} b \ud  |\nu| =  \int_{Q_B(\omega) \setminus T_\omega} b \ud |\nu|+ \sum_{R \in \calA_\omega} \int_R b \ud |\nu| \\
& \leq |\nu|(Q_B(\omega) \setminus T_\omega) + \eta |\nu|(T_\omega) \\
&= |\nu|(Q_B(\omega))+(\eta-1)|\nu|(T_\omega),
\end{split}
\end{equation*}
which can be written as 
$$
|\nu|(T_\omega) \leq \frac{C_1-1}{C_1(1-\eta)} |\nu|(Q_B(\omega)).
$$
If $\eta=\eta(C_1)$ is chosen suitably, then 
$$
\frac{C_1-1}{C_1(1-\eta)}=1-\frac{1}{2C_1}<1.
$$

Next, define
$$
H_0:=\{y \in E: M^m \nu(y) >p_0\},
$$
where $p_0>0$ will be fixed. For every $y \in H_0$ set 
$$
r(y):= \sup\Big\{r>0: \frac{|\nu|\big(B(y,r)\big)}{r^m}>p_0\Big\}.
$$
If $y \in H_0$ and $z \in B(y,r(y))$, then 
$$
\frac{|\nu|\big(B(z,2r(y))\big)}{(2r(y))^m} \geq 2^{-m} \frac{|\nu|\big(B(y,r(y))\big)}{r(y)^m} \geq 2^{-m}p_0.
$$
Hence, if we define 
$$
H_1:= \bigcup_{y \in H_0} B(y,r(y)),
$$
we see that $H_1 \subset \{y \in E \colon M^m \nu (y) \geq 2^{-m}p_0\}$. 

Because of the weak $(1,1)$ boundedness of the maximal function and the assumption (3), we have
\begin{equation*}
\mu(H_1) \leq \mu(\{y \in E \colon M^m \nu (y) \geq 2^{-m}p_0\}) \leq C\frac{2^m}{p_0} |\nu|(B)
 \leq C\frac{2^m }{p_0}C_1 \mu(B).
\end{equation*}
Set $p_0 :=C2^m C_1\varepsilon_0^{-1}$, whence the assumption (4) gives $|\nu|(H_1) \leq \frac{1}{16C_1} |\nu|(B)$. 

Now we prove the comparability of the measures $\mu$ and $|\nu|$ in a subset of $B$. Define $\calF_1$ to be the set of maximal cubes $R \in \calD_B(\omega)$ such that
$$
|\nu|(R) \leq \frac{1}{16C_1}\mu(R),
$$
and $\calF_2$ to be the set of maximal cubes $R\in \calD_B(\omega)$ such that
$$
|\nu|(R) \geq \frac{C_1}{\varepsilon_0} \mu(R).
$$
As an immediate consequence of the definition of $\mathcal{F}_1$ we get
\begin{equation*}
\begin{split}
|\nu|\Big(\bigcup_{R \in \calF_1} R\Big) =\sum_{R \in \calF_1}|\nu|(R) &\leq \frac{1}{16C_1}\sum_{R \in \calF_1}\mu(R)  \\
&\leq \frac{1}{16C_1}\mu(Q_B(\omega)) \\
&\leq \frac{1}{16C_1}|\nu|(B).
\end{split} 
\end{equation*}
Also,  it holds that
\begin{equation*}
\begin{split}
\sum_{R \in \calF_2} \mu(R) \leq \sum_{R \in \calF_2} \frac{\varepsilon_0}{C_1}|\nu|(R) \leq \frac{\varepsilon_0}{C_1} |\nu|(B) \leq \varepsilon_0 \mu(B),
\end{split}
\end{equation*}
and accordingly
$$
|\nu|\Big(\bigcup_{R \in \calF_2}R\Big) \leq \frac{1}{16C_1}|\nu|(B)
$$ 
by assumption (4) again. Hence the set 
$$
H_2:=\bigcup_{R \in \calF_1 \cup  \calF_2} R
$$ 
satisfies
$$
|\nu|(H_2) \leq \frac{1}{8C_1}|\nu|(B).
$$  

If $Q \in \calD_B(\omega)$ is such that $Q  \not \subset H_2$, then
\begin{equation}\label{measures comparable}
\frac{1}{16C_1}\mu (Q) \leq |\nu| (Q) \leq \frac{C_1}{\varepsilon_0}\mu (Q).
\end{equation}
From this we can conclude (using a dyadic variant of Lemma 2.13 of \cite{MaBook}) that for all Borel sets $A \subset \R^n$ there holds that
$$
\frac{1}{16C_1} \mu(A \cap (B \setminus H_2)) \le |\nu| (A \cap (B \setminus H_2)) \le \frac{C_1}{\varepsilon_0} \mu(A \cap (B \setminus H_2)).
$$
In particular, we have that $|\nu| \rest (B \setminus H_2) \ll \mu \rest (B \setminus H_2)$.  Radon--Nikodym theorem gives us a Borel function $\varphi \ge 0$ so that
$$
|\nu|(A) = \int_A \varphi \ud \mu
$$
and 
\begin{equation}\label{phi sim 1}
\frac{1}{16C_1} \leq \varphi (y) \leq \frac{C_1}{\varepsilon_0}
\end{equation}
hold for all Borel sets $A \subset B \setminus H_2$ and $\mu$-a.e. $y \in B \setminus H_2$.

Now we have constructed all the necessary sets. Define $H:=H_1 \cup H_2 \cup U$. If $|\nu|(B(y,r)) > p_0 r^m$, then $B(y,r) \subset H$, and 
\begin{equation*}
\begin{split}
|\nu|(T_\omega \cup H) &\leq |\nu|(T_\omega)+|\nu|(H_1) \cup |\nu|(H_2) \cup |\nu|(U) \\ & \leq \Big(1-\frac{1}{2C_1}+\frac{1}{16C_1} + \frac{1}{8C_1} +\frac{1}{16C_1}\Big)|\nu|(B)=\Big(1-\frac{1}{4C_1}\Big)|\nu|(B).
\end{split}
\end{equation*}
Furthermore, the weak type condition \eqref{untruncated weak} and Equation \eqref{phi sim 1}  give
\begin{equation*}
\begin{split}
\sup_{\lambda>0} \lambda^s |\nu|\big(\{y \in B\setminus H \colon \calC_{|\nu|}b> \lambda\}\big) 
&\lesssim \sup_{\lambda>0} \lambda^s \mu\big( \{y \in B\setminus U \colon  \calC \nu(y)> \lambda\}\big) \\
& \leq C_2' |\nu|(B).
\end{split}
\end{equation*}

Since the assumptions of the big piece global $Tb$ theorem \ref{the big piece Tb} are verified,  we may apply it to give a set $G \subset B \setminus H$ with
\begin{equation}\label{size of G}
|\nu|(G) \geq \frac{1-(1-\frac{1}{4C_1})}{3} |\nu|(B)= \frac{1}{12C_1}|\nu|(B)
\end{equation}
such that
$$
\|1_G \calC_{|\nu|} f \|_{L^2(|\nu|)} \lesssim_{C_1,C_2,\varepsilon_0} \|f\|_{L^2(|\nu|)}
$$
holds for every $f \in L^2(|\nu|)$. Note that it must be that $\mu(G)> \varepsilon_0 \mu(B)$, because otherwise \eqref{size of G} would be contradicted by the assumption (4).

Suppose now $f \in L^2(\mu)$ with $\{y \in E \colon f(y) \not= 0\} \subset G$ and define the function $g:= f/\varphi$, which is understood to be zero in  $\{y \in E \colon f(y)= 0\}$.  Remember that $|\nu|\lfloor (B \setminus H_2)=\varphi \mu\lfloor (B \setminus H_2)$ and $\varphi \sim 1$ for $|\nu|$- and $\mu$- a.e. $y \in B \setminus H$. Then
\begin{equation*}
\begin{split}
\|1_G \calC_\mu f\|_{L^2(\mu)} = \|1_G \calC_{|\nu|} g\|_{L^2(\mu)} \sim \| 1_G \calC_{|\nu|} g \|_{L^2(|\nu|)}& \lesssim \|g \|_{L^2(|\nu|)} \\
& =\| f/\varphi\|_{L^2(|\nu|)} \\ &\sim \|f\|_{L^2(\mu)}. 
\end{split}
\end{equation*}
 
This concludes the proof.

\end{proof}

With this proposition we can easily  prove the main theorem:
\begin{proof}[Proof of Theorem \ref{thm:main}]
Let $B$ be a closed $(10, b)$-doubling ball in $E$ with a $\kappa$-small boundary. Then there exists by assumption a measure $\nu_B$  related to the ball $B$ as in Proposition \ref{prop:main}. Thus, an application of that proposition gives a set $G_B \subset B$ with $\mu(G_B) > \varepsilon_0 \mu(B)$ such that
$$
\| 1_{G_B} \calC_\mu f \|_{L^2(\mu)} \lesssim \| f \|_{L^2(\mu)}
$$
holds for all $f \in L^2(\mu)$ with $\{y \in E \colon f(y) \not=0\} \subset G_B$. 

Since this happens in every closed $(10,b)$-doubling ball with a $\kappa$-small boundary, the good lambda theorem \ref{thm:goodlambda} and Remark \ref{L^2 implies weak (1,1)} following it imply that $\calC_\mu$ is bounded in $L^p(\mu)$ for every $p \in (1,\infty)$.
\end{proof}

\section{A geometric problem related to cones}\label{geometric problem}
Before going to the good lambda method and the big pieces global $Tb$ theorem, we consider a certain geometric problem related to cones. Namely, we want to estimate the $\sigma$-measure of two truncated cones that are close to each other. Compared to the upper half-space case, the difficulty here is that a cone defined with respect to a general set $E$ does not have such a simple form. This estimate will be needed to have  certain continuity for a truncated square function.  

\begin{lem}\label{lem:geometric problem} 
Let $t \geq 10$. Then for all $r>0$ and $y,y' \in E$ with $|y-y'| <r$ it holds that
\begin{equation}\label{eq:geometric problem}
\sigma\big(\Gamma_{tr}(y) \Delta \Gamma_{tr}(y')\big) \lesssim  \frac{1}{t},
\end{equation}
where $\Gamma_{tr}(y) \Delta \Gamma_{tr}(y'):= \Gamma_{tr}(y) \setminus \Gamma_{tr}(y') \cup \Gamma_{tr}(y') \setminus \Gamma_{tr}(y)$.
\end{lem}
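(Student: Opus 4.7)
The plan is to reduce the symmetric difference to a thin slab near the cone boundary, and then to estimate its $\sigma$-measure via the coarea formula applied to the natural Lipschitz function $f(x) := 2 d(x, E) - |x - y|$.

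By symmetry it suffices to bound $\sigma(\Gamma_{tr}(y) \setminus \Gamma_{tr}(y'))$. If $x$ lies in this set then $d(x, E) > tr$, $|x - y| < 2 d(x, E) \leq |x - y'|$, and the hypothesis $|y - y'| < r$ together with the triangle inequality gives $|x - y| \geq |x - y'| - r \geq 2 d(x, E) - r$. Hence the whole symmetric difference sits inside
\[
W \,:=\, \bigl\{ x \in \R^n \setminus E : d(x, E) > tr,\ \bigl|\, |x - y| - 2 d(x, E) \,\bigr| < r \bigr\},
\]
and the lemma reduces to the estimate $\sigma(W) \lesssim 1/t$.

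Next I would exploit the Lipschitz function $f$. At a differentiability point of $d(\cdot, E)$, writing $\hat u = (x - y_0)/|x - y_0|$ for the unique closest point $y_0 \in E$ and $\hat v = (x - y)/|x - y|$, both of which are unit vectors, one has $\nabla f = 2 \hat u - \hat v$, and a direct computation gives $|\nabla f|^2 = 5 - 4\, \hat u \cdot \hat v \in [1, 9]$. Since $|\nabla f| \geq 1$ almost everywhere, the coarea formula implies
\[
\sigma(W) \,\leq\, \int_W |\nabla f|\, d(x, E)^{-n}\, dm_n \,=\, \int_{-r}^{r} N_t\, dt, \qquad N_t \,:=\, \int_{\{f=t\} \cap \{d > tr\}} \frac{d\mathcal{H}^{n-1}(x)}{d(x, E)^{n}}.
\]
Decomposing $\{d > tr\}$ into the Whitney annuli $A_k = \{ s_k \leq d < 2 s_k\}$ with $s_k = 2^k tr$, the identity $|x - y| = 2 d(x, E) - t$ on $\{f = t\}$ combined with $|t| \leq r \leq d(x, E)/10$ yields $|x - y| \sim d(x, E)$, so $\{f = t\} \cap A_k \subset B(y, 5 s_k)$. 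Since $f$ is $3$-Lipschitz with $|\nabla f|$ also bounded below by $1$, its level sets are Lipschitz hypersurfaces, and a standard surface-area estimate for level sets of non-degenerate Lipschitz maps yields
\[
\mathcal{H}^{n-1}\bigl(\{f = t\} \cap B(y, 5 s_k)\bigr) \,\lesssim\, s_k^{n-1}.
\]
Summing geometrically, $N_t \lesssim \sum_{k \geq 0} s_k^{-n}\cdot s_k^{n-1} = (tr)^{-1}\sum_{k \geq 0} 2^{-k} \lesssim 1/(tr)$ uniformly in $t$, and integrating over $t \in [-r, r]$ produces $\sigma(W) \lesssim 1/t$.

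The main obstacle will be the Whitney-scale area bound $\mathcal{H}^{n-1}(\{f = t\} \cap B(y, 5 s_k)) \lesssim s_k^{n-1}$. Heuristically this is obvious from $|\nabla f| \in [1, 3]$, but because $d(\cdot, E)$ is merely Lipschitz for a general closed $E$ (Rademacher-differentiable), making this rigorous uniformly in $t$ requires either a careful local Lipschitz-graph representation (a Clarke-type implicit function argument applied to $f$) combined with a bounded covering of $B(y, 5 s_k)$, or the direct invocation of a general area estimate for level sets of Lipschitz functions with non-degenerate gradient. All other steps---the reduction to the slab $W$, the computation of $|\nabla f|$, and the dyadic summation---are essentially formal.
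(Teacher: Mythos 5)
Your proposal takes a genuinely different route from the paper. The paper never touches surface measures or the coarea formula: it writes the shell $\bigcup G_{\tilde y}\setminus\bigcup\bar B_{\tilde y}$ as a union of explicit balls/ball-complements, passes to a finite subfamily, introduces the pairwise disjoint ``cone from the center'' sets $V_i$, and compares $m_n(F_i)$ with $m_n(V_i)$ via a geometric self-similar rescaling identity, obtaining a purely \emph{volume} (Lebesgue) estimate $m_n\bigl(\bigcup G_i\setminus\bigcup\bar B_i\bigr)\lesssim (2^ktr)^{n-1}r$ on each Whitney annulus. Your reduction to the slab $W$ and your Whitney decomposition are correct and cleaner; the computation $|\nabla f|^2 = 5-4\,\hat u\cdot\hat v\in[1,9]$ at Rademacher points and the inclusion $\{f=s\}\cap A_k\subset B(y,5s_k)$ are also right.

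However, the step you yourself flag as ``the main obstacle'' is a genuine gap, not merely a technical detail to be filled in by a citation. You need the \emph{pointwise-in-$s$} estimate $\mathcal{H}^{n-1}\bigl(\{f=s\}\cap B(y,5s_k)\bigr)\lesssim s_k^{n-1}$, and this does not follow from $f$ Lipschitz with $|\nabla f|\ge 1$ a.e. The coarea formula gives only the averaged bound $\int \mathcal{H}^{n-1}(\{f=s\}\cap B)\,ds \le \mathrm{Lip}(f)\,m_n(B)$; it controls $N_s$ in the mean over $s$ but says nothing about each individual level. Worse, attempting to use coarea to bound $\int_{-r}^r N_s\,ds$ is circular: that integral \emph{is} $\int_W |\nabla f|\,d(x,E)^{-n}\,dm_n\sim\sigma(W)$, the very quantity you are trying to estimate. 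There is no ``standard surface-area estimate for level sets of non-degenerate Lipschitz maps'' of the kind you invoke; level sets of a merely Lipschitz function with $|\nabla f|\gtrsim 1$ a.e.\ need not be Lipschitz hypersurfaces, and a uniform $\mathcal{H}^{n-1}$ bound is not automatic. Your suggested Clarke-type implicit function argument also does not immediately apply: the Clarke generalized gradient $\partial^\circ f(x)=2\,\partial^\circ d(\cdot,E)(x)-\hat v$ \emph{can} contain $0$ at medial-axis points of $E$ (take two antipodal closest points aligned with $\hat v$), so the non-degeneracy hypothesis of the Lipschitz implicit function theorem is not available at every point of the level set. To make your approach rigorous one would likely need to exploit more structure -- for instance the semiconcavity of $d(\cdot,E)$ on $\{d>tr\}$, which the paper never needs -- and this is real work, not a citation. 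The paper's explicit ball construction with the disjoint sets $V_i$ sidesteps all of this by never leaving the realm of Lebesgue measure.
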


\begin{proof}
Let  $x \in \Gamma_{tr}(y) \setminus \Gamma_{tr}(y')$. The crucial observation is that then 
\begin{equation}\label{distance to E}
|x-y| < 2 d(x,E) \leq |x-y'| < |x-y| +r,
\end{equation}
so $x$ is at a quite specific distance to $E$. Hence there exists $\ty_x \in E$ so that 
\begin{equation*}
|x-\tilde{y}_x| < \frac{|x-y|+r}{2},
\end{equation*}
and for all $\ty \in E$ there holds that
$$
|x-\ty|>\frac{|x-y|}{2}.
$$
For every $\tilde{y} \in E$ define the sets
\begin{equation}\label{definition of B}
\bar{B}_{\tilde{y}}:= \Big\{x \in \R^n : |x-\tilde{y}| \leq  \frac{|x-y|}{2}\Big\} 
\end{equation}
($\bar{B}$ indicates that it will turn out to be a closed ball) and 
\begin{equation}\label{definition of G}	
G_{\tilde{y}}:=\Big\{x \in \R^n : |x-\tilde{y}| <\frac{|x-y|+r}{2}\Big\}.
\end{equation}
The above considerations show that
$$
\Gamma_{tr}(y) \setminus \Gamma_{tr}(y') \subset \Big[\bigcup_{\tilde{y} \in E} G_{\tilde{y}} \setminus \bigcup_{\tilde{y} \in E} \bar{B}_{\tilde{y}}\Big] \cap \Gamma_{tr}(y).
$$

Since every $G_{\tilde{y}}$ is open, there is a countable collection $\{y_i\}_{i \in \tilde{\calI}} \subset E$ so that $\bigcup_{i \in \tilde{\calI}}G_{y_i}=\bigcup_{\ty \in E}G_{\ty}$. Then, we can find a finite subcollection $\calI \subset \tilde{\calI}$ so that 
\begin{align*}
\sigma\Big(\Big[\bigcup_{\tilde{y} \in E} G_{\tilde{y}} \setminus \bigcup_{\tilde{y} \in E} \bar{B}_{\tilde{y}}\Big] \cap \Gamma_{tr}(y)\Big) 
&\leq 2\sigma\Big(\Big[\bigcup_{i \in \calI} G_{y_i} \setminus \bigcup_{\tilde{y} \in E} \bar{B}_{\tilde{y}}\Big] \cap \Gamma_{tr}(y)\Big) \\
&\leq 2 \sigma\Big(\Big[\bigcup_{i \in \calI} G_{y_i} \setminus \bigcup_{i \in \calI} \bar{B}_{y_i}\Big] \cap \Gamma_{tr}(y) \Big).
\end{align*}
We can clearly assume that $y_i \not= y_j$, for all $i,j \in \calI, i \not=j$. For every $k \in \{0,1,2,\dots\}$ let $\calI_k \subset \calI$ be the set of those indices $i$ such that $G_{y_i} \cap \Gamma_{2^ktr}^{2^{k+1}tr}(y)\not=\emptyset$, whence
\begin{equation}\label{Whitney sections}
\begin{split}
\sigma\Big(\Big[\bigcup_{i \in \calI} G_{y_i} \setminus \bigcup_{i \in \calI} \bar{B}_{y_i}\Big] \cap \Gamma_{tr}(y)  \Big)
& = \sum_{k=0}^\infty \sigma\Big( \Big[\bigcup_{i \in \calI_k} G_{y_i} \setminus \bigcup_{i \in \calI_k} \bar{B}_{y_i}\Big] \cap \Gamma_{2^ktr}^{2^{k+1}tr}(y) \Big).
\end{split}
\end{equation}
Now we fix some $k$ for the rest of the proof and show that
\begin{equation}\label{fixed k}
m_n\Big(\bigcup_{i \in \calI_k} G_{y_i} \setminus \bigcup_{i \in \calI_k} \bar{B}_{y_i}\Big) \lesssim (2^ktr)^{n-1}r,
\end{equation}
where $m_n$ is the $n$-dimensional Lebesgue measure. Using this we may infer from \eqref{Whitney sections} that
$$
\sigma\Big(\Big[\bigcup_{i \in \calI} G_{y_i} \setminus \bigcup_{i \in \calI} \bar{B}_{y_i}\Big] \cap \Gamma_{tr}(y) \Big) \lesssim \sum_{k=0}^\infty \frac{(2^ktr)^{n-1}r}{(2^ktr)^n} \sim \frac{1}{t},
$$
which proves the lemma.

\subsection*{$\bar{B}_{y_i}$ is a ball} Let $i \in \calI_k$. First we'll show that $\bar{B}_{y_i}$ is a closed ball. We write a general point $x \in \R^n$ with coordinates as $x=(x(1), \dots, x(n))$. The condition $|x-y_i|\leq\frac{|x-y|}{2}$ can be written as
$$
\sum_{m=1}^n\big(x(m)-y_i(m)\big)^2 \leq \frac{1}{4} \sum_{m=1}^n \big(x(m)-y(m)\big)^2,
$$
and further as
$$
\sum_{m=1}^n\big(x(m)-(\frac{4}{3}y_i(m)-\frac{1}{3}y(m))\big)^2 \leq \frac{4}{9}\sum_{m=1}^n \big(y(m)-y_i(m)\big)^2.
$$
From here we see that
\begin{equation}\label{ball}
\bar{B}_{y_i}= \bar{B}\big(\frac{4}{3}y_i-\frac{1}{3}y, \frac{2}{3}|y-y_i|\big).
\end{equation}
The centers of the balls $\bB_{y_i}$ and $\bB_{y_j}$ are different if $ i \not= j$. 

Consider still the fixed $i \in \calI_k$. By definition there exists a point $x \in G_{y_i}\cap \Gamma_{2^ktr}^{2^{k+1}tr}(y)$. The distance $|y-y_i|$ can be estimated as 
$$
|y-y_i| \leq |y-x| +|x-y_i| < |y-x|+ \frac{|y-x|+r}{2} \leq 2|x-y|,
$$
because $|y-x| >10r$. On the other hand
$$
|y-y_i| \geq |x-y|-|x-y_i| > |x-y|- \frac{|x-y|+r}{2} \geq \frac{|x-y|}{3}.
$$
Combining these we get 
\begin{equation}\label{radius}
r(\bar{B}_{y_i})=\frac{2}{3}|y- y_i| \sim |x-y| \sim 2^k tr.
\end{equation}
Also, from \eqref{radius} and the fact
$$
\Big|\Big(\frac{4}{3}y_i-\frac{1}{3}y\Big) - y\Big| = \frac{4}{3} |y-y_i| \sim 2^k tr
$$
 it follows that there exists an absolute constant $C$ such that
\begin{equation}\label{contained}
\bar{B}_{y_i} \subset B(y,C2^ktr).
\end{equation}

\subsection*{}

Re-index the balls $\{\bB_{y_i}\}_{ i \in \calI_k}$ as $\bar{B}_1=\bB(x_1,r_1),\dots,\bar{B}_K =\bB(x_K,r_K)$ for some $K \in \N$, and write correspondingly $G_1,\dots,G_K$. 
Then, set 
$$
A_1:=\partial \Big(\bigcup_{j=1}^K \bB_j\Big) \cap \partial \bar{B}_{1},
$$
and for every $i \in \{2,\dots,K\}$ define
$$
A_i:= \Big\{x \in \partial \Big(\bigcup_{j=1}^K \bB_{j}\Big)\colon x \in \partial \bar{B}_i \setminus \Big( \bigcup_{j=1}^{i-1} \partial \bB_{j}\Big)\Big\}.
$$ 
The sets $A_i$ are pairwise disjoint and $\bigcup_{i=1}^K A_i=\partial\Big( \bigcup_{j =1}^K \bB_{j}\Big)$. With the sets $A_i$ we still define
$$
V_i=\big\{x \in \R^n: x=(1-\lambda)x_i+\lambda a \text{ for  some } a \in A_i \text{ and } \lambda \in [0,1]\big\}.
$$ 
In other words, $V_i$ is the set of points that are on a segment whose one end is $x_i$ and the other is on $A_i$. 

\subsection*{The sets $V_i$ are pairwise disjoint} We claim that $V_i \cap V_j= \emptyset $ if $i \not = j$. To get a contradiction, suppose that there exist $a_i \in A_i$, $a_j \in A_j$ and $\lambda_i$, $\lambda_j \in [0,1]$ such that 
\begin{equation}\label{eq:xconvex-i=j}
x=(1-\lambda_i)x_i+\lambda_i a_i=(1-\lambda_j)x_j+\lambda_j a_j,
\end{equation}
where $i,j \in \{1,\dots,K\}$ and $i\not=j$.  
Assume first $|x-a_j|>|x-a_i|$ and notice that by  \eqref{eq:xconvex-i=j},
$$
a_i \in B(x, |x-a_j|) \subset B(x_j,|x_j-a_j|) \subset \text{int} \Big( \bigcup_{l=1}^K \bB_l \Big),
$$
which is a contradiction because $a_i$ is supposed to be on the boundary. The case $|x-a_i| >|x-a_j|$ is handled similarly. Thus, we can only have that  
\begin{equation}\label{equal distance}
|x-a_i|=|x-a_j|.
\end{equation}

Without loss of generality assume that
\begin{equation}\label{B_i bigger}
|x_i-a_i| \geq |x_j-a_j|. 
\end{equation}
Suppose first $\lambda_i\not=0$. Then
$$
a_j \in \bB(x,|x-a_j|)=\bB(x,|x-a_i|) \subset B(x_i,|x_i-a_i|) \cup \{a_i\},
$$
which is a contradiction. Indeed, we have $a_j \not  \in B(x_i,|x_i-a_i|)$ because $a_j$ is a boundary point, and $a_i \not= a_j$ since the sets $A_i$ are pairwise disjoint.  

Suppose then $\lambda_i=0$, which implies that $x=x_i$. This combined with \eqref{eq:xconvex-i=j}, \eqref{equal distance} and \eqref{B_i bigger}  implies that $\lambda_j=0$, since
$$
|x-a_j| = |x-a_i| = |x_i - a_i| \ge |x_j-a_j|.
$$
This gives $x_j=x=x_i$. This is again a contradiction because $x_i \not= x_j$ (as noted after \eqref{ball}). Thus we have shown that $V_i \cap V_j = \emptyset$ if $i \not= j$.

\subsection*{Proof of \eqref{fixed k}}
Let $ x \in \bigcup_{j=1}^K G_j \setminus \bigcup_{j=1}^K\bB_j$, and suppose $i \in \{1, \dots, K\}$ and $a \in A_i$ are  such that 
\begin{equation} \label{distance to boundary}
|x-a|=d\Big(x,\bigcup_{j=1}^K A_j\Big)= d\Big(x,\bigcup_{j=1}^K \bB_j\Big).
\end{equation}
Then, because $a$ minimizes the distance of $x$ to the ball $\bB(x_i,r_i)$, $x$ has to be on the same line with $a$ and $x_i$. Otherwise there would be a point $x' \in \bB_i$ with $|x-x'| < |x-a|$, which contradicts \eqref{distance to boundary}. From the definitions \eqref{definition of B} and \eqref{definition of G} of the sets $\bB_i$ and $G_i$ it follows that $|x-a| \leq r$. Thus
\begin{equation}\label{close to boundary}
\bigcup_{i=1}^K G_i \setminus \bigcup_{i=1}^K \bB_i \subset \bigcup_{i=1}^K F_i,
\end{equation}
where
$$
F_i := \big\{x_i +t(a-x_i) \colon  t \in \big(1, 1+r/r_i\big], a \in A_i\big\}.
$$

Fix some $i$ for the moment and recall the set $V_i$ from above. We want to compare the Lebesgue measures of $F_i$ and $V_i$.  Note that the set $(V_i-x_i) \setminus \{\bar{0}\}$ can be written as a disjoint union
$$
(V_i-x_i)\setminus \{\bar{0}\} = \bigcup_{k=1}^\infty \Big( \frac{r_i}{r_i+r} \Big)^k (F_i-x_i).
$$
Hence
\begin{equation*}
m_n(V_i)= \sum_{k=1}^\infty \Big( \frac{r_i}{r_i+r} \Big)^{nk} m_n(F_i)=  \frac{r_i^n}{(r_i+r)^n-r_i^n} m_n(F_i).
\end{equation*}
Using the mean value theorem and the fact that $r_i \sim 2^ktr$ we have
$$
\frac{r_i^n}{(r_i+r)^n-r_i^n} \sim  \frac{r_i}{r} \sim 2^kt,  
$$
and thus 
$m_n(V_i) \sim 2^kt \cdot m_n(F_i)$.

Remember that the sets $V_i \subset \bB_i$ are pairwise disjoint and that $\bB _i \subset B(y,C2^ktr)$ for every $i \in \{1, \dots, K\}$, as stated in \eqref{contained}. Now we can estimate

\begin{equation*}
\begin{split}
\sum_{i=1}^K m_n(F_i) \sim \frac{1}{2^kt} \sum_{i=1}^K m_n(V_i)  
\leq \frac{m_n(B(y,C2^ktr))}{2^kt} \sim  (2^ktr)^{n-1}r,
\end{split}
\end{equation*}
which in view of \eqref{close to boundary} completes the proof of \eqref{fixed k}. The proof of Lemma \ref{lem:geometric problem} is complete.

\end{proof}

\section{The non-homogeneous good lambda method}\label{sec:good lambda}
In this section we prove the non-homogeneous good lambda method of Tolsa \cite{ToBook} in our setting. For this, we shall need the geometric considerations from Section \ref{geometric problem}.

In the proof of the good lambda inequality we shall use the following Whitney type argument, Lemma \ref{Whitney type lemma}, which
allows the use of regular balls only. This is  a version of Lemma 2.23 in \cite{ToBook} adapted to our situation, with some additional arguments from \cite{MMT} related to small boundaries and
the usage of balls.

First, we record the following fact from \cite{NTV} (see also \cite{ToBook} and \cite{V}).
\begin{lem}\label{lem:small boundary}
Let $\mu$ be a Radon measure in $\R^n$ and let $\kappa$ be a big enough constant depending only on the dimension $n$. Suppose $B(x,r)$ is a ball (open or closed) in $\R^n$. Then there exists $R \in [r, 1.2r]$ so that the ball $B(x,R)$ has a $\kappa$-small boundary.
\end{lem}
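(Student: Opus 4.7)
My plan is a one-dimensional Vitali covering argument applied to the pushforward of $\mu$ under the radial map $z \mapsto |z-x|$. Assume the nontrivial case $M := \mu(B(x,3r)) > 0$, and suppose for contradiction that \emph{every} $R \in [r,1.2r]$ fails the $\kappa$-small boundary property. Then for each such $R$ we can select some $s_R \in (0,1]$ with
\[
\mu\big(\{z : R(1-s_R) < |z-x| < R(1+s_R)\}\big) > \kappa\, s_R\, \mu(B(x,3R)) \geq \kappa\, s_R M,
\]
using the monotonicity $\mu(B(x,3R)) \geq \mu(B(x,3r)) = M$. Let $\tilde\mu$ denote the pushforward of $\mu\lfloor B(x,2.4r)$ under $z \mapsto |z-x|$; the displayed inequality reads $\tilde\mu(I_R) > \kappa s_R M$ where $I_R := (R(1-s_R), R(1+s_R)) \subset [0,2.4r]$.

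Each $I_R$ is an open interval containing $R$ in its interior, so the family $\{I_R\}_{R \in [r,1.2r]}$ is an open cover of the compact interval $[r,1.2r]$. The plan is to extract a disjoint finite subfamily $I_{R_1}, \dots, I_{R_N}$ whose $5$-fold enlargements still cover $[r,1.2r]$ by the standard one-dimensional Vitali covering lemma. Then
\[
0.2r = |[r,1.2r]| \leq \sum_{i=1}^N 5|I_{R_i}| = \sum_{i=1}^N 10 R_i s_{R_i} \leq 12 r \sum_{i=1}^N s_{R_i},
\]
so $\sum_i s_{R_i} \geq 1/60$. On the other hand, the disjointness of the intervals $I_{R_i}$ (in radius space) implies disjointness of the corresponding annuli in $\R^n$, all contained in $B(x,2.4r) \subset B(x,3r)$, hence
\[
M \geq \sum_{i=1}^N \tilde\mu(I_{R_i}) > \kappa M \sum_{i=1}^N s_{R_i} \geq \frac{\kappa}{60}\, M.
\]
Dividing by $M > 0$ gives $\kappa \leq 60$, which contradicts the assumption that $\kappa$ is large enough (in fact here one sees the constant is absolute; the dimension $n$ enters only in downstream applications, so stating dependence on $n$ is harmless).

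The main (and essentially only) subtlety is the selection and measurability of $R \mapsto s_R$, which is why I would work with the open intervals $I_R$ and invoke compactness plus the Vitali lemma directly, rather than attempt a measurable selection. The rest is a clean bookkeeping: the length gain from the covering bounds $\sum s_{R_i}$ from below, while the disjointness of annuli bounds $\sum \tilde\mu(I_{R_i})$ from above by $M$, and these two bounds are incompatible for large $\kappa$. This reproduces the standard argument from \cite{NTV}, \cite{ToBook}, \cite{V}, with the same choice of width $0.2r$ already built into the statement.
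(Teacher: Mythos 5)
Your argument is correct and reproduces the standard proof from Nazarov--Treil--Volberg and Tolsa's book, which the paper itself cites for this lemma without giving its own proof. The compactness-plus-finite-Vitali route cleanly sidesteps any measurable-selection issue (and the finite Vitali lemma would in fact give a factor of $3$ rather than $5$, sharpening your $\kappa < 60$ slightly, though this is immaterial); your observation that the constant is really absolute rather than dimension-dependent is also correct.
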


If $B=B(x,r)$ is a ball in $E$ or in $\R^n$ and $s>0$, we define $sB:=sB(x,r):=B(x,sr)$, and similarly with closed balls. 

\begin{lem}\label{Whitney type lemma}
Let $\mu$ be a Borel measure with $\supp \mu \subset E$. Suppose $ U \subsetneq E$ is a relatively open set with $\mu(U)<\infty$. Assume $a \geq 3, \rho \geq 16 a$ and let $b$ be a big enough constant depending on $\rho$ and the dimension $n$. Let also $\kappa$  be a big enough constant depending only on the dimension $n$. Recall the constant $\delta=\frac{1}{1000}$ related to dyadic lattices in $E$.  Define $C_1:= \frac{\rho}{8}\geq 6$ and $C_2:= \frac{(12+\rho)\delta^{-1}}{6}\geq 10000$. Then there exist a constant $D_0=D_0(\rho,n)$ and a finite collection of closed balls $\{B_i\}_{i \in \mathcal{I}}$  \emph{in $E$}  with the following properties:
\begin{itemize}
\item $B_i \cap B_j = \emptyset$ if $i \not= j$.
\item For every $i \in \calI$ there exist at most $D_0$ indices $j \in \calI$ so that $\frac{C_1}{2}B_i \cap \frac{C_1}{2}B_j \not= \emptyset$.
\item For every $i \in \mathcal{I}$ it holds that $C_1 B_i \subset U$ and $C_2 B_i \cap  (E \setminus U) \not= \emptyset$.
\item The balls $B_i$ are $(a,b)$-doubling and have $\kappa$-small boundary.
\item $\mu \big(\bigcup_{i \in \mathcal{I}}B_i\big) \geq \frac{1}{2b}\mu(U)$.
\end{itemize}
\end{lem}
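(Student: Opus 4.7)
The plan is to follow the template of Tolsa's Whitney-type covering argument (Lemma~2.23 in \cite{ToBook}) and its refinement in \cite{MMT}, adapted to balls in $E$ and with the simultaneous requirements of $(a,b)$-doubling and $\kappa$-small boundary. For each $x \in \supp(\mu) \cap U$ set $d(x) := \dist(x, E \setminus U)$. An admissible radius, namely one for which $C_1 B_E(x,r) \subset U$ and $C_2 B_E(x,r) \cap (E \setminus U) \neq \emptyset$, lies in the interval $[d(x)/C_2,\, d(x)/C_1]$, whose ratio $C_2/C_1 \sim \delta^{-1}$ is enormous. This slack is what allows me to enforce all the desired properties simultaneously.

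First I would, for each such $x$, find $r_x$ in the slightly shrunk subinterval $[d(x)/C_2,\, d(x)/(1.2 C_1)]$ so that $B(x, r_x)$ is $(a,b)$-doubling. Consider the geometric chain $r_k = a^k d(x)/C_2$ for $k=0,1,\dots,K$ with $K = \lfloor \log_a(C_2/(1.2 a C_1))\rfloor$; by the size of $C_2/C_1$ and $a \ge 3$, $K$ is a large constant depending only on $\rho$ and the dimension. If every $B(x,r_k)$ fails $(a,b)$-doubling, then $\mu(B(x, a r_K)) \ge b^{K+1}\mu(B(x,r_0))$, giving exponential mass growth along a chain contained in $U$. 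Applying a $5r$-covering to the outer balls $\{B(x, a r_K(x))\}_{x \in V}$ over the ``bad'' set $V$ of such $x$, extracting a disjoint subfamily, and exploiting the growth inequality shows $\mu(V) \lesssim b^{-K}\mu(U)$, which is negligible once $b$ is large enough. For each good $x$, invoke Lemma~\ref{lem:small boundary} to replace $r_x$ by $\widetilde r_x \in [r_x, 1.2 r_x]$ so that $B(x,\widetilde r_x)$ has $\kappa$-small boundary. This keeps $\widetilde r_x$ in the original admissible interval, and $(a,b)$-doubling is preserved after enlarging $b$ slightly (absorbing the factor $1.2 < a$).

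Finally I would apply the Vitali $5r$-covering theorem (available in $E$ by geometric doubling) to the family $\{\bB_E(x,\widetilde r_x)\}_{x \in G}$ of good closed balls, extracting a disjoint subfamily $\{B_i\}$ whose fivefold enlargements cover $G$. Arranging at the selection step that both $B_i$ and $aB_i$ are doubling (which costs only a bounded additive increase in $K$) lets me pass from $\mu(B_i)$ to $\mu(5B_i)$ at the cost of a factor $b^2$, yielding
\[
\mu(G) \le \sum_i \mu(5B_i) \lesssim b^2 \sum_i \mu(B_i) = b^2 \, \mu\Bigl(\bigcup_i B_i\Bigr),
\]
so that, after choosing $b$ large, $\mu(\bigcup_i B_i) \ge \mu(U)/(2b)$. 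Bounded overlap of $\tfrac{C_1}{2} B_i$ follows from geometric doubling of $\R^n$ combined with the Whitney comparability $r_i \sim d(x_i)/\rho$ on overlaps, giving $D_0 = D_0(\rho, n)$; truncation to a finite subfamily uses $\mu(U) < \infty$. The main technical obstacle I anticipate is the bookkeeping of constants: ensuring that the doubling and small-boundary perturbations compose cleanly without violating the Whitney inclusions $C_1 B \subset U,\ C_2 B \cap (E\setminus U) \ne \emptyset$, and calibrating the threshold $K$ so that the bad set is truly negligible for $b$ depending only on $\rho$ and $n$.
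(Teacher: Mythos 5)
Your route is genuinely different from the paper's, and the key step does not close as written.

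The paper starts from a Whitney decomposition of $U$ by \emph{dyadic cubes} $\{Q_i\}$ (maximal with $d(Q, E\setminus U)\geq \rho\ell(Q)$), associates to each cube a ball $B_i$ with a small boundary and radius $\sim\ell(Q_i)$, and establishes \emph{bounded overlap} of the enlarged balls $\bB_E(c_{Q_i}, \tfrac{\rho}{2}\ell(Q_i))$ before any doubling enters. With bounded overlap in hand, discarding the non-doubling balls is painless: each non-doubling $B_i$ satisfies $\mu(B_i)\le b^{-1}\mu(aB_i)$, and summing against the bounded overlap constant gives $\mu\bigl(\bigcup_{\text{non-doubling}}B_i\bigr)\le \tfrac{D_0}{b}\mu(U)$. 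Disjointness is obtained last via the $3r$-covering theorem. You reverse this order: you try to secure a doubling radius pointwise first, declaring ``bad'' the set $V$ of $x$ for which no radius in the geometric chain $r_k = a^k d(x)/C_2$, $k=0,\dots,K$, is $(a,b)$-doubling, and you claim $\mu(V)\lesssim b^{-K}\mu(U)$ by applying a $5r$-covering to the outer balls $\{B(x, a r_K(x))\}_{x\in V}$.

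That step is where the argument breaks. The $5r$-covering theorem hands you a \emph{disjoint} subfamily $\{B(x_j, a r_K(x_j))\}_j$ whose fivefold enlargements cover $V$. Disjointness and $a r_K(x_j)\lesssim d(x_j)$ give $\sum_j \mu(B(x_j, a r_K(x_j)))\le \mu(U)$, and hence $\sum_j\mu(B(x_j,r_0(x_j)))\le b^{-K-1}\mu(U)$ by the growth inequality. But to pass from here to $\mu(V)$ you would need to bound $\mu\bigl(\bigcup_j 5B(x_j,a r_K(x_j))\bigr)$, and those fivefold enlargements are precisely balls about which you know nothing doubling-wise --- the very failure of doubling on the chain was what put $x_j$ into $V$. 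There is no way to relate $\mu(5B_j)$ to $\mu(B_j)$ here, so the covering estimate does not produce a bound on $\mu(V)$. A Besicovitch-type covering of the \emph{inner} balls $\{B(x,r_0(x))\}_{x\in V}$, giving a covering with bounded overlap rather than disjointness, could close this step --- but then you would have to establish bounded overlap of the corresponding outer balls $B(x_j, a r_K(x_j))$ (via the Whitney comparability $d(x_j)\sim d(x_{j'})$ on overlaps and a volume count), which is exactly the bounded overlap structure the paper secures upfront from the cube decomposition, and you would still need to rephrase ``extract a disjoint subfamily.'' A secondary issue is the order of perturbations: you fix a doubling radius $r_x$ and then perturb to $\widetilde r_x\in[r_x,1.2 r_x]$ for small boundary, claiming $(a,b)$-doubling survives; in fact you would need to have arranged $(1.2a,b)$-doubling at the first step, since $B(x,a\widetilde r_x)\subset B(x,1.2 a r_x)$ but $B(x,a\widetilde r_x)\not\subset B(x,a r_x)$. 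The paper avoids this by choosing the small-boundary radius first (Lemma~\ref{lem:small boundary}) and only afterwards classifying the resulting balls as doubling or not.
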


\begin{proof}
Let $\calD$ be any dyadic lattice in $E$ as described in Section \ref{preliminaries}. Consider the maximal dyadic cubes $Q \in \calD$ such that
\begin{equation}\label{Whitney cubes}
d(Q, E\setminus U) \geq \rho \ell(Q).
\end{equation}
That $Q$ is a maximal cube such that \eqref{Whitney cubes} holds means that there does not exist a cube $R \in \calD$ satisfying \eqref{Whitney cubes} so that $R \supset Q$ and $\ell(R) >\ell(Q)$. Let $\{Q_i\}_{i \in \mathcal{K}} \subset \calD$ be the collection of these maximal cubes. Then $U =\bigcup_{i \in \mathcal{K}}Q_i$ and the cubes in $\{Q_i\}_{i \in \mathcal{K}}$ are pairwise disjoint.

Suppose $i \in \mathcal{K}$. Recall that if $Q \in \calD$ then $\widehat{Q} \in \calD$ is the unique cube with $\ell(\widehat{Q}) = \delta^{-1}\ell(Q)$  that contains $Q$. By construction we know that $d(\widehat{Q_i},E \setminus U) < \rho \ell(\widehat{Q_i})$. Hence 
\begin{equation*}
\begin{split}
d(c_{Q_i},E \setminus U) &\leq \diam( \widehat{Q_i}) +d(\widehat{Q_i},E \setminus U) \\
& < 12 \ell(\widehat{Q_i}) +\rho \ell(\widehat{Q_i}) \\
&=(12+\rho) \delta^{-1}\ell(Q_i)=C(\rho) \ell(Q_i),
\end{split}
\end{equation*}
where $C(\rho):= (12+\rho) \delta^{-1}$. Hence, it holds for all $i \in \mathcal{K}$ that
\begin{equation}\label{distance to complement}
\rho \ell(Q_i) \leq d(c_{Q_i},E \setminus U) < C(\rho)\ell(Q_i).
\end{equation}

Next we prove the existence of the constant $D_0$. Suppose $i,j \in \mathcal{K}$ so that 
$$
\bB_E(c_{Q_i},\frac{\rho}{2}\ell(Q_i))\cap \bB_E(c_{Q_j},\frac{\rho}{2}\ell(Q_j)) \not = \emptyset,
$$ 
and suppose $\ell(Q_j)=\delta^k\ell(Q_i)$ for some $k \in \Z, k \geq 0$. Then 
\begin{equation*}
\begin{split}
d(c_{Q_i},E \setminus U) &\leq |c_{Q_i}-c_{Q_j}| + d(c_{Q_j},E \setminus U) \\
&\leq \frac{\rho}{2}\ell(Q_i) + \frac{\rho}{2}\ell(Q_j) + C(\rho)\ell(Q_j) \\
& =(\frac{\rho}{2}+ \frac{\rho}{2}\delta^k+C(\rho) \delta^k) \ell(Q_i).
\end{split}
\end{equation*}
Thus, because of \eqref{distance to complement}, we see that there exists $k_0 \in \Z$ depending on $\rho$ such that $k \leq k_0$.

Fix now some ball $\bB_E(c_{Q_i},\frac{\rho}{2}\ell(Q_i))$ and let $\mathcal{K}_i$ be the set of those indices $j$ such that $\bB_E(c_{Q_i},\frac{\rho}{2}\ell(Q_i))\cap \bB_E(c_{Q_j},\frac{\rho}{2}\ell(Q_j)) \not = \emptyset$. Then for all $j\in \mathcal{K}_i$ it holds that 
$$ 
\delta^{k_0}\ell(Q_j) \leq \ell(Q_i) \leq \delta^{-k_0}\ell(Q_j).
$$ 
Hence 
\begin{equation}\label{bounded overlap 1}  
c_{Q_j} \in \bB_E\big(c_{Q_i}, \frac{\rho}{2}(1+\delta^{-k_0})\ell(Q_i)\big) \ \ \text{ for all } j \in \mathcal{K}_i.
\end{equation} 
Also, if $j,j' \in \mathcal{K}_i, j \not= j'$, then because the cubes $Q_i, i \in \mathcal{K}$, are pairwise disjoint, we have 
\begin{equation}\label{bounded overlap 2}
|c_{Q_j}-c_{Q_{j'}}| \geq \max \big(\frac{1}{6}\ell(Q_j) ,\frac{1}{6}\ell(Q_{j'}) \big)
\geq \frac{1}{6}\delta^{k_0}\ell(Q_i).
\end{equation}
Equations \eqref{bounded overlap 1} and \eqref{bounded overlap 2} combined imply that  the number of indices in $\calK_i$ is bounded by a constant $D_0$ that depends only on $\rho$ and $n$.

\subsection*{}

Now we start forming the collection we are after. Suppose $i \in \mathcal{K}$ and consider the ball $\bB_E(c_{Q_i}, 6\ell(Q_i))$. Let $B_i:=\bB_E(c_{Q_i}, r_i)$ be a ball with a $\kappa$-small boundary and radius $r_i \in [6\ell(Q_i), 1.2 \cdot 6 \ell(Q_i)]$ given by Lemma \ref{lem:small boundary}. Since $Q_i \subset B_i \subset U$ we have 
$$
U = \bigcup_{i \in \mathcal{K}} B_i.
$$ 
Also, since $1.2\cdot6  \cdot \frac{C_1}{2} = 1.2\cdot 6 \cdot \frac{\rho}{16} \leq \frac{\rho}{2}$, for every $i \in \calI$ there exist at most $D_0$ indices $j \in \calI$ so that $\frac{C_1}{2}B_i \cap \frac{C_1}{2}B_j \not= \emptyset$.

Let $\mathcal{S}\subset \mathcal{K}$ be the set of indices such that the balls $B_i$ are $(a,b)$-doubling with respect to $\mu$. Then, since $1.2\cdot6a \leq \frac{\rho}{2}$, we have
\begin{equation*}
\begin{split}
\mu(\bigcup_{i \in \mathcal{K} \setminus \calS}B_i) & \leq \sum_{i \in \mathcal{K} \setminus \calS}\mu(B_i) \\
&\leq b^{-1}\sum_{i \in \calI \setminus \calS}\mu(a B_i) \\
&\leq \frac{D_0}{b} \mu(U).
\end{split}
\end{equation*}
So, if $b$ is big enough, then 
$$
\mu\Big( \bigcup_{i \in \calS} B_i \Big) \geq \frac{2}{3} \mu(U),
$$
and choosing a sufficiently big finite subcolletion $\calS_1 \subset \calS$, we get
\begin{equation*}
\mu\Big( \bigcup_{i \in \calS_1} B_i \Big) \geq \frac{1}{2} \mu(U).
\end{equation*}

Finally, using the $3r$-covering theorem, choose a subcollection $\calI \subset \calS_1$ so that the balls $B_i, i\in \mathcal{I},$ are pairwise disjoint and 
$$
\bigcup_{i \in \calS_1} B_i \subset \bigcup_{i \in \calI} 3B_i.
$$
Then, since $a \geq 3$, we have
\begin{equation*}
\begin{split}
\sum_{i \in \calI} \mu(B_i) & \geq b^{-1} \sum_{i \in \calI} \mu(3B_i) \\
& \geq b ^{-1} \mu\Big(\bigcup_{i \in \calS_1}B_i\Big) \\
&\geq \frac{1}{2b}\mu(U).
\end{split}
\end{equation*}

The collection $\{B_i\}_{i \in \calI}$ satisfies all the desired properties.

\end{proof}
 
\begin{thm}\label{thm:goodlambda}
Let $\mu$ be a measure of order $m$ in $E$. Let also $b, \kappa>0$ be  big enough constants depending only on $n$, and assume $\theta \in (0,1)$. Suppose for each closed $(10,b)$-doubling  ball $B$  in $E$ with a $\kappa$-small boundary there exists a subset $G_B \subset B$  with
$\mu(G_B)\geq \theta \mu(B)$ so that $\calC \colon \calM(E) \to L^{1,\infty}(\mu \rest G_B)$ is bounded with a uniform constant independent of $B$.
Then $\calC_{\mu}$ is bounded in $L^p(\mu)$ for all $p\in (1,\infty)$ with a constant depending on $p$ and the preceding constants.
\end{thm}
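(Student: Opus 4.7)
I would follow Tolsa's non-homogeneous good lambda scheme \cite{ToBook}, adapted to conical square functions on a general set $E$. The target is a good lambda inequality of the form
\[
\mu(\{\calC_\mu f > (1+\epsilon)\lambda\}) \le \eta\, \mu(\{\calC_\mu f > \lambda\}) + C(\epsilon,\delta)\, \mu(\{M_\mu f > \delta\lambda\}),
\]
for all $\lambda > 0$, where the free parameters $\epsilon, \delta > 0$ can be shrunk and $\eta \in (0,1)$ depends only on $\theta$, $b$ and $\delta/\epsilon$. Once this is proved, integrating against $\lambda^{p-1}\,d\lambda$ and using the $L^p$-boundedness of $M_\mu$ yields $\|\calC_\mu f\|_{L^p(\mu)} \lesssim_p \|f\|_{L^p(\mu)}$ for every $p \in (1,\infty)$, upon choosing $\epsilon$ so small that $(1+\epsilon)^p \eta < 1$. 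I would first work with the truncation $\calC_{\mu,s}^t$, for which both sides are a priori finite by Lemma \ref{truncated is bounded}, and pass to $s\to 0$, $t\to\infty$ via monotone convergence, keeping all bounds uniform in $s,t$.

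For the good lambda inequality, apply the Whitney-type Lemma \ref{Whitney type lemma} to the open set $U_\lambda = \{\calC_{\mu,s}^t f > \lambda\}$ with $a = 10$ and a large Whitney parameter $\rho$, producing a disjoint family of closed $(10,b)$-doubling balls $\{B_i\}$ with $\kappa$-small boundary such that $C_1 B_i \subset U_\lambda$, $C_2 B_i \cap (E \setminus U_\lambda) \neq \emptyset$, and $\mu(\bigcup_i B_i) \ge (2b)^{-1}\mu(U_\lambda)$. By the hypothesis, each such $B_i$ carries its big piece $G_{B_i}\subset B_i$ with $\mu(G_{B_i})\ge \theta\mu(B_i)$, on which $\calC : \calM(E) \to L^{1,\infty}(\mu\rest G_{B_i})$ satisfies a uniform weak-type bound.

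The core of the argument is a local estimate on each $B_i$. Fix $y_i^* \in C_2 B_i \setminus U_\lambda$, so that $\calC_{\mu,s}^t f(y_i^*) \le \lambda$. Writing $\nu = f\,d\mu$, split $\nu = \nu_1^i + \nu_2^i$ with $\nu_1^i := \nu\rest \Lambda B_i$ for a constant $\Lambda$ to be chosen large depending on $\epsilon$ and $\rho$. For the far part $\nu_2^i$, the H\"older regularity \eqref{eq:yHol} of $S$ combined with the geometric continuity Lemma \ref{lem:geometric problem} (controlling $\sigma(\Gamma_{tr}(y)\Delta\Gamma_{tr}(y_i^*))$ by $1/t$) yields an oscillation bound of the form
\[
|\calC_{\mu,s}^t \nu_2^i(y) - \calC_{\mu,s}^t \nu_2^i(y_i^*)| \lesssim \Lambda^{-\vt} M_\mu f(y'), \qquad y \in B_i,
\]
for some $\vt > 0$ and some $y' \in 10 B_i$, with $M_\mu f(y')$ comparable to values of $M_\mu f$ at points testing the good lambda condition via $(10,b)$-doubling. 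For the near part $\nu_1^i$, the hypothesized weak-type bound on $G_{B_i}$ gives
\[
\mu(\{y \in G_{B_i} : \calC \nu_1^i(y) > \epsilon\lambda/2\}) \lesssim \frac{|\nu_1^i|(E)}{\epsilon\lambda} \lesssim \frac{\delta}{\epsilon}\,\mu(B_i),
\]
the latter using $|\nu_1^i|(E) \lesssim \delta\lambda\,\mu(B_i)$ which is enforced by the assumption $M_\mu f \le \delta\lambda$ and doubling of $\Lambda B_i$. Discarding $B_i \setminus G_{B_i}$ (of $\mu$-measure at most $(1-\theta)\mu(B_i)$) and choosing $\Lambda$ large enough that the far-part oscillation stays below $\epsilon\lambda/2$, we obtain
\[
\mu\big(B_i \cap \{\calC_{\mu,s}^t f > (1+\epsilon)\lambda,\ M_\mu f \le \delta\lambda\}\big) \le (1 - \theta + C\delta/\epsilon)\,\mu(B_i).
\]
Summing over $i$ and adding $\mu(U_\lambda \setminus \bigcup_i B_i) \le (1 - (2b)^{-1})\mu(U_\lambda)$ produces the good lambda inequality with $\eta = 1 - \theta/(2b) + O(\delta/\epsilon)$.

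The main technical obstacle is the far-part oscillation estimate. Unlike the upper half-space, where the cones have a rigid vertical structure, here the cones over two nearby points of $E$ can differ in a geometrically nontrivial way; the $1/t$-decay of the symmetric difference provided by Lemma \ref{lem:geometric problem} is precisely what renders the H\"older regularity \eqref{eq:yHol} of $S$ usable at the level of $\calC$. After the oscillation bound is in hand, balancing $\rho$, $\Lambda$, $\epsilon$, $\delta$ to ensure $\eta < (1+\epsilon)^{-p}$ for a prescribed $p$, and handling the usual a priori finiteness issue by the truncation plus monotone convergence, is routine.
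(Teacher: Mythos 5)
Your overall scheme (Whitney decomposition, per-ball near/far split, weak-type on the good pieces, summing via bounded overlap) is the right one and matches the paper. However, the way you organize the near/far split contains a genuine gap that is specific to the non-homogeneous setting.

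You split $\nu = f\,d\mu$ at the scale $\Lambda B_i$ for a constant $\Lambda$ to be taken large depending on $\epsilon$ (and $\rho$), claiming an oscillation bound $|\calC_{\mu,s}^t\nu_2^i(y)-\calC_{\mu,s}^t\nu_2^i(y_i^*)|\lesssim\Lambda^{-\vt}M_\mu f(y')$ and $|\nu_1^i|(E)\lesssim\delta\lambda\mu(B_i)$ ``by doubling of $\Lambda B_i$.'' Both steps fail as stated. First, $\mu$ is non-doubling: the Whitney balls $B_i$ are $(10,b)$-doubling, but $\mu(\Lambda B_i)$ cannot be compared to $\mu(B_i)$ for an arbitrarily large $\Lambda$, so the bound $|\nu_1^i|(E)\lesssim\delta\lambda\mu(B_i)$ is not available. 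Second, the bounded-overlap constant $D_0$ in Lemma~\ref{Whitney type lemma} depends on $\rho$, and the admissible $b$ depends on $D_0$; since $b$ is fixed in the statement of the theorem, neither $\rho$ nor $\Lambda$ can be taken to depend on $\epsilon$. Third, the symmetric-difference estimate of Lemma~\ref{lem:geometric problem} is used at the truncation scale comparable to $C_2 r_i$, where it yields a bounded constant, not a $\Lambda^{-\vt}$ decay; and the low-height contribution $\calC_{\mu,s}^{\Lambda r_i}\nu_2^i$ would itself contribute a term $\lesssim\delta\lambda$ with no gain in $\Lambda$. The correct mechanism, and the one the paper uses, is to split at a \emph{fixed} multiple $2B_i$ and to let all smallness come from the pointwise hypothesis $M_\mu f(y)\leq\delta\lambda$ together with the reference point $y'\in C_2B_i\setminus\Omega_\lambda$ satisfying $\calC_{\mu,s}^tf(y')\leq\lambda$: one obtains $\calC_{\mu,s}^t(1_{(2B_i)^c}f)(y)\leq(1+C\delta)\lambda$ with $C$ absolute, so that $\delta$ alone is the shrinking parameter. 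Lemma~\ref{lem:geometric problem} enters only to bound the $\sigma$-measure of the cone symmetric difference at the fixed scale $20C_2r_i$, and the near part is controlled via $\|1_{2B_i}f\|_{L^1(\mu)}\leq\mu(5B_i)M_\mu f(y_0)\lesssim\delta\lambda\,\mu(5B_i)$, after which $\sum_i\mu(5B_i)\lesssim\mu(\Omega_\lambda)$ follows from the Whitney bounded overlap since $5\leq C_1/2$.

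Two smaller omissions: you need the continuity of $y\mapsto\calC_{\mu,s}^tf(y)$ (also via Lemma~\ref{lem:geometric problem}) so that $\Omega_\lambda$ is relatively open before Whitney applies; and, when $\diam(E)<\infty$, you must treat the degenerate case $\Omega_\lambda=E$ separately (using the interpretation of Remark~\ref{rem:findiam} that $E$ itself is one of the testing balls), since the Whitney lemma requires $\Omega_\lambda\subsetneq E$.
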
 
 
 \begin{rem}\label{L^2 implies weak (1,1)}
In Theorem \ref{thm:goodlambda} the assumption that $\calC \colon \calM(E) \to L^{1,\infty}(\mu \rest G_B)$ is bounded can be replaced by the assumption that $\calC_\mu \colon L^2(\mu \lfloor G_B) \to L^2(\mu \lfloor G_B)$ is bounded, because the latter implies the former using standard reasoning. This is proved in Appendix \ref{weak (1,1)-boundedness}.  
 \end{rem}

\begin{proof}[Proof of Theorem \ref{thm:goodlambda}] 

Fix an exponent $p \in (1,\infty)$.  Since for any $f \in L^p(\mu)$ it holds that
$$
\calC_{\mu,s}^t f(y)=\Big(\int_{\Gamma_s^t(y)} |T_\mu f(x)|^{2} d(x,E)^{2\alpha} \ud \sigma(x) \Big)^{\frac{1}{2}} \nearrow \calC_{\mu}f(y),
$$
as $s \to 0$ and $t \to \infty$, it is enough to bound the operators $\calC_{\mu,s}^t$ uniformly for $s \in (0,1)$ and $t>1$. 

Note that every $\calC_{\mu,s}^t$ is \emph{a priori} bounded in $L^p(\mu)$ by Lemma \ref{truncated is bounded}. Hence, it suffices to prove that
$$
\|\calC_{\mu,s}^t f\|_{L^p(\mu)} 
\lesssim
\|f\|_{L^{p}(\mu)}
$$
holds uniformly for bounded and boundedly supported functions $f$. From now on such a function $f$ is fixed.

Note that the mapping
$$
y \mapsto \calC_{\mu,s}^t f(y), \quad y \in E,
$$
is continuous, and hence the sets $\{\calC_{\mu,s}^t f>\lambda\}$ are open in $E$ for every $\lambda>0$. Indeed, if $y,y' \in E$, then
\begin{equation}\label{continous square}
\begin{split}
|\calC_{\mu,s}^tf(y)&-\calC_{\mu,s}^t f(y')| \\ &\leq \Big(\int_{\R^n \setminus E}|1_{\Gamma_s^t(y)} T_\mu f(x)- 1_{\Gamma_s^t(y')} T_\mu f(x)|^2d(x,E)^{2\alpha} \ud  \sigma(x)\Big)^{\frac{1}{2}} \\
& \lesssim \frac{\|f\|_{L^1(\mu)}}{s^m} \sigma\big(\Gamma_s^t(y) \Delta \Gamma_s^t(y')\big)^{\frac{1}{2}}.
\end{split}
\end{equation}
When $|y-y'|$ is so small that $10 |y-y'| < s$, then from \eqref{eq:geometric problem} it follows that
$$
\sigma\big(\Gamma_s^t(y) \Delta \Gamma_s^t(y')\big) 
\leq \sigma\big(\Gamma_s(y) \Delta \Gamma_s(y')\big) 
 \lesssim \frac{|y-y'|}{s},
$$
which converges to zero as $y' \to y$. 

The main thing to prove in the good lambda method is the \emph{good lambda inequality} \eqref{eq:good lambda}.  It says that for every $\varepsilon>0$ there exists $\delta>0$ such that the following holds. If $\lambda>0$, then
\begin{equation}\label{eq:good lambda}
\mu\big(\{y \in E: \calC_{\mu,s}^t f(y) >(1+\varepsilon)\lambda ;  M_\mu f(y) \leq \delta\lambda\}\big) \leq \Big(1-\frac{\theta}{4b}\Big)\mu\big(\{\calC_{s,\mu}^t f >\lambda\}\big).
\end{equation}
That $\calC_{\mu,s}^t$ is bounded follows from this inequality and the boundedness of $M_\mu$ in a standard manner.

Fix $\lambda,\varepsilon>0$ and let $\delta>0$ be some number to be specified during the proof. We would like to begin the proof of \eqref{eq:good lambda} by applying the Whitney type lemma \ref{Whitney type lemma} to the relatively open set $\{\calC_{\mu,s}^tf>\lambda\}=:\Omega_\lambda$, and for this reason we need that $\Omega_\lambda \subsetneq E$.

Suppose $\diam (E) =\infty$. If $y \not \in \supp f$, then
\begin{equation*}
\begin{split}
\calC_{\mu,s}^t f(y)^2 = \int_{\Gamma_s^t(y)} |T_\mu f(x)|^2 d(x,E)^{2\alpha} \ud \sigma(x) 
& \lesssim \frac{\|f\|_{L^1(\mu)}^2}{d(x,\supp f)^{2m}} \sigma\big(\Gamma_s^t(y)\big) \\
&\lesssim_{s,t} \frac{\|f\|_{L^1(\mu)}^2}{d(y,\supp f)^{2m}} \to 0,
\end{split}
\end{equation*}
as $d(y,\supp f) \to \infty$. Hence in this case $\{\calC^t_{\mu,s}f>\lambda\} \subsetneq E$ holds. And actually we see that $\Omega_\lambda$ is always a bounded set.
 
\subsection*{The case $\diam(E)<\infty$} 
 Suppose $\diam(E)<\infty$, whence $\mu(E)<\infty$, and assume also $\Omega_\lambda=E$. To have something to prove in \eqref{eq:good lambda},  we may suppose that the left hand side there is non-zero. Then there exists $y_0 \in E$ such that $M_\mu f(y_0) \leq \delta\lambda$.
By assumption (by the same interpretation as in Remark \ref{rem:findiam})
there exists a set $G_E \subset E$ with $\mu(G_E) \geq \theta \mu(E)$ where $\calC_{\mu,s}^t \colon \calM(E) \to L^{1,\infty}(\mu \lfloor G_E)$ is bounded. Thus
\begin{equation*}
\begin{split}
\mu\big(\{y \in E: \calC_{\mu,s}^t f(y)>(1+\varepsilon)&\lambda; M_\mu f(y)\leq \delta \lambda\} \big)\\
&\leq \mu\big(E \setminus G_E) + \mu\big(\{y \in G_E: \calC_{\mu,s}^t f(y)>(1+\varepsilon)\lambda\}\big) \\
&\leq (1-\theta)\mu(E) + \frac{C}{(1+\varepsilon)\lambda} \|f\|_{L^1(\mu)} \\
&=(1-\theta)\mu(E) + \frac{C\mu(E)}{(1+\varepsilon)\lambda} \frac{\|f\|_{L^1(\mu)}}{\mu(E)} \\
&\leq (1-\theta)\mu(E) + \frac{C\mu(E)}{(1+\varepsilon)\lambda} \delta\lambda \\
&\leq \Big(1-\frac{\theta}{4b}\Big)\mu(E) = \Big(1-\frac{\theta}{4b}\Big)\mu(\Omega_{\lambda})
\end{split}
\end{equation*}
if $\delta>0$ is small enough.

We have shown that if $\diam E < \infty$, then Inequality \eqref{eq:good lambda} holds  for those $\lambda$ such that $\Omega_\lambda=E$.

\subsection*{}
So in any case we may assume that $\Omega_\lambda \subsetneq E$. As we noted above $\Omega_\lambda$ is a bounded set, and thus $\mu(\Omega_\lambda)$ is finite. Apply Lemma \ref{Whitney type lemma} with parameters $a=10, \rho=160, b $ and $\kappa$ to the open set $\Omega_\lambda$. This choice gives $C_1=20$ and $C_2=\frac{86000}{3}$. Let $\{B_i\}_{i \in \calI}$ be the resulting set of balls in $E$, and let $r_i$ be the radius of $B_i$. 

Since the balls $B_i$ are closed, $(10,b)$-doubling and have a $\kappa$-small boundary, there exists for every $i$ a set $G_i \subset B_i$ as in the assumptions. Hence
\begin{equation}\label{GL:reduction to big pieces}
\begin{split}
\mu\big(\{y &\in E\colon \calC_{\mu,s}^t f(y)>(1+\varepsilon)\lambda;M_\mu f(y) \leq \delta \lambda\}\big)  \\
&\leq \mu\big(\Omega_\lambda \setminus \bigcup_{i \in \calI} B_i \big) +\sum_{i \in \calI}\mu\big(B_i\setminus G_i\big) \\
& + \sum_{i \in \calI} \mu\big(\{y \in G_i: \calC_{\mu,s}^tf(y)>(1+\varepsilon)\lambda;M_\mu f(y) \leq \delta\lambda\}\big) \\
&\leq \Big(1-\frac{\theta}{2b}\Big)\mu(\Omega_\lambda)  +\sum_{i \in \calI} \mu\big(\{y \in G_i: \calC_{\mu,s}^tf(y)>(1+\varepsilon)\lambda;M_\mu f(y) \leq \delta\lambda\}\big).
\end{split}
\end{equation}
It remains to consider the last sum above. 

\subsection*{Step I} Fix some $i \in \calI$. Suppose $y \in G_i$ is such that $\calC_{\mu,s}^tf(y)>(1+\varepsilon)\lambda$ and $M_\mu f(y) \leq \delta \lambda$. First we will show that then
\begin{equation*}
\calC_{\mu,s}^t(1_{2B_i}f)(y) >\frac{\varepsilon}{2}\lambda
\end{equation*}
if $\delta(\varepsilon)$ is small enough. Since
$$
\calC_{\mu,s}^t (1_{2B_i}f)(y) \geq \calC_{\mu,s}^tf(y) -\calC_{\mu,s}^t(1_{(2B_i)^c}f)(y),
$$ 
this follows from showing that 
\begin{equation}\label{step I follows}
\calC_{\mu,s}^t(1_{(2B_i)^c}f)(y) \leq \Big(1+\frac{\varepsilon}{2}\Big)\lambda. 
\end{equation}
 
Assume for the moment that $s \leq 20C_2r_i\leq t$. For $x \in \Gamma(y)$ we have by Lemma \eqref{distance to a point in E} that
\begin{equation*}
\begin{split}
|T_\mu (1_{(2B_i)^c}f)(x)| \lesssim \int_{E\setminus 2Bi} \frac{|f(z)|}{|x-z|^{m+\alpha}} \ud \mu(z) 
&\lesssim \int_{E\setminus 2B_i} \frac{|f(z)|}{|y-z|^{m+\alpha}} \ud \mu(z) \\ 
&\lesssim \int_E \frac{|f(z)|}{(r_i + |y-z|)^{m+\alpha}} \ud \mu(z) \\
&\lesssim r_i^{-\alpha} M_\mu f(y) \leq r_i^{-\alpha}\delta \lambda.
\end{split}
\end{equation*} 
Hence 
\begin{equation}\label{est:good lambda lower}
\begin{split}
\calC_{\mu,s}^{20 C_2r_i}(1_{(2B_i)^c}f)(y) 
&\lesssim r_i^{-\alpha} \delta\lambda \Big(\int_{\Gamma^{20 C_2r_i}(y)} d(x,E)^{2\alpha} d \sigma(x)\Big)^{\frac{1}{2}} \\
& \lesssim r_i^{-\alpha}\delta \lambda(20 C_2 r_i)^\alpha 
\sim \delta\lambda,
\end{split}
\end{equation}
where in the second step we used a similar estimate as in the proof of Lemma \ref{cone computation}. 

Because of the Whitney properties of the balls $B_i$ there exists a point $y' \in C_2B_i \cap E \setminus \Omega_\lambda$, whence by definition $\calC_{\mu,s}^tf(y') \leq \lambda$. Thus, we can estimate
\begin{equation}\label{est:good lambda 1}
\begin{split}
\calC_{\mu,s}^t(1_{(2B_i)^c}f)(y) 
&\leq \calC_{\mu,s}^{20 C_2r_i}(1_{(2B_i)^c}f)(y) + \calC_{\mu,20 C_2r_i}^t(1_{(2B_i)^c}f)(y) \\
&\leq C \delta \lambda + \big|\calC_{\mu,20 C_2r_i}^t(1_{(2B_i)^c}f)(y)-\calC_{\mu,20 C_2r_i}^t(1_{(2B_i)^c}f)(y')\big| \\
& \quad \quad +\calC_{\mu,20 C_2r_i}^t(1_{(2B_i)^c}f)(y'),
\end{split}
\end{equation}
where further
\begin{equation}\label{est:good lambda 2}
\begin{split}
\calC_{\mu,20 C_2r_i}^t(1_{(2B_i)^c}f)(y')  
&\leq \calC_{\mu,s}^tf(y') +\calC_{\mu,20C_2r_i}^t(1_{2B_i}f)(y') \\
&\leq \lambda + \calC_{\mu,20C_2r_i}^t(1_{2B_i}f)(y').
\end{split}
\end{equation}

We continue with estimating the difference in \eqref{est:good lambda 1}. Suppose $x \in \Gamma_{20C_2r_i}(y')$. Then because $|y-y'| < 20^{-1} d(x,E)$, every $z \in E$ satisfies
$$
|x-z| \sim d(x,E)+ |y'-z| \sim d(x,E)+ |y-z|.
$$
This gives $|T_\mu (1_{(2B_i)^c}f)(x)| \lesssim d(x,E)^{-\alpha} M_\mu f(y)$ by Lemma \ref{annulus calculation}. Since the same estimate holds for all $x \in \Gamma(y)$, we have
\begin{equation}\label{continuity of truncated}
\begin{split}
|\calC_{\mu,20C_2r_i}^t&(1_{(2B_i)^c}f)(y)-\calC_{\mu,20C_2r_i}^t(1_{(2B_i)^c}f)(y')|  \\
&\leq \Big( \int_{\Gamma_{20 C_2 r_i}(y) \Delta \Gamma_{20 C_2 r_i}(y')} |T_\mu (1_{(2B_i)^c}f)(x) |^2 d(x,E)^{2\alpha} \ud \sigma(x) \Big)^{\frac{1}{2}} \\
&\lesssim \delta \lambda \sigma \Big(\Gamma_{20 C_2 r_i}(y) \Delta \Gamma_{20 C_2 r_i}(y') \Big)^{\frac{1}{2}} \\
&\lesssim \delta \lambda,
\end{split}
\end{equation} 
where we used Lemma \ref{lem:geometric problem} to estimate the measure of the symmetric difference of the cones.

Now we take care of the last term in \eqref{est:good lambda 2}. Note that  
$$|T_\mu (1_{2B_i}f)(x)| \lesssim d(x,E)^{-m-\alpha}\|1_{2B_i}f\|_{L^1(\mu)}
$$ 
holds for every $x \in \R^n\setminus E$. Hence
\begin{equation*}
\begin{split}
\calC_{\mu,20C_2r_i}^t(1_{2B_i}f)(y') & \lesssim \|1_{2B_i}f\|_{L^1(\mu)} \Big( \int_{\Gamma_{20C_2r_i}(y')}d(x,E)^{-2m} \ud \sigma(x) \Big)^{\frac{1}{2}} \\
& \lesssim \frac{\|1_{2B_i}f\|_{L^1(\mu)}}{(20C_2r_i)^m} \lesssim M_\mu f(y) \leq \delta \lambda.
\end{split}
\end{equation*}

Combining the above estimates with \eqref{est:good lambda 1}, we have shown that there exists an absolute constant $C$ such that 
$$
\calC_{\mu,s}^t (1_{(2B_i)^c}f)(y) \leq (C\delta +1)\lambda.
$$
If $\delta(\varepsilon)$ is small enough, then this gives \eqref{step I follows}.

The cases $20C_2r_i >t$ and $20C_2r_i<s$ need only parts of the above estimates. Indeed, if $20C_2r_i>t$, then as in \eqref{est:good lambda lower} we have
$$
\calC_{\mu,s}^t(1_{(2B_i)^c}f)(y) \leq \calC_{\mu,s}^{20C_2r_i}(1_{(2B_i)^c}f)(y) \lesssim \delta \lambda,
$$
which is clearly less than $(1+\frac{\varepsilon}{2})\lambda$ for small $\delta$. If on the other hand $20C_2r_i <s$, then with the same estimates as above we get
\begin{equation*}
\begin{split}
\calC_{\mu,s}^t(1_{(2B_i)^c}f)(y) 
&\leq \big|\calC_{\mu,s}^t(1_{(2B_i)^c}f)(y)-\calC_{\mu,s}^t(1_{(2B_i)^c}f)(y')\big|+\calC_{\mu,s}^t(1_{(2B_i)^c}f)(y') \\
&\leq C \delta \lambda  +\calC_{\mu,s}^t f(y') + \calC_{\mu,s}^t(1_{2B_i}f)(y') \\
& \leq C \delta \lambda + \lambda + C\delta \lambda,
\end{split}
\end{equation*}
and this again is less than $(1+\frac{\varepsilon}{2})\lambda$ for small $\delta$. Hence \eqref{step I follows} holds in any case.

\subsection*{Step II}
Fix again some $i \in  \calI$ and consider the term
$$
\mu\big(\{y \in G_i: \calC_{\mu,s}^tf(y)>(1+\varepsilon)\lambda; M_\mu f(y) \leq \delta\lambda\}\big).
$$
We may assume that there exists a point $y_0$ in $G_i$ such that $\calC_{\mu,s}^tf(y_0)>(1+\varepsilon)\lambda$ and $M_\mu f(y_0) \leq \delta\lambda$. Step I and the weak $(1,1)$-boundedness of $\calC_{\mu,s}^t \colon \calM(E) \to L^{1,\infty}(\mu\lfloor G_i)$ give

\begin{equation*}
\begin{split}
\mu\big(\{y \in G_i\colon  &\calC_{\mu,s}^t f(y)>(1+\varepsilon)\lambda;M_\mu f(y) \leq \delta\lambda\}\big) \\
&\leq \mu\big(\{ y \in  G_i\colon  \calC_{\mu,s}^t (1_{2B_i}f)(y) >\frac{\varepsilon}{2}\lambda \}\big) \\
& \lesssim  \frac{1}{\varepsilon\lambda} \| 1_{2B_i}f\|_{L^1(\mu)} \\
&\leq \frac{\mu(5 B_i)}{\varepsilon \lambda} M_\mu f(y_0) 
\leq \mu(5B_i) \frac{\delta}{\varepsilon}.
\end{split}
\end{equation*}

\subsection*{Step III}
Finally we can finish the estimate \eqref{GL:reduction to big pieces}.  Recall that the balls $5B_i, i \in \calI$, have bounded overlap. With Step II we have  
\begin{equation*}
\begin{split}
\sum_{i \in \calI} \mu\big(\{y \in G_i: \calC_{\mu,s}^tf(y)>(1+\varepsilon)\lambda;M_\mu f(y) \leq \delta\lambda\}\big)
&\lesssim \frac{\delta}{\varepsilon} \sum_{i \in \calI} \mu(5B_i)  \\
&\lesssim  \frac{\delta}{\varepsilon} \mu(\Omega_\lambda).
\end{split}
\end{equation*}
Combining this with Equation \eqref{GL:reduction to big pieces}, we see that again if $\delta(\varepsilon)$ is small enough, then 
$$
\mu\big(\{y \in E\colon \calC_{\mu,s}^t f(y)>(1+\varepsilon)\lambda;M_\mu f(y) \leq \delta \lambda\}\big)
\leq \Big(1-\frac{\theta}{4b}\Big)\mu(\Omega_\lambda).
$$
This concludes the proof.
\end{proof}

\section{The big pieces global $Tb$ theorem}
\begin{thm}\label{the big piece Tb}
Let $B \subset E$ be a closed ball  in $E$ and assume $\mu$ is a finite Borel  measure with support in $B$. Let $\calD_B(\omega), \omega \in \Omega,$ be a family of dyadic lattices related to the ball $B$ as explained in Section \ref{preliminaries}.  Assume $b \in L^\infty(\mu)$. Suppose $c_{acc}>0$ and for any $\omega \in \Omega$ let $T_\omega$ be the union of the maximal cubes $R \in \calD_B(\omega)$ that satisfy 
$$
\Big|  \int_R b \ud \mu \Big| <c_{acc} \mu(R). 
$$
Furthermore, we assume that there exists a Borel set $H$, an exponent $s>0$ and constants $\delta_0 \in (0,1)$ and $C_0,C_1>0$ so that the following conditions hold:   
\begin{enumerate}
\item $\mu(T_\omega \cup H) \leq \delta_0 \mu(B)$ for all $\omega\in \Omega$.
\item If  $B_r$ is a closed ball of radius $r$ in $E$ and $\mu(B_r) \geq C_0 r^m$, then $B_r \subset H$.
\item $\sup_{\lambda>0} \lambda^s \mu\big(\{y \in B\setminus H: C_\mu b(y)>\lambda \}\big) \leq C_1 \mu (B)$.
\end{enumerate}

Under these assumptions there exists a set $G \subset B$ with $\mu(G)\geq \frac{1-\delta_0}{3} \mu(B)$ such that
\begin{equation}\label{bounded in big piece}
\|1_G \calC_\mu f \|_{L^2(\mu)} \lesssim \|f\|_{L^2(\mu)}
\end{equation}
holds for all $f \in L^2(\mu)$.
\end{thm}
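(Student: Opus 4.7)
The plan is to realise $G$ as a large piece on which $\calC_\mu$ coincides with a suitably \emph{suppressed} variant $\wt\calC_{\omega,\mu}$ for which a genuine $L^2$ global $Tb$ theorem can be proved, and then to obtain $G$ by averaging over the random dyadic lattices $\calD_B(\omega)$. This is the Nazarov--Treil--Volberg suppression strategy used already in \cite{MMV,MMT}, now specialised to the conical setting on a general closed set $E$.

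First, for each $\omega\in\Omega$ I would introduce a Lipschitz suppression function $\Phi_\omega$ on $E$ which vanishes on $B\setminus(T_\omega\cup H)$, is comparable to $\ell(R)$ on each maximal stopping cube $R$ making up $T_\omega$, and is comparable to $\dist(\cdot,E\setminus H)$ inside $H$. Extend $\Phi_\omega$ to $\R^n\setminus E$ by the Lipschitz infimum and define the suppressed kernel $\wt S_\omega(x,y):=S(x,y)\chi_{d(x,E)\geq c\Phi_\omega(y)}$, with a fixed constant $c$, and the corresponding conical square function $\wt\calC_{\omega,\mu}$. Two features are built in by design: (i) $\wt\calC_{\omega,\mu} f(y)=\calC_\mu f(y)$ whenever $y\in B\setminus(T_\omega\cup H)$, where $\Phi_\omega(y)=0$; (ii) the truncation combined with assumption (2) (which forces the $m$-growth $\mu(B(y,r))\leq C_0 r^m$ outside $H$, hence on the whole support of the suppressed kernel) yields a universal pointwise estimate $\wt\calC_{\omega,\mu} b(y)\lesssim \|b\|_\infty$ on $E$.

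The $Tb$ part then proceeds in two moves. Using (i), (ii) and the weak testing assumption (3), a standard level-set decomposition upgrades the weak $(L^\infty,L^{s,\infty})$ bound into the $L^2$ testing estimate $\|\wt\calC_{\omega,\mu} b\|_{L^2(\mu)}^2\lesssim \mu(B)$. With $L^2$ testing on the $c_{acc}$-accretive system $b$ in hand (accretivity holds on every $R\in\calD_B(\omega)$ not contained in $T_\omega$), I would expand an arbitrary $f\in L^2(\mu)$ into $b$-adapted martingale differences on $\calD_B(\omega)$, discard bad cubes using the probabilistic bound of Lemma \ref{probability of badness}, and then control the separated, diagonal and paraproduct terms using the H\"older regularity \eqref{eq:yHol}, Lemma \ref{cone computation} for radial cone integrals, and Lemma \ref{lem:geometric problem} for comparison of nearby cones. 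This yields
\[
\Exp_\omega \|\wt\calC_{\omega,\mu} f\|_{L^2(\mu)}^2 \lesssim \|f\|_{L^2(\mu)}^2.
\]

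Finally, to extract $G$, assumption (1) and Fubini give $\int_B \prob(\{\omega: y\in T_\omega\cup H\})\ud\mu(y)\leq \delta_0\mu(B)$. A Chebyshev step tuned to the factor $1/3$ produces a set $G\subset B$ with $\mu(G)\geq\frac{1-\delta_0}{3}\mu(B)$ such that every $y\in G$ lies outside $T_\omega\cup H$ for an $\omega$-event of probability bounded below by an absolute constant. On this event $1_G\calC_\mu f=1_G\wt\calC_{\omega,\mu} f$ pointwise, so averaging in $\omega$ together with the $L^2$ bound above gives \eqref{bounded in big piece}. The main technical obstacle I expect is steps (ii)--(iii): designing the cone-adapted suppression so that the weak $(1,s)$ testing of assumption (3) on $\calC_\mu b$ faithfully propagates to a genuine $L^2$ testing estimate for $\wt\calC_{\omega,\mu}b$, and so that the much less symmetric geometry of cones over the irregular set $E$ is still controlled through the Whitney decomposition of $\sigma$ (Lemma \ref{whitney section of a cone}) and the symmetric-difference bound of Lemma \ref{lem:geometric problem}. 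A secondary subtlety is that the random dyadic cubes live in a finite probability space $\Omega_{k_0}^{k_1}$, which must be tracked carefully throughout the expansion to keep all exceptional events of measurable and small probability.
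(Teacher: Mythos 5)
Your high-level outline -- suppression, randomisation over $\omega$, a good/bad reduction using Lemma \ref{probability of badness}, a $Tb$ expansion into $b$-adapted martingale differences, and a Fubini/Chebyshev construction of the big piece $G$ -- coincides with the paper's strategy. The crucial difference, and the genuine gap, lies in what drives the suppression.

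You model the suppression on the Calder\'on--Zygmund case of \cite{MMT}: a Lipschitz function $\Phi_\omega$ built from the scales $\ell(R)$ of the stopping cubes making up $T_\omega$ and from $\dist(\cdot,E\setminus H)$ inside $H$, and you then claim (your point (ii)) that this truncation together with the $m$-growth of $\mu$ outside $H$ forces $\wt\calC_{\omega,\mu}b\lesssim\|b\|_\infty$ pointwise. This is false: since $S$ obeys only the size estimate \eqref{eq:size} (no cancellation) and $\sigma(\Gamma(y))=\infty$ (the Whitney sections $\Gamma^{2^{-k}r}_{2^{-k-1}r}(y)$ have uniformly bounded $\sigma$-mass but there are infinitely many of them), the estimate $|T_\mu b(x)|\lesssim \|b\|_\infty\,d(x,E)^{-\alpha}$ gives $\calC_\mu b(y)^2\lesssim\|b\|_\infty^2\,\sigma(\Gamma(y))=\infty$. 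Removing a near field at scale $\ell(R)$ or $\dist(y,E\setminus H)$ does nothing to cure this, because those scales are not tied to where $\calC_\mu b$ is large. Consequently the claimed upgrade to an $L^2$ testing estimate also fails; moreover, even granting an $L^\infty$ bound, an interpolation from assumption (3) would need $s<2$, whereas $s>0$ is arbitrary. The paper avoids all of this by choosing the suppression to be adapted to the level set $S_0=\{\calC_\mu b>\lambda_0\}$ and the scales $t(y)=\sup\{t>0:\calC_{\mu,t}b(y)>\lambda_0\}$, combined with a density stopping radius $r(y)$ coming from assumption (2). By construction this yields the genuine $L^\infty$ testing bound $\tcalC_\mu b\leq\lambda_0$ on all of $B$ (which is what feeds the Carleson condition in the paraproduct), and the weak testing (3) is used \emph{only} to bound the $\mu$-measure of the exceptional set $S=\bigcup_{y\in S_0}B_E(y,10\max(t(y),r(y)))$, via Lemma \ref{lem:geometric problem}. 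Note also that this suppression is $\omega$-independent, which simplifies the bookkeeping: only the sets $T_\omega$ and $H$ then vary with $\omega$, and the averaging picks up the (fixed) set $S$ as an additional exceptional set.

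So: the architecture of your proof is right, but the core analytic lemma -- the $L^\infty$ testing for the suppressed operator -- does not follow from the mechanism you propose, and without it the paraproduct in the $Tb$ argument cannot be closed. You need to build the suppression out of the level sets of $\calC_\mu b$ itself (and the density stopping $r(y)$ ensured by assumption (2)), not out of $T_\omega$.
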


Before the proof we recall the \emph{$b$--adapted martingales}. Suppose we are in the set-up of Theorem \ref{the big piece Tb} and let $\omega \in \Omega$. A cube $Q \in \calD_B(\omega)$ is said to be ($\omega$-) \emph{transit} if $Q \not \subset  T_\omega \cup H$. Denote the collection of transit cubes by $\calD_B^{tr}(\omega)$. If $g$ is locally $\mu$-integrable and $Q \in \calD_B(\omega)$ we denote the average of $g$ over $Q$ by 
$$
\langle g \rangle_Q:= \frac{1}{\mu(Q)} \int_Q g \ud \mu
$$
with the understanding that if $\mu(Q)=0$ then $\langle g \rangle_Q=0$.

Let $f \in L^2(\mu)$.  For the top cube $Q_B(\omega)$ define  
$$
E_{Q_B(\omega)}f := \frac{\langle f \rangle_{Q_B(\omega)}}{\langle b \rangle_{Q_B(\omega)}}b 1_{Q_B(\omega)}.
$$
For any cube  $Q \in \calD^{tr}_B(\omega)$  define
$$
\Delta_Q f:= \sum_{Q' \in ch (Q)} A_{Q'}1_{Q'},
$$ 
where 
$$
A_{Q'}
:= \left\{ \begin{array}{ll}
\Big( \frac{\langle f \rangle_{Q'}}{\langle b \rangle_{Q'}}-\frac{\langle f \rangle_Q}{\langle b \rangle_Q} \Big)b, & \textrm{if } Q' \in \calD^{tr}_B(\omega),\\
f-\frac{\langle f \rangle_Q}{\langle b \rangle_Q}b, & \textrm{if } Q' \not \in \calD^{tr}_B(\omega).\\
\end{array} \right.
$$
Here all the averages are defined with respect to the measure $\mu$. 

With these the $L^2(\mu)$-norm of $f$ can be estimated as
\begin{equation}\label{norm with martingales}
\|f\|_{L^2(\mu)} \sim \Big(  \| E_{Q_B(\omega)}f \|_{L^2(\mu)}^2+ \sum_{Q \in \calD^{tr}_B(\omega)} \|\Delta_Q f \|_{L^2(\mu)}^2\Big)^{\frac{1}{2}} ,
\end{equation}
and the function $f$ can be represented as
$$
f = E_{Q_B(\omega)}f + \sum_{Q \in \calD_B^{tr}(\omega)} \Delta_Q f,
$$
where  convergence takes place unconditionally in $L^2(\mu)$. Also, for every  $Q \in \calD^{tr}_B(\omega)$ it holds that
$$
\int_Q \Delta_Q f \ud \mu =0.
$$

For a proof of these facts see Tolsa's book \cite{ToBook}, Section 5.4.4. Notice that therein the proofs are given for the standard dyadic cubes in the plane, but the same arguments can be carried out in our setting.

Recall that the dyadic lattices $\calD_B(\omega)$ are subcollections of dyadic lattices $\calD(\omega)$ in $E$, see Section \ref{preliminaries}. Fix some $\omega_0 \in \Omega$ and write $\calD_0:= \calD(\omega_0)$. A cube $R \in \calD_0$ is called $\omega$-transit if $R \not \subset T_\omega \cup H$. Denote the collection of transit cubes in $\calD_0$ by $\calD^{tr}_0.$ So the definition of transit cubes in $\calD_0$ depends on $\omega$, but later it will be clear what the $\omega$ is.

In the $Tb$-argument we use the following lemma, whose proof is again essentially given in \cite[Lemma 5.16]{ToBook}. 

\begin{lem}\label{matrix lemma}
Suppose $\omega \in \Omega$ and $s>0$. Let $\calD_0^{tr}$ be the collection of $\omega$-transit cubes in $\calD_0$. For $R \in \calD_0^{tr}$ and $Q \in \calD^{tr}_B(\omega)$ define the numbers
$$
A_{Q,R}^s:= \frac{\ell(Q)^{\frac{s}{2}} \ell(R)^\frac{s}{2}}{D(Q,R)^{m+s}} \mu(Q)^{\frac{1}{2}} \mu(R)^{\frac{1}{2}},
$$
where $D(Q,R):= \ell(Q)+\ell(R)+d(Q,R)$. Then the matrix $\{A_{Q,R}^s\}_{Q \in \calD_B^{tr}(\omega), R \in \calD^{tr}_0}$ defines a bounded linear operator in $\ell^2$, that is, for any two sets $\{x_Q\}_{Q \in \calD^{tr}_B(\omega)}, \{y_R\}_{R \in \calD^{tr}_0}$ of non-negative real numbers we have the estimate
$$
\sum_{Q \in \calD_B^{tr}(\omega),R \in \calD^{tr}_0} A_{Q,R}^s x_Qy_R \lesssim \Big(\sum_{Q \in \calD_B^{tr}(\omega)} x_Q^2\Big)^{\frac{1}{2}} \Big( \sum_{R \in \calD^{tr}_0}y_R^2\Big)^\frac{1}{2}.
$$
\end{lem}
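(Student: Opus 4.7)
The plan is to deduce the bilinear estimate from Schur's test. Choose the weights $w_Q = \mu(Q)^{1/2}$ and $w_R = \mu(R)^{1/2}$; setting $x_Q = \alpha_Q w_Q$, $y_R = \beta_R w_R$ and using Cauchy--Schwarz reduces matters to proving the uniform one-sided estimate
\begin{equation*}
\sum_{R \in \calD_0^{tr}} \frac{\ell(Q)^{s/2} \ell(R)^{s/2}}{D(Q,R)^{m+s}} \mu(R) \lesssim 1 \quad \text{for each fixed } Q \in \calD_B^{tr}(\omega),
\end{equation*}
together with the symmetric statement obtained by swapping the roles of $Q$ and $R$. These two inequalities immediately yield the $\ell^2$-boundedness with constant controlled by the Schur constants.

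The key input I will extract is a local order-$m$ bound for $\mu$ available despite the absence of any such global hypothesis. If $Q \in \calD_B^{tr}(\omega)$, then $Q \not\subset T_\omega \cup H$, so there is a point $y_Q \in Q \setminus H$. Any ball $B(y_Q, r)$ with $r \gtrsim \ell(Q)$ contains the point $y_Q \notin H$, and therefore the contrapositive of assumption (2) of Theorem \ref{the big piece Tb} forces $\mu(B(y_Q,r)) < C_0 r^m$. Consequently, any ball $B \subset \R^n$ of radius $r \gtrsim \ell(Q)$ that contains $Q$ satisfies $\mu(B) \lesssim r^m$, and an identical statement holds around every transit $R \in \calD_0^{tr}$.

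With this in hand, fix $Q$ and stratify the $R$-sum by the relative scale $j \in \Z$ (so $\ell(R) = \delta^j \ell(Q)$) and by the annular shell $k \in \No$ in which $D(Q,R) \sim 2^k \max(\ell(Q), \ell(R))$. For fixed $(j,k)$ the participating cubes $R$ are pairwise disjoint (they lie in a single generation of $\calD_0$), and their union is contained in a ball of radius $C\, 2^k \max(\ell(Q), \ell(R))$ which also contains $Q$; hence the local order-$m$ bound of the previous paragraph gives
\begin{equation*}
\sum_{R \text{ at }(j,k)} \mu(R) \;\lesssim\; \bigl(2^k \max(\ell(Q), \ell(R))\bigr)^m.
\end{equation*}
Inserting this into the target sum, the contribution of $(j,k)$ is
\begin{equation*}
\frac{\ell(Q)^{s/2}\, \ell(R)^{s/2}}{\bigl(2^k \max(\ell(Q), \ell(R))\bigr)^{s}} \;\sim\; \delta^{|j|\, s/2}\, 2^{-ks},
\end{equation*}
and since $s > 0$ the resulting geometric series in both $k \ge 0$ and $j \in \Z$ are absolutely convergent, yielding the required $O(1)$ bound. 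The symmetric estimate is proved by the same scale-and-shell decomposition after exchanging $Q \leftrightarrow R$ and invoking the local order-$m$ bound around transit $R$.

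The main obstacle, and the only non-routine step, is precisely the second paragraph: establishing a usable order-$m$ control on $\mu$ in this non-homogeneous setting. Once this is extracted from the transit property together with assumption (2), the rest is a standard scale-and-shell summation essentially identical to \cite[Lemma 5.16]{ToBook}.
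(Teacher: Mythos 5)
Your proof is correct and follows the same route as the reference the paper cites for this lemma (Tolsa \cite[Lemma 5.16]{ToBook}): Schur's test with $\mu^{1/2}$ weights reduces matters to the row/column sums $\sum_R k(Q,R)\mu(R)\lesssim 1$, the transit property combined with assumption (2) of Theorem~\ref{the big piece Tb} yields the local order-$m$ growth bound around each transit cube, and a scale-and-shell decomposition then gives convergent geometric series. The details you fill in (extracting $y_Q\in Q\setminus H$, passing to a ball centered at $y_Q$, the pairwise disjointness within a fixed generation, and the $\delta^{|j|s/2}2^{-ks}$ bookkeeping) all check out.
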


Now we move on to the proof of Theorem \ref{the big piece Tb}.

\begin{proof}[Proof of Theorem \ref{the big piece Tb}] Let us first give an overview of the argument. The proof is based on the idea of \emph{suppression}. First, we suppress the square function and denote it by $\tcalC_\mu$, so that $\tcalC_{\mu} b \in L^\infty(\mu)$ and $\tcalC_\mu f(y)=\calC_\mu f(y)$ for every $f \in L^2(\mu)$ and $y$ in a set $G$ with $\mu(G) \gtrsim \mu(B)$.  Moreover, this set $G$ will be in a suitable (probabilistic) way outside of the sets $T_\omega \cup H$, in the complement of which $\mu$ is of order $m$ and $b$ is accretive.  The nice properties of the set $G$ combined with $\tcalC_\mu b \in L^{\infty}(\mu)$  allow us to run a $Tb$ argument and show that
$$
\|1_G\, \tcalC_\mu f\|_{L^2(\mu)} \lesssim \|f\|_{L^2(\mu)}, \quad   f \in L^2(\mu).
$$
Since $\tcalC_\mu f=\calC_\mu f$ in $G$, we find the  set  we were after.

Next we present the details of the proof and divide the argument into a few steps.

\subsection*{Suppression}

Let $\lambda_0>0$ be a big enough number to be specified later, and consider the set 
$$
S_0:=\{y \in B\colon \calC_\mu b (y)>\lambda_0\}.
$$ 
For every $y \in B$ define the numbers
$$
t(y):= \sup\{t>0: \calC_{\mu,t} b(y)>\lambda_0\}
$$
and 
$$
r(y):= \sup\{r>0: \mu(B(y,r)) \geq 11^m C_0\,r^m\},
$$
with the convention that supremum over the empty set is zero.
Since 
$$
\calC_{\mu,t}b(y) \lesssim \|b\|_{L^1(\mu)} \Big( \int_{\Gamma_t(y)} d(x,E)^{-2m} \ud \sigma(x) \Big)^\frac{1}{2}  \to 0,
$$
as $t \to \infty$, it is clear that $t(y)$ is  finite for every $y \in B$. Notice also that $r(y)$ is finite  because $\mu$ is finite.

Suppose $y \in S_0$ is such that $t(y) \geq r(y)$. By definition we have  
$$
\calC_{\mu,t(y)/2}b(y)  > \lambda_0.
$$ 
We claim that 
$$
\calC_{\mu,100t(y)}b(y) > \lambda_0/2
$$
if $\lambda_0$ is big enough, which follows from showing that
$$
\calC_{\mu, t(y)/2}^{100t(y)} b(y) \lesssim 1.
$$
If $x \in \Gamma_{t(y)/2}(y)$, then
\begin{equation}\label{annulus, transit}
\begin{split}
|T_\mu b(x)| \lesssim \int_E \frac{|b(z)|}{|x-z|^{m+\alpha}} \ud \mu (z)& \lesssim \int_E \frac{\ud \mu (z)}{(d(x,E)+|y-z|)^{m+\alpha}} \\
& \lesssim \sum_{k=0}^\infty \frac{\mu\big(B(y,2^kd(x,E))\big)}{(2^kd(x,E))^{m+\alpha}} \\
&\lesssim d(x,E)^{-\alpha}.
\end{split}
\end{equation}
In the last step it was important to observe that $d(x,E) \geq t(y)/2 \ge r(y)/2$ and 
$$
\mu(B(y,r)) \lesssim C_0 r^m
$$ 
holds for all $r \geq \frac{r(y)}{2}$.
Therefore Lemma \ref{whitney section of a cone} gives that
$$
\calC_{\mu, t(y)/2}^{100t(y)} b(y) \lesssim \sigma\big(\Gamma_{t(y)/2}^{100t(y)}(y)\big)^{\frac{1}{2}} \lesssim 1.
$$

Consider again some $y \in S_0$ such that $t(y) \geq r(y)$, and let $y' \in B_E(y, 10t(y))$. If $x \in \Gamma_{100t(y)}(y')$, then
\begin{equation*}
\begin{split}
|T_\mu  b(x)|  \lesssim \int_E \frac{\ud \mu (z)}{(d(x,E)+|y'-z|)^{m+\alpha}} 
& \sim \int_E \frac{\ud \mu (z)}{(d(x,E)+|y-z|)^{m+\alpha}} \\
& \lesssim d(x,E)^{-\alpha}
\end{split}
\end{equation*}
by Equation \eqref{annulus, transit}. Thus
\begin{equation*}
\begin{split}
|\calC_{\mu,100t(y)}b(y')-\calC_{\mu,100t(y)}b(y)|^2 
\lesssim \sigma\big(\Gamma_{100t(y)}(y')\Delta \Gamma_{100t(y)}(y)\big) \lesssim 1
\end{split}
\end{equation*}
by Lemma \ref{lem:geometric problem}. Hence we have shown that if $\lambda_0$ is large enough, then 
\begin{equation}\label{SF big near y}
B_E(y,10t(y)) \subset \{ y \in E \colon \calC_{\mu}b(y) > \lambda_0/4\}
\end{equation}
holds for all $y \in S_0$ with $t(y) \geq r(y)$.

Suppose then $y \in B$ and $r(y)>0$, and let $y' \in B_E(y,10r(y))$. Then it holds that
$$
\frac{\mu\big(B_E(y', 11r(y))\big)}{(11r(y))^m} \geq 11^{-m} \frac{\mu\big(B_E(y, r(y))\big)}{r(y)^m} \geq C_0,
$$
which shows that $B_E(y,10r(y)) \subset H$.

Now we define the suppression. Let $y \in S_0$. If $t(y) \geq r(y)$, we define
$$
A_y:=    \{x \in \Gamma(y): d(x,E) < 2t(y)\},
$$
and if $t(y)<r(y)$, we define
$$
A_y:=   \{x \in \Gamma(y): d(x,E) < r(y)\}.
$$
We also set
$$
A:=\bigcup_{y \in S_0} A_y.
$$
Since every $A_y$ is open, we see that $A$ is open and hence a Borel measurable subset of $\R^n \setminus E$. 
The suppressed kernel $\tilde{S}$ is defined as 
$$
\tilde{S}(x,y):=S(x,y)1_{\R^n \setminus A}(x).
$$
This is clearly a square function kernel satisfying the same size and $y$-H\"older conditions, and with it we define the corresponding operators $\tilde{T}_\mu$ and $\tcalC_\mu$. From the definition it follows that
\begin{equation}\label{formula for suppressed}
\tcalC_\mu f(y)=\Big(\int_{\Gamma(y) \setminus A} |T_\mu f(x)|^2 d(x,E)^{2\alpha} \ud \sigma(x) \Big)^{\frac{1}{2}}
\end{equation}
for all $f \in \bigcup_{p \in [1,\infty]}L^p(\mu)$ and $y \in E$.

Next we verify the relevant properties of the suppressed operator. To this end, define the exceptional set 
$$
S:=\bigcup_{y \in S_0} B_E\big(y,10\max (t(y),r(y))\big).
$$
Because $B_E(y,10r(y)) \subset H$ for every $y$ such that $r(y)>0$, it holds that
$$
S \setminus H= \bigcup_{\begin{substack}{y \in S_0 \\ t(y) \geq r(y)}\end{substack}} B_E(y,10t(y))  \setminus H \subset  \{ y \in E \setminus H \colon \calC_{\mu}b(y) > \lambda_0/4\}
$$
by Equation \eqref{SF big near y}. This implies by the weak type assumption (3) in Theorem \ref{the big piece Tb} that
\begin{equation*}
\mu(S \setminus H) \leq  \frac{C_1 4^s}{\lambda_0^s}  \mu(B) \leq \frac{1-\delta_0}{2}\mu(B)
\end{equation*}
if $\lambda_0$ is again big enough. We now fix a $\lambda_0$ that satisfies all the above properties, whence
\begin{equation}\label{size of exceptional}
\mu(H \cup T_\omega \cup S) \leq \mu(T_\omega \cup H)+ \mu(S \setminus H) \leq \frac{1+\delta_0}{2}\mu(B).
\end{equation}

 We claim that 
\begin{equation}\label{suppressed bounded}
\tilde{\calC}_\mu b(y) \leq \lambda_0, \quad  y \in B.
\end{equation}
Indeed, it is clear from \eqref{formula for suppressed} that 
$$
\tcalC_\mu f (y) \leq \calC_{\mu} f(y)
$$
for every $f \in \bigcup_{p \in [1,\infty]}L^p(\mu)$ and $y \in B$. Thus, by the definition of $S_0$ we have $\tcalC_\mu b(y) \leq \lambda_0$ for every $y \in B\setminus S_0$, while, if $y \in S_0$,
$$
\tcalC_\mu b(y) \leq \Big(\int_{\Gamma(y)\setminus A_y} |T_\mu b(x)|^2 d(x,E)^{2\alpha} \ud \sigma(x) \Big)^{\frac{1}{2}} \leq \lambda_0
$$
by the definitions of $A_y$ and $t(y)$.

We shall now prove that for every $f$ we have
\begin{equation}\label{outside equal}
\tilde{\calC}_\mu f(y)=\calC_\mu f(y), \quad   y \in B\setminus S.
\end{equation} 
This follows from showing that $\Gamma(y) \cap A = \emptyset$ for every $y \in B \setminus S$. To get a contradiction, suppose there exist $y \in B \setminus S$ and $x \in \Gamma(y) \cap A$. Thus, there is  $y' \in S_0$ so that $x \in A_{y'}$. Suppose first $t(y') \geq r(y')$. Then the definition of $A_{y'}$ implies that
$$
|y-y'| \leq |y-x|+|x-y'| \leq 4 d(x,E) < 8 t(y'),
$$
which is a contradiction because $y \not \in B_E(y', 10t(y'))$. Similarly we arrive at a contradiction in the case $t(y')<r(y')$. Hence we have shown that \eqref{outside equal} holds. 

We remark here that \eqref{suppressed bounded} and \eqref{outside equal}  are still valid if we replace $\tcalC_\mu$ by $\tcalC_{\mu,s}^t$ and $\calC_\mu$ by $\calC_{\mu,s}^t$, $0<s<t$.

\subsection*{The set $G$}
Fix two numbers  $k,l \in \Z, k < l,$ and write $s=\delta^l$, $t=\delta^k$. Assume $l$ is so big that $s:= \delta^l < \ell(Q_B(\omega))$ for every $\omega \in \Omega$. We consider the truncated square function $\calC_{\mu,s}^t$ and prove  the required estimate \eqref{bounded in big piece}  for this truncated operator with a bound that is independent on $s$ and $t$. This will then finish the proof. Now that the parameter $s$ is fixed, we also we fix the probability space $\Omega^l_{\log_\delta \ell(Q_{B}(\omega)) } =:\tOmega$ (see Section \ref{preliminaries}).

For every $y \in B$ define 
$$
p_0(y):= \mathbb{P}\big(\{ \omega \in \tOmega \colon y \in B \setminus ( H \cup T_\omega \cup S)\}\big).
$$
The set $G$ that we are after is defined as 
$$
G:= \{ y \in B \colon p_0(y)>\tau\}
$$
with some $\tau>0$ that will be specified soon. In other words, the set $G$ consists of those points in $B$ that have quantitatively big probability of being outside the sets $H \cup T_\omega \cup S$.

Note that since the probability space $\tOmega$ consists of only finitely many points, there is no problem with measurability when defining $p_0$ and $G$.

To estimate from below the measure of $G$, note that by Fubini and \eqref{size of exceptional},
\begin{equation*}
\begin{split}
\int_B p_0(y) \ud \mu(y) &= \int_B \int_{\tOmega}  1_{\{(\omega,y) \in \tOmega \times E \colon y \in B \setminus (H \cup T_\omega \cup S)\}} (y,\omega) \ud \mathbb{P}(\omega) \ud \mu(y) \\
&=\int_{\tOmega} \int_B   1_{\{(\omega,y) \in \tOmega \times E \colon y \in B \setminus (H \cup T_\omega \cup S)\}} (y,\omega)  \ud \mu(y) \ud \mathbb{P}(\omega) \\
&= \int_{\tOmega} \mu(B \setminus (H \cup T_\omega \cup S)) \,d\mathbb{P}(\omega) \\
&\geq \frac{1-\delta_0}{2}\mu(B).
\end{split}
\end{equation*}
On the other hand,
$$
\int_{B \setminus G} p_0(y) \ud \mu(y) \leq \tau \mu(B).
$$
Therefore, since $p_0(y) \leq 1$ for all $y \in B$,  we have that
$$
\frac{1-\delta_0}{2} \mu(B) \leq \int_B p_0(y) \ud \mu(y) \leq \tau \mu(B) + \mu(G).
$$
If we set $\tau:= \frac{1-\delta_0}{6}$ we infer that $\frac{1-\delta_0}{3} \mu(B) \leq \mu(G)$.

Notice also that if $h \colon E \to [0,\infty)$ is a Borel function,  then
\begin{equation}\label{property of G}
\begin{split}
\int_G h(y) \ud \mu(y)  &\leq \tau^{-1} \int_G h(y) p_0(y) \ud \mu (y) \\
&= \tau^{-1} \int_{\tOmega} \int _{G \setminus (H \cup T_\omega \cup S)} h(y) \ud  \mu(y) \ud \mathbb{P}(\omega) \\
&=\tau^{-1} \E_{\omega} \int _{G \setminus (H \cup T_\omega \cup S)} h(y) \ud  \mu(y),
\end{split}
\end{equation}
where in the last line we just denoted the integral over $\tOmega$ by $\E_\omega$. 

Moreover, since $G \subset B \setminus S$, it holds that 
$$
\| 1_G \,\calC_{\mu,s}^t f\|_{L^2(\mu)}^2 = \| 1_G\, \tcalC_{\mu,s}^tf\|_{L^2(\mu)}^2
$$
for all $f \in L^2(\mu)$.  For the rest of the proof  we fix a function $f \in L^2(\mu)$ and show that
$$
\| 1_G\, \tcalC_{\mu,s}^t f \|_{L^2(\mu)} \lesssim \| f \|_{L^2(\mu)}.
$$

Let $\omega \in \tOmega$ and note that $Q_B(\omega)$ must be a transit cube because $T_\omega \cup H \subsetneq B \subset Q_B(\omega)$. Hence
$$
\Big| \frac{\langle f \rangle_{Q_B(\omega)}}{\langle b \rangle_{Q_B(\omega)}}\Big| \big\| 1_G \tcalC_\mu  ( b 1_{Q_B(\omega)}) \big\|_{L^2(\mu)}
\lesssim | \langle f \rangle _{Q_B(\omega)} | \mu (B)^\frac{1}{2} \lesssim \| f \|_{L^2(\mu)},
$$
where we used the fact that $\tcalC_\mu b \in L^\infty(\mu)$. Thus, when we represent the function $f$ with martingale differences below, we may suppose that
$\int f \,d\mu = 0$.

\subsection*{A probabilistic reduction}
Here we make a certain probabilistic argument that corresponds to the reduction into good Whitney regions in \cite{MM1}.   Let  $\omega \in \tOmega$. Using  \eqref{property of G} we have
\begin{equation}\label{using G and division}
\begin{split}
 \|&1_G \tcalC^t_{\mu,s}f(y) \|_{L^2(\mu)}^2  
\\  & \leq \tau^{-1}  \E_\omega \int_{G \setminus (H \cup T_\omega \cup S)} \sum_{\begin{substack}{R \in \calD_{0} \\ s < \ell(R) \leq t}\end{substack}} 1_R(y) \int_{ \Gamma_{\ell(R)\delta}^{\ell(R)}(y)}|\widetilde{T}_\mu f(x)|^2 d(x,E)^{2\alpha} \ud \sigma(x) \ud \mu(y).
\end{split}
\end{equation}

Consider some big enough goodness parameter $r$ as in Lemma  \ref{probability of badness}, and recall the bound $\delta^{\gamma r \eta}$ for the probability of badness.  The sum over the dyadic cubes $R$ in \eqref{using G and division} may be divided into $\calD_B(\omega)$-good and -bad cubes. The corresponding term with bad cubes only satisfies
\begin{equation*}
\begin{split}
\E_\omega & \int_{G \setminus (H \cup T_\omega \cup S)} \sum_{\begin{substack}{R \in \calD_{0} \\ s < \ell(R) \leq t \\ R \text { is } \calD_B(\omega) \text{-bad}}\end{substack}} 1_R(y) \int_{ \Gamma_{\ell(R)\delta}^{\ell(R)}(y)}|\widetilde{T}_\mu f(x)|^2 d(x,E)^{2\alpha} \ud \sigma(x) \ud \mu(y) \\
& \leq \int_G \! \! \! \!\sum_{\begin{substack}{R \in \calD_{0} \\ s < \ell(R) \leq t}\end{substack}} \! \! \! \! \E_\omega 1_{\{\omega \in \tOmega \colon R \text{ is } \calD_B(\omega) \text{-bad}\}}(\omega) 1_R(y)\int_{ \Gamma_{\ell(R)\delta}^{\ell(R)}(y)} \! \! \! |\widetilde{T}_\mu f(x)|^2 d(x,E)^{2\alpha} \ud \sigma(x) \ud \mu(y) \\
& \lesssim \delta^{\gamma r \eta} \int_G \sum_{\begin{substack}{R \in \calD_{0} \\ s < \ell(R) \leq t}\end{substack}} 1_R(y) \int_{ \Gamma_{\ell(R)\delta}^{\ell(R)}(y)}|\widetilde{T}_\mu f(x)|^2 d(x,E)^{2\alpha} \ud \sigma(x) \ud \mu(y) \\
& = \delta^{\gamma r \eta} \|1_G \tcalC^t_{\mu,s}f\|_{L^2(\mu)}^2.
\end{split}
\end{equation*}
Hence, letting the goodness parameter $r$ to be big enough, we get
\begin{equation}\label{reduced to good}
\begin{split}
\|1_G &\tcalC^t_{\mu,s}f\|_{L^2(\mu)}^2  \\
&\lesssim \E_\omega \int_{G \setminus (H \cup T_\omega \cup S)} \sum_{\begin{substack}{R \in \calD_0 \\ s < \ell(R) \leq t \\ R \text{ is } \calD_B(\omega) \text{-good}}\end{substack}} 1_R(y) \int_{ \Gamma_{\ell(R)\delta}^{\ell(R)}(y)}|\widetilde{T}_\mu f(x)|^2 d(x,E)^{2\alpha} \ud \sigma(x) \ud \mu(y)\\
& \leq \E_\omega \int_{B} \sum_{\begin{substack}{R \in \calD_0 \\ s < \ell(R) \leq t \\ R \text{ is } \calD_B(\omega) \text{-good} \\ R \not \subset T_\omega \cup H}\end{substack}} 1_R(y) \int_{ \Gamma_{\ell(R)\delta}^{\ell(R)}(y)}|\widetilde{T}_\mu f(x)|^2 d(x,E)^{2\alpha} \ud \sigma(x) \ud \mu(y).
\end{split}
\end{equation}

\subsection*{A $Tb$-argument}
Fix some random parameter $\omega \in \tOmega$. Now that $\omega$ is fixed, we denote by $\calD^{tr}_0$ the collection of $\omega$-transit cubes in $\calD_0$.  By \eqref{reduced to good} it is enough prove
\begin{equation}\label{fixed omega}
\Big(\int_{B} \sum_{\begin{substack}{R \in \calD_0^{tr} \\ s<\ell(R) \leq t \\ R \text{ is } \calD_B(\omega) \text{-good} }\end{substack}} 1_R(y) \ell(R)^{2\alpha} \int_{ \Gamma_{\ell(R)\delta}^{\ell(R)}(y)}|\widetilde{T}_\mu f(x)|^2  \ud \sigma(x) \ud \mu(y) \Big)^\frac{1}{2}
\lesssim \|f\|_{L^2(\mu)},
\end{equation}
where we noticed that $d(x,E) \sim \ell(R)$ for every $x \in \Gamma_{\ell(R)\delta}^{\ell(R)}(y)$. 

Let us abbreviate 
$$
\calD':= \{R \in \calD_0^{tr} \colon s<\ell(R) \leq t , R \cap B \not= \emptyset, R \text{ is } \calD_B(\omega) \text{-good}\},
$$
whence we may replace the sum over the cubes $R$ in \eqref{fixed omega} with the sum over $R \in \calD'.$
Using  martingale differences we split the function $f$ as $f=\sum_{Q \in \calD^{tr}_B(\omega)}\Delta_Q f$. Then the estimate \eqref{fixed omega} is split into  four pieces according to the relative positions of the cubes $Q$ and $R$. For $R \in \calD'$ define the following collections: 
\begin{enumerate}
\item $\scrD_1(R):=\{ Q \in \calD^{tr}_B(\omega)\colon \ell(Q) <\delta\ell(R)\}$;
\item $\scrD_2(R):=\{Q \in \calD_B^{tr}(\omega)\colon \ell(Q) \geq \delta \ell(R) \text{ and } d(Q,R) >\ell(R)^\gamma\ell(Q)^{1-\gamma}\}$;
\item $\scrD_3(R):=\{Q \in \calD^{tr}_B(\omega)\colon \delta \ell(R) \leq \ell(Q) \leq \delta^{-r}\ell(R) \text{ and } d(Q,R) \leq \ell(R)^\gamma \ell(Q)^{1-\gamma}\}$;
\item $\scrD_4(R):= \{ Q \in \calD_B^{tr}(\omega)\colon \ell(Q) > \delta^{-r}\ell(R) \text{ and } d(Q,R) \leq \ell(R)^\gamma \ell(Q)^{1-\gamma}\}$. 
\end{enumerate} 
Next, we record with proof a few preliminary (and completely standard) estimates related to these collections. For these, recall the coefficients $A^s_{Q,R}$ from Lemma
\ref{matrix lemma}.
\begin{lem}
If $R \in \calD'$ and $Q \in \scrD_1(R)$, then for $y \in R$ and $x \in \Gamma_{\delta\ell(R)}^{\ell(R)}(y)$ we have
\begin{equation*}
\ell(R)^\alpha\big|\widetilde{T}_\mu \Delta_Q f (x)\big|
\lesssim A^\beta_{Q,R} \mu(R)^{-\frac{1}{2}} \|\Delta_Q f \|_{L^2(\mu)}.
\end{equation*}
\end{lem}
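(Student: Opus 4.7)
The plan is to exploit the mean-zero property $\int_Q \Delta_Q f \ud \mu = 0$ (which holds because $Q$ is transit) and the $y$-H\"older estimate of the kernel $\widetilde{S}$. Writing
$$
\widetilde{T}_\mu \Delta_Q f(x) = \int_Q \bigl[ \widetilde{S}(x,z) - \widetilde{S}(x,c_Q) \bigr] \Delta_Q f(z) \ud \mu(z),
$$
and noting that the suppression only inserts the $z$-independent factor $1_{\R^n \setminus A}(x)$, the $y$-H\"older estimate transfers from $S$ to $\widetilde{S}$ with the same constant. For $z \in Q$ we have $|z - c_Q| \lesssim \ell(Q) < \delta \ell(R)$, while for $x \in \Gamma_{\delta\ell(R)}^{\ell(R)}(y)$ we have $|x - c_Q| \gtrsim d(x, E) > \delta \ell(R)$; with $\delta = 1/1000$, the admissibility $|z - c_Q| \leq |x - c_Q|/2$ needed to invoke the H\"older estimate holds (possibly after an elementary iteration along a short chain of intermediate points if the ratio of implicit constants demands it). Thus
$$
\bigl| \widetilde{S}(x,z) - \widetilde{S}(x, c_Q) \bigr| \lesssim \frac{\ell(Q)^\beta}{|x - c_Q|^{m+\alpha+\beta}}.
$$

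Next I would convert the denominator into $D(Q,R)$ using Lemma \ref{distance to a point in E}: $|x - c_Q| \sim |x - y| + |y - c_Q|$, and since $|x - y| \sim d(x, E) \sim \ell(R)$ for $x \in \Gamma_{\delta\ell(R)}^{\ell(R)}(y)$ and $|y - c_Q| \geq d(R, Q)$ for $y \in R$, combined with $\ell(Q) < \ell(R)$, this yields
$$
|x - c_Q| \gtrsim \ell(Q) + \ell(R) + d(R, Q) = D(Q, R).
$$
Integrating in $z$ and applying Cauchy--Schwarz to $\int_Q |\Delta_Q f(z)| \ud \mu(z) \leq \mu(Q)^{1/2} \|\Delta_Q f\|_{L^2(\mu)}$ gives
$$
|\widetilde{T}_\mu \Delta_Q f(x)| \lesssim \frac{\ell(Q)^\beta}{D(Q, R)^{m+\alpha+\beta}} \mu(Q)^{1/2} \|\Delta_Q f\|_{L^2(\mu)}.
$$

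Finally, after multiplying by $\ell(R)^\alpha$, comparison with the target
$$
A^\beta_{Q,R}\, \mu(R)^{-1/2} \|\Delta_Q f\|_{L^2(\mu)} = \frac{\ell(Q)^{\beta/2} \ell(R)^{\beta/2}}{D(Q,R)^{m+\beta}} \mu(Q)^{1/2} \|\Delta_Q f\|_{L^2(\mu)}
$$
reduces to the numerical inequality $\ell(R)^{\alpha - \beta/2} \ell(Q)^{\beta/2} \lesssim D(Q,R)^\alpha$, which follows at once from $\ell(Q) \leq \ell(R) \leq D(Q,R)$: one bounds the left side by $\ell(R)^\alpha$ and then by $D(Q,R)^\alpha$. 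The only point of care in the whole argument is the verification of the H\"older admissibility condition with the fixed value of $\delta$; once this bookkeeping is settled, no real obstacle arises, since $\scrD_1$ is the easy regime where $Q$ is much smaller than the Whitney scale $\ell(R)$ and the cancellation of $\Delta_Q f$ provides ample room.
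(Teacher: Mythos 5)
Your proposal follows the paper's own argument almost line by line: exploit $\int_Q \Delta_Q f \ud\mu = 0$, apply the $y$-H\"older estimate to $\wt S(x,z)-\wt S(x,c_Q)$ (which inherits it from $S$ because the suppression only multiplies by $1_{\R^n\setminus A}(x)$), convert $|x-c_Q|$ into $D(Q,R)$, and finish with Cauchy--Schwarz and the elementary comparison $\ell(R)^{\alpha-\beta/2}\ell(Q)^{\beta/2}\le\ell(R)^\alpha\le D(Q,R)^\alpha$. The only place you hedged is unnecessary: since sidelengths are powers of $\delta=1/1000$, the condition $\ell(Q)<\delta\ell(R)$ in $\scrD_1(R)$ forces $\ell(Q)\le\delta^2\ell(R)$, and with $\diam(Q)\le 12\ell(Q)$ one gets
$$d(x,Q)\ge d(x,E)\ge\delta\ell(R)\ge\delta^{-1}\ell(Q)>2\diam(Q),$$
so $|z-c_Q|\le\diam(Q)\le |x-z|/2$ holds directly, with no need for a chain of intermediate points; this is exactly the first line of the paper's proof.
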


\begin{proof} 
Note that in this case it holds that
$$
d(x,Q) \geq d(x,E) \geq \delta \ell(R) > 24 \ell(Q) \geq 2\diam (Q).
$$ 
Since $\Delta_Q f$ has integral zero, we can estimate
\begin{equation*}
\begin{split}
\ell(R)^\alpha\big|\widetilde{T}_\mu \Delta_Q f (x)\big| &=\ell(R)^\alpha\Big| \int_Q \big( \tilde S(x,z)-\tilde S(x,c_Q)\big) \Delta_Q f(z) \ud \mu(z) \Big| \\
& \lesssim \ell(R)^{\alpha}\frac{\diam(Q)^\beta}{d(x,Q)^{m+\alpha+\beta}} \|\Delta_Q f\|_{L^1(\mu)} \\
&\sim \frac{\ell(R)^\alpha\ell(Q)^\beta}{\big(|x-y|+d(y,Q)\big)^{m+\alpha+\beta}} \|\Delta_Q f\|_{L^1(\mu)} \\
&\lesssim \frac{\ell(Q)^{\frac{\beta}{2}}\ell(R)^{\frac{\beta}{2}}}{\big(\ell(Q)+\ell(R)+d(Q,R)\big)^{m+\beta}} \mu(Q)^{\frac{1}{2}} \|\Delta_Q f \|_{L^2(\mu)}. 
\end{split}
\end{equation*}
\end{proof}

\begin{lem}
If $R \in \calD'$ and  $Q \in\scrD_2(R)$, then for $y \in R$ and $x \in \Gamma_{\delta\ell(R)}^{\ell(R)}(y)$ we have
$$
\ell(R)^\alpha\big|\widetilde{T}_\mu \Delta_Q f (x)\big| \lesssim A^\alpha_{Q,R} \mu(R)^{-\frac{1}{2}} \|\Delta_Q f \|_{L^2(\mu)}.
$$
\end{lem}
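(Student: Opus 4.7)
The plan is to use only the size estimate on the kernel (no Hölder cancellation), exploiting the explicit separation that defines $\scrD_2(R)$. Fix $R \in \calD'$, $Q \in \scrD_2(R)$, $y \in R$, and $x \in \Gamma_{\delta\ell(R)}^{\ell(R)}(y)$. From $|\tilde S(x,z)| \leq |S(x,z)| \lesssim |x-z|^{-(m+\alpha)}$, Lemma \ref{distance to a point in E}, and $y \in R \subset E$, we get $|x-z| \sim |x-y| + |y-z| \geq |y-z|$ for every $z \in Q$. Combining this with the definition $d(Q,R) > \ell(R)^\gamma \ell(Q)^{1-\gamma}$ yields
$$
|x-z| \gtrsim d(R,Q) \gtrsim \ell(R)^\gamma \ell(Q)^{1-\gamma} = (\ell(R)/\ell(Q))^\gamma \ell(Q)
$$
for all $z \in Q$.

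The key step is to upgrade this to $|x-z| \gtrsim (\ell(R)/\ell(Q))^\gamma D(Q,R)$ by a short case analysis. Since $\ell(Q) \geq \delta \ell(R)$, we have $D(Q,R) \lesssim \ell(Q) + d(Q,R)$. If $d(Q,R) \lesssim \ell(Q)$, then $D(Q,R) \sim \ell(Q)$ and the bound above is exactly what we want; if $d(Q,R) \gg \ell(Q)$, then $D(Q,R) \sim d(Q,R)$ and the bound $|x-z| \gtrsim d(R,Q)$ is even stronger since $(\ell(R)/\ell(Q))^\gamma \leq \delta^{-\gamma}$. Raising to the $(m+\alpha)$-th power and using the identity $\gamma(m+\alpha) = \alpha/2$ built into the definition of $\gamma$, we obtain
$$
|x-z|^{m+\alpha} \gtrsim (\ell(R)/\ell(Q))^{\alpha/2}\, D(Q,R)^{m+\alpha}.
$$

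The rest is now routine. Integrating the size estimate over $Q$ and applying Cauchy--Schwarz via $\|\Delta_Q f\|_{L^1(\mu)} \leq \mu(Q)^{1/2} \|\Delta_Q f\|_{L^2(\mu)}$ gives
$$
|\tilde T_\mu \Delta_Q f(x)| \lesssim \frac{(\ell(Q)/\ell(R))^{\alpha/2}}{D(Q,R)^{m+\alpha}} \, \mu(Q)^{1/2} \|\Delta_Q f\|_{L^2(\mu)},
$$
and multiplying by $\ell(R)^\alpha$ reassembles exactly $A_{Q,R}^{\alpha}\, \mu(R)^{-1/2} \|\Delta_Q f\|_{L^2(\mu)}$ on the right, as desired. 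The only nontrivial point is the interplay between the separation exponent $\gamma$ and the kernel exponent $\alpha$, which is precisely why $\gamma$ was defined as $\alpha/(2(m+\alpha))$; once this is unpacked the estimate is immediate from the size bound alone, with no appeal to the Hölder regularity of $S$.
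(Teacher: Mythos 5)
Your proof is correct and follows essentially the same route as the paper: both use only the kernel size estimate (no Hölder cancellation), exploit the separation $d(Q,R)>\ell(R)^\gamma\ell(Q)^{1-\gamma}$ together with the identity $\gamma(m+\alpha)=\alpha/2$, split into the cases $d(Q,R)\lesssim\ell(Q)$ and $d(Q,R)\gg\ell(Q)$ to identify the dominant term in $D(Q,R)$, and finish with Cauchy--Schwarz on $\Delta_Q f$. The only cosmetic difference is that you phrase the two cases as a single pointwise lower bound $|x-z|\gtrsim(\ell(R)/\ell(Q))^\gamma D(Q,R)$ before raising to the power $m+\alpha$, whereas the paper first writes $d(x,Q)\gtrsim \ell(R)+d(Q,R)$ and then runs the case analysis on the resulting ratio of lengths; the underlying content is identical.
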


\begin{proof}

First do the direct estimate
\begin{equation*}
\begin{split}
\ell(R)^{\alpha} \big| \widetilde{T}_\mu \Delta_Q f (x)\big| & \lesssim \frac{\ell(R)^\alpha\|\Delta_Q f\|_{L^1(\mu)}}{d(x,Q)^{m+\alpha}} \lesssim 
\frac{\ell(R)^\alpha\|\Delta_Q f \|_{L^1(\mu)}}{\big(\ell(R)+d(Q,R)\big)^{m+\alpha}}.
\end{split}
\end{equation*}

If $\ell(Q)< d(Q,R)$, then $d(Q,R) \sim D(Q,R)$, and accordingly
$$
\frac{\ell(R)^\alpha}{\big(\ell(R) +d(Q,R)\big)^{m+\alpha}} \lesssim  \frac{\ell(Q)^{\frac{\alpha}{2}}\ell(R)^{\frac{\alpha}{2}}}{D(Q,R)^{m+\alpha}}.
$$
On the other hand if $\ell(Q)\geq d(Q,R)$, then
\begin{equation*}
\begin{split}
\frac{\ell(R)^\alpha}{\big( \ell(R) +d(Q,R)\big)^{m+\alpha}} \leq \frac{\ell(R)^\alpha}{\big(\ell(R)^\gamma \ell(Q)^{1-\gamma}\big)^{m+\alpha}} 
&= \frac{\ell(R)^{\alpha-\gamma(m+\alpha)} \ell(Q)^{\gamma(m+\alpha)}}{\ell(Q)^{m+\alpha}} \\ 
&\sim \frac{\ell(R)^{\frac{\alpha}{2}} \ell(Q)^{\frac{\alpha}{2}}}{D(Q,R)^{m+\alpha}},
\end{split}
\end{equation*}
where we took into account that $\gamma (m+\alpha)= \frac{\alpha}{2}$. Combining these estimates proves the claim.
\end{proof}

The proof of the next lemma is just a direct application of the kernel size estimate.

\begin{lem}
If $R \in \calD'$ and  $Q \in \scrD_3(R)$, then for any $y \in E$ and $x \in \Gamma_{\delta\ell(R)}^{\ell(R)}(y)$
there holds that
\begin{equation*}
\begin{split}
\ell(R)^\alpha \big| \wtT_\mu \Delta_Q f(x) \big| \lesssim \frac{\mu(Q)^\frac{1}{2} \|\Delta_Q f\|_{L^2(\mu)}}{\ell(R)^{m}}  \sim A^\alpha_{Q,R} \mu(R)^{-\frac{1}{2}} \|\Delta_Q f\|_{L^2(\mu)}.
\end{split}
\end{equation*}

\end{lem}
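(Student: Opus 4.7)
The plan is to use only the kernel size estimate: for $Q \in \scrD_3(R)$ the cubes are comparable in size and nearby, so the H\"older--cancellation trick exploited for $\scrD_1(R)$ and $\scrD_2(R)$ offers no gain. The key geometric input is that $x \in \Gamma_{\delta\ell(R)}^{\ell(R)}(y)$ forces $d(x,E) > \delta\ell(R)$, and hence every $z \in Q \subset E$ satisfies $|x-z| \geq d(x,E) > \delta\ell(R)$.

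Armed with this, the bound $|\wtT_\mu \Delta_Q f(x)| \leq \int_Q |x-z|^{-(m+\alpha)} |\Delta_Q f(z)| \ud\mu(z)$ combined with $|x-z| \gtrsim \ell(R)$ yields
\begin{equation*}
|\wtT_\mu \Delta_Q f(x)| \lesssim \frac{\|\Delta_Q f\|_{L^1(\mu)}}{\ell(R)^{m+\alpha}}.
\end{equation*}
Cauchy--Schwarz gives $\|\Delta_Q f\|_{L^1(\mu)} \leq \mu(Q)^{1/2}\|\Delta_Q f\|_{L^2(\mu)}$, and multiplying through by $\ell(R)^\alpha$ produces the first inequality in the statement. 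Note that no pointwise size bound on $\mu(Q)$ is needed here; only $\|\Delta_Q f\|_{L^1(\mu)}$ enters.

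For the equivalence with $A^\alpha_{Q,R}\mu(R)^{-1/2}\|\Delta_Q f\|_{L^2(\mu)}$, it suffices to verify
\begin{equation*}
\frac{1}{\ell(R)^m} \sim \frac{\ell(Q)^{\alpha/2}\ell(R)^{\alpha/2}}{D(Q,R)^{m+\alpha}}.
\end{equation*}
The defining conditions of $\scrD_3(R)$ give $\delta\ell(R) \leq \ell(Q) \leq \delta^{-r}\ell(R)$ and $d(Q,R) \leq \ell(R)^\gamma\ell(Q)^{1-\gamma} \lesssim \ell(R)$, so $\ell(Q) \sim \ell(R)$ and $D(Q,R) = \ell(Q)+\ell(R)+d(Q,R) \sim \ell(R)$, with constants depending only on the fixed parameters $r,\gamma,\delta$. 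Both sides then collapse to $\ell(R)^{-m}$, finishing the equivalence.

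No serious obstacle arises; the proof is essentially bookkeeping. The only point of care is that the implicit constants acquire a dependence on the goodness parameter $r$ through the inequality $\ell(Q) \leq \delta^{-r}\ell(R)$, but $r$ has already been fixed earlier in the $Tb$-argument, so this dependence is harmless.
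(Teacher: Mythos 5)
Your proof is correct and matches the paper's (sketched) argument: the paper simply remarks that this lemma is ``a direct application of the kernel size estimate,'' and your write-up supplies precisely the expected details --- the pointwise bound $|x-z|\ge d(x,E)>\delta\ell(R)$, the size estimate for the suppressed kernel (which inherits the same size bound as $S$), Cauchy--Schwarz on $Q$, and the comparability $\ell(Q)\sim_r\ell(R)$, $D(Q,R)\sim_r\ell(R)$ coming from the defining conditions of $\scrD_3(R)$.
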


Using the four collections the left hand side of \eqref{fixed omega} satisfies 
$
LHS\eqref{fixed omega} \leq \sum_{i=1}^4 \Lambda_i,
$
where 
\begin{equation*}
\Lambda_i:= \Big(\int_{B} \sum_{R \in \calD'} 1_R(y) \ell(R)^{2\alpha} \int_{ \Gamma_{l(R)\delta}^{l(R)}(y)}\big|\widetilde{T}_\mu \sum_{Q \in \scrD_i(R)} \Delta_Q f(x)\big|^2  \ud \sigma(x) \ud \mu(y) \Big)^\frac{1}{2}.
\end{equation*}
Since $\sigma\big( \Gamma^{\ell(R)}_{\delta \ell(R)} (y)\big) \lesssim 1$ for every $y \in E$ and $R \in \calD'$, from the lemmas above it is seen that
\begin{equation*}
\begin{split}
\Lambda_1+\Lambda_2 + \Lambda_3 &\lesssim \Big( \sum_{R \in \calD'} \Big[ \sum_{Q \in \calD^{tr}_B(\omega)} A^\beta_{Q,R} \|\Delta_Q f \|_{L^2(\mu)} \Big]^2 \Big)^\frac{1}{2}  \\
&+\Big( \sum_{R \in \calD'} \Big[ \sum_{Q \in \calD^{tr}_B(\omega)} A^\alpha_{Q,R} \|\Delta_Q f \|_{L^2(\mu)} \Big]^2 \Big)^\frac{1}{2} 
\lesssim \|f\|_{L^2(\mu)},
\end{split}
\end{equation*}
where we applied Lemma \ref{matrix lemma} and Equation \eqref{norm with martingales} in the last step.

It only remains to estimate $\Lambda_4$.
Suppose $R \in \calD'$ and $k \in \Z$, $k \ge r$, are such that $\delta^{-k}\ell(R)  \leq \ell(Q_B(\omega))$. Since $R \cap Q_B(\omega) \not= \emptyset$ (as $R \cap B \ne \emptyset$ by the definition of $\calD'$), there exists a cube $Q \in \calD_B(\omega)$ with $\ell(Q)=\delta^{-k}\ell(R)$ so that $R \cap Q \not= \emptyset$. Then, because $R$ is $\calD_B(\omega)$-good, $k \geq r$ and $d(R,Q) = 0 \leq \ell(R)^\gamma \ell(Q)^{1-\gamma}$, it must be that $d(R,E \setminus Q) > \ell(R)^\gamma \ell(Q)^{1-\gamma}$, and in particular $R \subset Q$. We denote this unique cube $Q \in \calD_B(\omega)$ by $Q(R,k)$. Notice that
$Q(R,k)$ is transit, since $R$ is.

Note that $\scrD_4(R)$ can be non-empty only for those $R \in \calD'$ such that $\ell(R) < \delta^r \ell(Q_B(\omega))$, and
that in this case we have (using the fact that $R$ is good)
\begin{equation}\label{collection 4}
\scrD_4(R) = \Big\{Q(R,k) \colon r<k \leq \log_\delta \frac{\ell(R)}{\ell(Q_B(\omega))}\Big\}.
\end{equation}
If $R \in \calD'$ and $Q \in \scrD_4(R)$, then $Q = Q(R,k)$ for some $k > r$. For notational convenience, we denote $Q_R = Q(R, k-1)$ (which exists since $k-1 \ge r$). In other words, 
$Q_R$ is the unique child $Q' \in ch(Q)$ that still contains $R$.
\begin{lem}\label{part of IV}

Let $R \in \calD'$ and $Q \in \scrD_4(R)$. Then for $y \in R$ and $x \in \Gamma_{\delta\ell(R)}^{\ell(R)}(y)$ we have
\begin{equation*}
\ell(R)^\alpha \big| \wtT_\mu(1_{Q \setminus Q_R} \Delta_{Q} f)(x) \big| 
\lesssim A^\alpha_{Q,R} \mu(R)^{-\frac{1}{2}} \|\Delta_{Q} f \|_{L^2(\mu)}.
\end{equation*}

\end{lem}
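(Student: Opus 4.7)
The strategy is to convert the hypothesis $Q \in \scrD_4(R)$ into a genuine separation between $R$ and the support $Q \setminus Q_R$ of the integrand, so that the pointwise size estimate on $\wtT_\mu$ yields the required decay. Specifically, since $Q = Q(R,k)$ with $k > r$, the parent-like cube $Q_R = Q(R,k-1) \in \calD_B(\omega)$ still has sidelength $\ell(Q_R) = \delta \ell(Q) \geq \delta^{-r} \ell(R)$. Because $R$ is $\calD_B(\omega)$-good and $R \subset Q_R$ (so $d(R,Q_R) = 0$), goodness forces
\[
d(R, E \setminus Q_R) \geq \ell(R)^\gamma \ell(Q_R)^{1-\gamma} \sim \ell(R)^\gamma \ell(Q)^{1-\gamma}.
\]
This is the one nontrivial ingredient; the rest is bookkeeping.

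For $y \in R$ and any $z \in Q \setminus Q_R \subset E \setminus Q_R$, we thus have $|y-z| \gtrsim \ell(R)^\gamma \ell(Q)^{1-\gamma}$, a quantity much larger than $\ell(R)$ provided $r$ is taken large enough (which is the standing assumption). Since $x \in \Gamma^{\ell(R)}_{\delta \ell(R)}(y)$ gives $|x-y| \lesssim \ell(R) \ll \ell(R)^\gamma \ell(Q)^{1-\gamma}$, combining with Lemma \ref{distance to a point in E} yields
\[
|x-z| \sim |x-y| + |y-z| \gtrsim \ell(R)^\gamma \ell(Q)^{1-\gamma}.
\]
Applying the kernel size bound \eqref{eq:size} (which $\wt S$ inherits) and Cauchy--Schwarz gives
\[
\bigl|\wtT_\mu(1_{Q \setminus Q_R} \Delta_Q f)(x)\bigr| \lesssim \frac{\|\Delta_Q f\|_{L^1(\mu)}}{\bigl(\ell(R)^\gamma \ell(Q)^{1-\gamma}\bigr)^{m+\alpha}} \leq \frac{\mu(Q)^{1/2} \|\Delta_Q f\|_{L^2(\mu)}}{\bigl(\ell(R)^\gamma \ell(Q)^{1-\gamma}\bigr)^{m+\alpha}}.
\]

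To finish, I multiply by $\ell(R)^\alpha$ and use the defining identity $\gamma(m+\alpha) = \alpha/2$ to rewrite
\[
\frac{\ell(R)^\alpha}{\bigl(\ell(R)^\gamma \ell(Q)^{1-\gamma}\bigr)^{m+\alpha}} = \frac{\ell(R)^{\alpha - \gamma(m+\alpha)}}{\ell(Q)^{(1-\gamma)(m+\alpha)}} = \frac{\ell(R)^{\alpha/2} \ell(Q)^{\alpha/2}}{\ell(Q)^{m+\alpha}}.
\]
Since $R \subset Q$, one has $D(Q,R) = \ell(Q) + \ell(R) + d(Q,R) \sim \ell(Q)$, and so the right-hand side is comparable to $A^\alpha_{Q,R}/(\mu(Q)^{1/2}\mu(R)^{1/2})$. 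Substituting back produces exactly $A^\alpha_{Q,R} \mu(R)^{-1/2} \|\Delta_Q f\|_{L^2(\mu)}$, as claimed. The only delicate point is confirming $\ell(Q_R) \geq \delta^{-r}\ell(R)$ so that goodness is truly applicable to $Q_R$; this is secured by the strict inequality $k > r$ built into the definition of $\scrD_4(R)$.
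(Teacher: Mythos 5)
Your proof is correct and follows essentially the same route as the paper: use goodness of $R$ applied to $Q_R$ (valid since $\ell(Q_R)\geq\delta^{-r}\ell(R)$, as $k>r$) to force $d(x,Q\setminus Q_R)\gtrsim\ell(R)^\gamma\ell(Q_R)^{1-\gamma}$, then apply the kernel size bound, Cauchy--Schwarz, and the identity $\gamma(m+\alpha)=\alpha/2$ together with $D(Q,R)\sim\ell(Q)\sim\ell(Q_R)$. The only cosmetic difference is that you swap $\ell(Q_R)$ for $\ell(Q)$ at the start while the paper carries $\ell(Q_R)$ through to the last line, which is immaterial.
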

\begin{proof}
Because $R$ is good it holds that 
\begin{equation*}
d(x,Q\setminus Q_R) \gtrsim d(y,Q\setminus Q_R) \geq \ell(R)^{\gamma}\ell(Q_R)^{1-\gamma},
\end{equation*}
and so (since $\ell(Q_R) \sim \ell(Q)+ \ell(R)$ and $d(Q,R)=0$) we have
\begin{equation*}
\begin{split}
\ell(R)^\alpha \big| \wtT_\mu(1_{Q \setminus Q_R} \Delta_Q f)(x) \big| 
&\lesssim  \frac{\ell(R)^\alpha\|\Delta_Q f\|_{L^1(\mu)}}{d(x,Q\setminus Q_R)^{m+\alpha}} \\
&\lesssim \frac{\ell(R)^{\alpha-\gamma(m+\alpha)} \ell(Q_R)^{\gamma(m+\alpha)}}{\ell(Q_R)^{m+\alpha}}\mu(Q)^\frac{1}{2} \|\Delta_Q f\|_{L^2(\mu)}\\
& \lesssim \frac{\ell(R)^{\frac{\alpha}{2}}\ell(Q)^{\frac{\alpha}{2}}}{D(Q,R)^{m+\alpha}}\mu(Q)^{\frac{1}{2}} \|\Delta_Q f \|_{L^2(\mu)}.
\end{split}
\end{equation*}
\end{proof}

Using Lemma \ref{part of IV} we see that
\begin{equation*}
\begin{split}
\Big(\int_{B} \sum_{R \in \calD'} &1_R(y) \ell(R)^{2\alpha} \int_{ \Gamma_{l(R)\delta}^{l(R)}(y)}\big|\widetilde{T}_\mu \sum_{Q \in \scrD_4(R)} 1_{Q \setminus Q_R}\Delta_Q f(x)\big|^2  \ud \sigma(x) \ud \mu(y) \Big)^\frac{1}{2} \\
& \lesssim \Big( \sum_{R \in \calD'} \Big[ \sum_{Q \in  \calD^{tr}_B(\omega)} A^\alpha_{Q,R} \|\Delta_Q f \|_{L^2(\mu)} \Big]^2 \Big)^\frac{1}{2} 	
\lesssim \|f\|_{L^2(\mu)},
\end{split}
\end{equation*}
where Lemma \ref{matrix lemma} was applied in the last step.
Therefore, it remains to bound
\begin{equation}\label{main term}
\Big(\int_{B} \sum_{R \in \calD'} 1_R(y) \ell(R)^{2\alpha} \int_{ \Gamma_{l(R)\delta}^{l(R)}(y)}\big|\widetilde{T}_\mu \sum_{Q \in \scrD_4(R)} 1_{Q_R}\Delta_Q f(x)\big|^2  \ud \sigma(x) \ud \mu(y) \Big)^\frac{1}{2}.
\end{equation}

Fix for the moment some $R \in \calD'$ and $Q \in \scrD_4(R)$. Recall that the cube $Q_R$ is transit. Hence we can write
$$
1_{Q_R}\Delta_Q f =B_{Q,R}b- B_{Q,R}b1_{E \setminus Q_R},
$$
where
$$
B_{Q,R}:=\frac{\langle f \rangle_{Q_R}}{\langle b \rangle _{Q_R}} - \frac{\langle f \rangle_Q}{\langle b \rangle_Q}.
$$
Let $y \in R$ and $x \in \Gamma_{\delta\ell(R)}^{\ell(R)}(y)$, and look at the term $\big| B_{Q,R} \wtT_\mu(b1_{E \setminus Q_R})(x)\big|$. For every $z \in E \setminus Q_R$ we have 
$$
|x-z| \gtrsim |y-z| \gtrsim  \ell(R)^\gamma\ell(Q_R)^{1-\gamma} +|y-z|, 
$$
where the fact that $R$ is good was used. Hence
\begin{equation*}
\begin{split}
\big|\wtT_\mu(b1_{E \setminus Q_R} )(x) \big| \lesssim \int_{E \setminus Q_R} \frac{|b(z)|}{|x-z|^{m+\alpha}} \ud \mu(z) & \lesssim \int_E \frac{\ud \mu(z)}{(\ell(R)^\gamma\ell(Q_R)^{1-\gamma}+|y-z|)^{m+\alpha}} \\
&\lesssim (\ell(R)^\gamma\ell(Q_R)^{1-\gamma})^{-\alpha},
\end{split}
\end{equation*}
where the last estimate follows from the usual calculations as in \eqref{annulus, transit}, but it is important to use the facts that $R$ is transit, $y \in R$ and $\ell(R)^\gamma\ell(Q_R)^{1-\gamma} \geq \ell(R)$. Because $Q_R$ is  transit  there holds that
$$
|B_{Q,R}| \lesssim \Big|B_{Q,R} \frac{1}{\mu(Q_R)} \int_{Q_R} b \ud \mu\Big| = \big| \langle \Delta_Qf \rangle_{Q_R}\big| \leq \frac{1}{\mu(Q_R)^\frac{1}{2}}. \|1_{Q_R} \Delta_Q f \|_{L^2(\mu)}.
$$
A combination of these estimates gives
\begin{equation}\label{error term}
\ell(R)^\alpha \big| \wtT_\mu(1_{E \setminus Q_R} B_{Q,R} b) (x)\big| \lesssim \frac{\ell(R)^{\alpha(1-\gamma)}}{\ell(Q_R)^{\alpha(1-\gamma)}}\frac{\|1_{Q_R} \Delta_Q f \|_{L^2(\mu)}}{\mu(Q_R)^\frac{1}{2}}.
\end{equation}

Applying \eqref{error term} we have
\begin{equation}\label{E setminus Q_R}
\begin{split}
\int_{B} &\sum_{R \in \calD'} 1_R(y) \ell(R)^{2\alpha} \int_{ \Gamma_{l(R)\delta}^{l(R)}(y)}\big|\sum_{Q \in \scrD_4(R)}  B_{Q,R} \wtT_\mu(1_{E \setminus Q_R}b)(x)\big|^2  \ud \sigma(x) \ud \mu(y) \\
&\lesssim   \sum_{R \in \calD'} \mu(R) \Big[ \sum_{Q \in \scrD_4(R)} \frac{\ell(R)^{\alpha(1-\gamma)}}{\ell(Q_R)^{\alpha(1-\gamma)}}\frac{\|1_{Q_R} \Delta_Q f \|_{L^2(\mu)}}{\mu(Q_R)^\frac{1}{2}}\Big]^2.
\end{split}
\end{equation}
Since 
$$
\sum_{Q \in \scrD_4(R)} \frac{\ell(R)^{\alpha(1-\gamma)}}{\ell(Q_R)^{\alpha(1-\gamma)}}\lesssim 1
$$
by \eqref{collection 4}, we can continue with Jensen's inequality as
\begin{equation*}
\begin{split}
RHS\eqref{E setminus Q_R}& \lesssim    \sum_{R \in \calD'} \mu(R)  
\sum_{Q \in \scrD_4(R)} \frac{\ell(R)^{\alpha(1-\gamma)}}{\ell(Q_R)^{\alpha(1-\gamma)}}\frac{\|1_{Q_R} \Delta_Q f \|_{L^2(\mu)}^2}{\mu(Q_R)} \\
&= \sum_{k=r}^\infty \delta^{k\alpha(1-\gamma)} \sum_{\begin{substack}{Q \in \calD^{tr}_B(\omega)\\ Q \not= Q_B(\omega)}\end{substack}}\frac{\|1_Q \Delta_{\widehat{Q}} f \|_{L^2(\mu)}^2}{\mu(Q)} \sum_{\begin{substack}{R \in \calD' \\ Q(R,k)=  Q }\end{substack}} \mu(R) \\
& \leq \sum_{k=r}^\infty \delta^{k\alpha(1-\gamma)} \sum_{\begin{substack}{Q \in \calD^{tr}_B(\omega)\\ Q \not= Q_B(\omega)}\end{substack}}\|1_Q \Delta_{\widehat{Q}} f \|_{L^2(\mu)}^2 \lesssim \| f \|_{L^2(\mu)}^2.
\end{split}
\end{equation*}
Combining this with \eqref{main term} we see that the last term to be estimated is 
\begin{equation}\label{para}
I:=\int_{B} \sum_{R \in \calD'} 1_R(y) \ell(R)^{2\alpha} \int_{ \Gamma_{l(R)\delta}^{l(R)}(y)}\big|\sum_{Q \in \scrD_4(R)}  B_{Q,R} \wtT_\mu b(x)\big|^2  \ud \sigma(x) \ud \mu(y). \end{equation}
So far we have used only the properties of the kernel $S$, but in this part we finally apply the fact that $\tcalC_\mu b \in L^\infty(\mu)$. 

Suppose $R \in \calD'$ is such that $\ell(R) < \delta^r \ell(Q_B(\omega))$ (i.e. the case $\scrD_4(R) \ne \emptyset$). Then  
\begin{equation}\label{telescope}
\begin{split}
\Big|\sum_{Q \in \scrD_4(R)} B_{Q,R}\Big|= \Big|\sum_{\begin{substack}{ Q \in \calD_B^{tr}(\omega) \\ Q \supsetneq Q(R,r)}\end{substack}} \Big(\frac{\langle f \rangle_{Q_R}}{\langle b \rangle _{Q_R}} - \frac{\langle f \rangle_Q}{\langle b \rangle_Q}\Big) \Big|&= \Big|\frac{\langle f \rangle_{Q(R,r)}}{\langle b \rangle_{Q(R,r)}}-\frac{\langle f \rangle_{Q_B(\omega)}}{\langle b \rangle_{Q_B(\omega)}}\Big| \\
& \lesssim |\langle f \rangle_{Q(R,r)}|,
\end{split}
\end{equation}
because the cubes $Q(R,k)$, $k \ge r$, are transit and $\langle f \rangle_{Q_B(\omega)} = 0$.

Using \eqref{telescope} we have
\begin{equation*}\begin{split}
I&=  \int_{B} \sum_{\begin{substack}{R \in \calD'\\ \ell(R)<\delta^r\ell(Q_B(\omega))}\end{substack}}1_R(y)\ell(R)^{2\alpha} \int_{ \Gamma_{l(R)\delta}^{l(R)}(y)}\Big|\frac{\langle f \rangle_{Q(R,r)}}{\langle b \rangle_{Q(R,r)}}\wtT_\mu b(x)\Big|^2  \ud \sigma(x) \ud \mu(y) \\
&\lesssim \int _B \sum_{Q \in \calD^{tr}_B(\omega)} |\langle f \rangle_{Q}|^2  \sum_{\begin{substack}{R \in \calD' \\ Q(R,r)=Q}\end{substack}} 1_{R}(y) \int_{ \Gamma_{l(R)\delta}^{l(R)}(y)}\big| \wtT_\mu b(x)\big|^2 d(x,E)^{2\alpha} \ud \sigma(x) \ud \mu(y) \\
&\leq  \sum_{Q \in \calD_B^{tr}(\omega)} \big|\langle f \rangle_Q\big|^2 a_Q,
\end{split}
\end{equation*}
where 
$$
a_Q:= \int _B \sum_{\begin{substack}{R \in \calD' \\ Q(R,r)=Q}\end{substack}} 1_{R}(y) \int_{ \Gamma_{l(R)\delta}^{l(R)}(y)}\big| \wtT_\mu b(x)\big|^2d(x,E)^{2\alpha}  \ud \sigma(x) \ud \mu(y).
$$

The last remaining thing is to verify the dyadic Carleson condition for the numbers $a_Q$.  Luckily, this is straightforward because of our strong testing condition $\wtT_\mu b \in L^\infty(\mu)$. Indeed, fix some $Q_0 \in \calD_B(\omega)$. Then
\begin{equation*}
\begin{split}
\sum_{\begin{substack}{Q \in \calD_B^{tr}(\omega) \\  Q \subset Q_0}\end{substack}}a_Q & 
\le \int_B \sum_{\begin{substack}{R \in \calD' \\  R \subset Q_0} \end{substack}} 1_R(y) \int_{ \Gamma_{l(R)\delta}^{l(R)}(y)}\big| \wtT_\mu b(x)\big|^2  d(x,E)^{2\alpha} \ud \sigma(x) \ud \mu(y) \\
& \leq \int_{Q_0} \tcalC_\mu b (y)^2 \ud \mu(y) \lesssim \mu(Q_0).
\end{split}
\end{equation*}
Because the Carleson condition holds, we have
$$
\sum_{Q \in \calD_B^{tr}(\omega)} \big|\langle f \rangle_Q\big|^2 a_Q \lesssim \|f\|_{L^2(\mu)}^2.
$$
This was the last piece in the $Tb$ argument and concludes the proof of Theorem \ref{the big piece Tb}.
\end{proof}

\appendix
\section{A sketch of the proof of Lemma \ref{probability of badness}}\label{appendix}

Here we sketch the proof of Lemma \ref{probability of badness} following the arguments in \cite{AH}, Theorem 2.11. The constants $M$ and $L$ in the statement of Lemma \ref{probability of badness, appendix} are related to properties of $E$ as a geometrically doubling space, and are used in the construction of random dyadic cubes.

\begin{lem}\label{probability of badness, appendix}
There exist two constants $C=C(M,L)>0$ and $ \eta \in (0,1]$ so that the following holds. Fix some big enough (depending on $\gamma$) goodness parameter $r$. Suppose $B \subset E$ is a ball in $E$ and construct the dyadic lattices $\calD(\omega), \omega \in \Omega,$ in $E$ using the center of $B$ as the fixed reference dyadic point, see Section \ref{preliminaries}. For some fixed $\omega_0 \in \Omega$ write $\calD_0=\calD(\omega_0)$ and recall the lattices $\calD_B(\omega) \subset \calD(\omega)$.   

Assume $k_0 \in \Z$ is such that $ \ell(Q_B(\omega))=\delta^{k_0}$ for some, and hence for every, $\omega \in \Omega$.  Let $k_1 \in \Z$ be any number such that $k_1 \geq k_0+r$. With the fixed $\omega_0$ define the probability space
$$
\Omega_{k_0}^{k_1} := \{ \omega \in \Omega \colon \omega(m)= \omega_0(m) \text{ if } m<k_0 \text{ or } m>k_1\}
$$
equipped with the natural probability measure  such that the coordinates $m \mapsto \omega(m), k_0 \leq m \leq k_1,$ are independent and uniformly distributed over $\{0, \dots, L\} \times \{1, \dots, M\}$.

Then, for every cube $R \in \calD_0$ with $\ell(R) \geq \delta^{k_1} $ it holds that
\begin{equation}
\mathbb{P}\big(\{ \omega \in \Omega^{k_1}_{k_0} : R \text{ is } \calD_B(\omega) \text{-bad}\}\big) \leq C \delta^{\gamma r \eta}.
\end{equation}
\end{lem}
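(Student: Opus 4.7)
The plan is to replay the argument of \cite{AH}, Theorem 2.11, essentially verbatim, and then observe that for a fixed $R \in \calD_0$ the badness event depends only on the randomizations $\omega(m)$ with $m$ lying in the finite window $[k_0,k_1]$, so that passing from the full $\Omega$ to $\Omega_{k_0}^{k_1}$ leaves the relevant probabilities unchanged.

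First, I would decompose the badness event according to the generation of the witnessing cube. Writing $\ell(R)=\delta^j$ with $j\leq k_1$, if $R$ is $\calD_B(\omega)$-bad then there exists $Q\in\calD_B(\omega)$ with $\ell(Q)=\delta^k$ violating \eqref{def. of goodness}. The requirement $\ell(Q)\geq\delta^{-r}\ell(R)$ forces $k\leq j-r$, and membership in $\calD_B(\omega)$ (i.e.\ $Q\subset Q_B(\omega)$) forces $k\geq k_0$. A union bound reduces matters to estimating, for each fixed $k\in[k_0,j-r]$, the probability that some generation-$k$ cube of $\calD(\omega)$ has its boundary unusually close to $R$, i.e.\ that the sets $R\cap Q$ and $R\setminus Q$ are both non-empty at distance less than $\ell(R)^\gamma\ell(Q)^{1-\gamma}$ from the common boundary.

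Second, I would invoke the single-level estimate from \cite{AH} (or \cite{HM}, Lemma~10.1), which bounds this single-level probability by $C\delta^{\gamma(j-k)\eta}$ for some absolute $\eta\in(0,1]$ and some $C=C(L,M)$ depending only on the geometric doubling constant of $E$. Summing the geometric series over $k\in[k_0,j-r]$ then yields the desired bound $C'\delta^{\gamma r\eta}$.

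The one point that genuinely needs verification is that the single-level probability is unaffected by replacing $\mathbb{P}$ with its restriction to $\Omega_{k_0}^{k_1}$, and this is where I expect the main (but still minor) obstacle to lie. For this I would inspect the construction of the lattices in \cite{AH} and observe that the cubes $Q^k_\alpha(\omega)$ of generation $k$ depend measurably only on coordinates $\omega(m)$ in a uniformly bounded window around $k$; since $k$ ranges over $[k_0,k_1-r]$, for $r$ large enough (relative to that window) this window sits inside $[k_0,k_1]$. Inside $[k_0,k_1]$ the coordinates are independent and uniformly distributed under both $\mathbb{P}$ and $\mathbb{P}|_{\Omega_{k_0}^{k_1}}$, while outside $[k_0,k_1]$ they are irrelevant to the event. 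Hence the two probabilities coincide, and the bound transfers.
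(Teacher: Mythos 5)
Your proposal follows essentially the same route as the paper's proof: union bound over the witnessing generations $l\in[k_0,m-r]$, the single-level boundary estimate from \cite{AH}, a geometric series yielding $\delta^{\gamma r\eta}$, and the observation that the relevant coordinates $\omega(p)$ lie in $[k_0,k_1]$.

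The one place where your explanation is imprecise is the claim that the level-$l$ event ``depends measurably only on coordinates $\omega(m)$ in a uniformly bounded window around'' the level. In the actual argument the coordinates $p$ that matter for a level $l$ run over the interval $[\,l,\ \log_\delta 49+m\gamma+l(1-\gamma)\,]$, whose length is $\log_\delta 49+(m-l)\gamma$ and therefore \emph{grows} with $m-l$; it is not uniformly bounded. What saves the day is not that $r$ is large relative to some fixed window, but that the window automatically extends upward from $l\geq k_0$ and its upper endpoint is $\leq m\leq k_1$, so it sits inside $[k_0,k_1]$ for free. The largeness of $r$ (concretely, $r\gamma>2$) is needed for a different reason: to guarantee that this interval actually contains at least one integer $p$, so that the probability estimate $(1-\tau)^{\lfloor\log_\delta 49+(m-l)\gamma\rfloor}$ is nontrivial. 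With that clarification, your plan matches the paper's proof.
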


Before the proof define for $\varepsilon>0$ the $\varepsilon$-\emph{boundary} $\partial_\varepsilon Q$ of a cube $Q \in \calD(\omega)$ by
$$
\partial_\varepsilon Q:=\{ y \in Q \colon d(y, E \setminus Q) <\varepsilon \ell(Q)\}. 
$$

\begin{proof}[Proof of Lemma \ref{probability of badness, appendix}]
Let $R \in \calD_{0}$ be such that $\ell(R)= \delta^m$, where $k_0 +r \leq m \leq k_1$ (if $m < k_0 + r$, then $R$ is automatically good by definition).
 Fix some $\omega \in \Omega_{k_0}^{k_1}$ for the moment. First we show that if $R$ is $\calD_B(\omega)$-bad, then there exists  $l \in \Z, k_0 \leq l \leq m-r$, and $Q \in \calD_l(\omega)$ so that  $c_R \in \partial _{7\ell(R)^\gamma/\ell(Q)^\gamma }Q$. Indeed, let $k_0 \leq l \leq m-r$ and suppose $Q \in \calD_l(\omega)$ is such that $c_R \in Q$. If 
 $ c_R \not \in \partial _{7\ell(R)^\gamma/\ell(Q)^\gamma }Q$, 
 then because $R \subset B(c_R, 6\ell(R))$ by \eqref{new centers}, it is seen that
 $$
d(R,E\setminus Q) \geq \ell(R)^\gamma\ell(Q)^{1-\gamma}.
$$
In particular, we have
$$
\max\big(d(R,Q'), d(R,E\setminus Q')\big) \ge \ell(R)^\gamma\ell(Q')^{1-\gamma}
$$
for all $Q' \in \calD_B(\omega)$ with $\ell(Q')=\delta^l$. If this happens for all $l$ such that $k_0 \leq l \leq  m-r$, then the cube $R$ is $\calD_B(\omega)$-good. We have shown that 
\begin{equation}\label{prob. of R bad}
\mathbb{P}\big(\{ \omega \in \Omega^{k_1}_{k_0} : R \text{ is } \calD_B(\omega) \text{-bad}\}\big)
\leq \sum_{l=k_0}^{m-r} \mathbb{P}\big(\{ \omega \in \Omega^{k_1}_{k_0} : c_R \in  \bigcup_{Q \in \calD_l(\omega)} \partial _{7\ell(R)^\gamma/\ell(Q)^\gamma }Q\}\big).
\end{equation}

Next we fix some $l \in \{k_0, \dots, m-r\}$ and estimate the corresponding term in the right hand side of \eqref{prob. of R bad}. Following the argument in \cite{AH}, if $\omega(p)$ has a certain value for some $p \geq l$ such that $\delta^p \geq 49 \delta^{m\gamma} \delta^{l(1-\gamma)}$, then 
\begin{equation}\label{x not near boundary}
c_R \not \in  \bigcup_{ Q \in \calD_l(\omega)} \partial_{7\ell(R)^\gamma/\ell(Q)^\gamma} Q.
\end{equation}
This part of the argument is short, but to state it we would need to introduce more of the construction of the random dyadic cubes. We refer the reader to \cite{AH}.

The requirement $\delta^l \geq \delta^p \geq 49 \delta^{m\gamma} \delta^{l(1-\gamma)}$  amounts to 
\begin{equation}\label{p between}
l \leq p \leq \log_\delta 49 +m\gamma +l(1-\gamma).
\end{equation}
Note that in particular every such $p$ satisfies $k_0 \leq p \leq k_1$. For \eqref{p between} to make sense we demand $r$ to be so big that $r\gamma>2$, say,  whence 
\begin{equation}\label{how many p}
\log_\delta 49 +m\gamma +l(1-\gamma)-l\geq (m-l)\gamma-1>1.
\end{equation} 
Denote by $\lfloor \log_\delta 49 +(m-l)\gamma \rfloor$ the smallest integer less than or equal to $\log_\delta 49 +(m-l)\gamma$.

For every $p \in \Z$ the variable $\omega(p)$ has the probability $\tau:= \frac{1}{M(L+1)}$ of getting a given value. Hence, by \eqref{x not near boundary}, we have
 $$
 \mathbb{P}\big(\{ \omega \in \Omega^{k_1}_{k_0} : c_R \in  \bigcup_{Q \in \calD_l(\omega)} \partial_{7\delta^{(m-l)\gamma}} Q\}\big)
\leq (1-\tau)^{\lfloor \log_\delta 49 +(m-l)\gamma \rfloor} \leq C (1-\tau)^{(m-l)\gamma}.
$$
Combining this with \eqref{prob. of R bad} we get
$$
\mathbb{P}\big(\{ \omega \in \Omega^{k_1}_{k_0} : R \text{ is } \calD_B(\omega) \text{-bad}\}\big)\leq C\sum_{l=k_0}^{m-r} (1-\tau)^{ (m-l)\gamma} \sim (1-\tau)^{r\gamma},
$$
and we can rewrite the bound as
$$
(1-\tau)^{r\gamma}= \delta^{r\gamma\log_\delta (1-\tau)} .
$$  
This gives the required conclusion because $\log_\delta (1-\tau) \in (0,1)$.
\end{proof}

\section{$L^2$ boundedness implies weak $(1,1)$ boundedness}\label{weak (1,1)-boundedness}

We verify here that if $\mu$ is a measure of order $m$ in $E$ and  $\calC_\mu$ is bounded in $L^2(\mu)$, then
$$
\calC \colon \calM(E) \to L^{1, \infty}(\mu)
$$
boundedly. The proof of this follows the standard steps using the Calder\'on-Zygmund decomposition, but we check the details because of our unusual set-up. 

First we record a few lemmas, and begin with the non-homogeneous Calder\'on-Zygmund decomposition whose proof can be found for example in \cite{ToBook}. We say that a collection $\{B_i\}_i$ of balls in $\R^n$ has \emph{bounded overlap} if there exists a constant $C$ such that
$$
\sum_i 1_{B_i}(x) \leq C
$$
for every $x \in \R^n$.

\begin{lem}\label{CZ-dec.}
Let $\mu$ be a Radon measure in $\R^n$ and suppose $\nu$ is a complex measure in $\R^n$ with compact support. Let $\lambda > 2^{n+1}\frac{|\nu|(\R^n)}{\mu(\R^n)}$. 

There exists a countable family $\{B_i\}_{i \in \calI}$ of closed balls with bounded overlap and with centers in $\supp \nu$, and a function $f \in L^1(\mu)$ with $\| f \|_{L^\infty(\mu)} \leq \lambda$ so that
\begin{gather}
|\nu|(B_i) > 2^{-n-1} \lambda \mu(2B_i) \ \text{ for all }i, \label{CZ1}\\
|\nu|(\eta B_i) \leq 2^{-n-1} \lambda \mu(2\eta B_i) \ \text{ for all }i \text{ and }\eta>2, \label{CZ2}\\
1_{\R^n \setminus \bigcup_i B_i}\nu= f \mu. \label{CZ3}
\end{gather}

For every $ i \in \calI$ suppose $R_i$ is $(6,6^{m+1})$-doubling ball  (with respect to $\mu$)  concentric with $B_i$ and $r(R_i) > 4r(B_i)$. Define the functions $w_i:= \frac{1_{B_i}}{\sum_{j} 1_{B_j}}$. Then there exists a family $\{\varphi_i\}_{i \in \calI}$ of functions such that each function is of the form $\varphi_i=\alpha_i h_i$, where $\alpha_i \in \C$ and $h_i$ is a non-negative function, and this family satisfies the properties
\begin{gather}
\supp \varphi_i \subset R_i,  \label{CZ4}\\
\int \varphi_i \ud \mu= \int w_i \ud \nu, \label{CZ5}\\
\sum_{i \in \calI} |\varphi_i| \leq C_1 \lambda, \label{CZ6} \\
\| \varphi_i\|_{L^{\infty}(\mu)} \mu(R_i) \leq c |\nu|(B_i). \label{CZ7}
\end{gather}
Here $C_1$ is a constant depending on $m$ and $n$, and $c$ is an absolute constant. 
\end{lem}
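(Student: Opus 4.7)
}

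The statement is the classical non-homogeneous Calder\'on--Zygmund decomposition of Tolsa, so my plan is to follow the standard Nazarov--Treil--Volberg / Tolsa strategy, with only minor modifications because here both $\mu$ and $\nu$ live in $\R^n$ (no metric space subtleties enter). First I would select the ``bad'' balls via a stopping-time argument. For each $x\in\supp\nu$ with $\sup_r \lvert\nu\rvert(B(x,r))/\mu(2B(x,r)) > 2^{-n-1}\lambda$ consider the largest radius $r(x)$ for which this inequality is strict; the assumption $\lambda > 2^{n+1}\lvert\nu\rvert(\R^n)/\mu(\R^n)$ together with the compact support of $\nu$ guarantees that $r(x)$ is finite. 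Apply a Besicovitch-type covering to the closed balls $\bar B(x,r(x))$ to extract a countable subfamily $\{B_i\}_{i\in\calI}$ with bounded overlap; maximality yields \eqref{CZ1} and \eqref{CZ2}. Outside $\bigcup_i B_i$ the estimate $\sup_r \lvert\nu\rvert(B(x,r))/\mu(2B(x,r)) \le 2^{-n-1}\lambda$ holds, so the Lebesgue differentiation theorem (with respect to $\mu$) provides a Radon--Nikodym derivative $f = d\nu/d\mu$ on that set with $\|f\|_{L^\infty(\mu)}\le\lambda$, proving \eqref{CZ3}.

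Next I would build the auxiliary functions $\varphi_i$. The existence of the doubling enlargements $R_i \supset 4B_i$ is part of the hypothesis, so I would order the balls $B_i$ by decreasing radius and construct $h_i$ iteratively. In the $i$-th step, set
\[
h_i \;=\; \frac{1_{R_i \setminus \bigcup_{j<i} R_j}}{\mu(R_i \setminus \bigcup_{j<i} R_j)}
\quad\text{if the denominator is comparable to }\mu(R_i),
\]
and otherwise redistribute mass to a suitable subset of $R_i$ following Tolsa's construction; then choose $\alpha_i \in \C$ so that \eqref{CZ5} holds, which automatically forces $|\alpha_i| \lesssim \lvert\nu\rvert(B_i)/\mu(R_i)$ and hence \eqref{CZ7}. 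Property \eqref{CZ4} is immediate from the support of $h_i$.

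The main obstacle, and the whole technical heart of this lemma, is establishing the pointwise bound \eqref{CZ6}: $\sum_i|\varphi_i|\le C_1\lambda$. The naive sum $\sum_i \|\varphi_i\|_\infty$ need not be bounded because infinitely many $R_i$'s may overlap a given point, even though the $B_i$'s do not. The trick is exactly the doubling of $R_i$ together with the stopping condition \eqref{CZ1}: for a fixed point $y$, if $y\in R_i$ then $R_i \subset C B_i$ for some dimensional $C$, so the $R_i$'s containing $y$ are ``comparable'' and one can organise them in chains of geometrically increasing size. Using the $(6,6^{m+1})$-doubling of $R_i$ to propagate mass bounds and \eqref{CZ1} to convert $|\nu|(B_i)$ into $\lambda\mu(2B_i)$, one proves that the tails of these chains sum geometrically, yielding the uniform bound $C_1\lambda$. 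The bounded overlap of $\{B_i\}$ is used repeatedly to control the combinatorics of the chains.

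Altogether, the proof is essentially a bookkeeping exercise once the correct stopping time, covering, and doubling enlargements are in place; the only genuinely delicate step is the geometric summation establishing \eqref{CZ6}, which I would carry out exactly as in \cite{ToBook}.
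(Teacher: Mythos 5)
The paper does not prove this lemma: the statement is recorded as a known result and cited directly from Tolsa's book \cite{ToBook}, so there is no in-paper argument to compare your proposal against, and you too ultimately defer to \cite{ToBook}, which is the same treatment.

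Two details you would need to repair in a self-contained write-up. The disjointification formula $h_i=1_{R_i\setminus\bigcup_{j<i}R_j}/\mu\big(R_i\setminus\bigcup_{j<i}R_j\big)$ is not Tolsa's and does not work in general: the $R_j$'s need not have bounded overlap, so $\mu\big(R_i\setminus\bigcup_{j<i}R_j\big)$ can be tiny compared to $\mu(R_i)$ and then \eqref{CZ7} would fail. Tolsa allows the supports of the $\varphi_i$ to overlap, and at step $i$ instead exhibits a subset of $R_i$ of $\mu$-measure $\gtrsim\mu(R_i)$ on which the already accumulated sum $\sum_{j<i}|\varphi_j|$ is below a fixed multiple of $\lambda$; establishing this uses the bounded overlap of the $B_j$, the doubling of $R_i$, and \eqref{CZ2} to show $\int_{R_i}\sum_{j<i}|\varphi_j|\,d\mu\lesssim\lambda\mu(R_i)$. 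For that accounting to go through the balls must be processed in \emph{non-decreasing} order of radius (so that any previously handled $R_j$ meeting $R_i$ satisfies $R_j\subset 3R_i$, hence $B_j\subset 3R_i$), which is the opposite of the decreasing order you propose; with decreasing order the $B_j$ with $j<i$ are not confined near $R_i$ and the bounded-overlap argument breaks down.
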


The next two simple lemmas can also be found for example in \cite{ToBook}.

\begin{lem}\label{big doubling balls}
Suppose $\mu$ is a measure of order $m$ in $\R^n$. Let $b > a^m$. If $B$ is a ball in $\R^n$, then there exists $s>1$ such that the ball $sB$ is $(a, b)$-doubling.
\end{lem}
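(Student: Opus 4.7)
The plan is to argue by contradiction. Assume that \emph{every} dilate $sB$ with $s > 1$ fails to be $(a,b)$-doubling, so that $\mu(asB) > b\,\mu(sB)$ for all $s>1$. The natural strategy is to restrict attention to the geometric sequence $s=a^k$, $k \geq 1$, and to iterate the failure condition; the resulting exponential growth of $\mu$ on the balls $a^kB$ will eventually violate the polynomial upper bound $\mu(a^kB) \lesssim (a^k r(B))^m$ forced by the order $m$ assumption \eqref{order m}.

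First I would dispose of the degenerate possibility that $\mu(a^kB)=0$ for every $k\geq 1$. In that case $s=a$ would satisfy $\mu(asB)=\mu(a^2B)=0 \leq b\cdot 0 = b\,\mu(sB)$, making $sB$ trivially $(a,b)$-doubling and contradicting the assumption. Hence there exists some $k_0\geq 1$ with $\mu(a^{k_0}B)>0$.

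Now iterate: applying the failure condition at $s=a^k$ for each $k\geq k_0$ gives $\mu(a^{k+1}B) > b\,\mu(a^kB)$, so by induction
\begin{equation*}
\mu(a^{k_0+j}B) > b^{\,j}\,\mu(a^{k_0}B), \qquad j\geq 0.
\end{equation*}
Combining this with the order $m$ bound $\mu(a^{k_0+j}B) \leq C\,(a^{k_0+j}r(B))^m$ yields
\begin{equation*}
\Big(\frac{b}{a^m}\Big)^{\!j}\,\mu(a^{k_0}B) \;\leq\; C\,a^{k_0 m}\,r(B)^m
\end{equation*}
for all $j\geq 0$. Since $\mu(a^{k_0}B)$ is a fixed positive number and $b/a^m>1$, the left-hand side tends to $\infty$ as $j\to\infty$, a contradiction, so some $s=a^k$ must work.

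The only subtlety is the null case $\mu(B)=0$, handled above by starting the iteration at the first dyadic-type scale of positive $\mu$-measure; everything else is a routine iteration against the polynomial growth bound, and the precise choice $s=a^k$ is merely a convenient discretization of the set of admissible dilations.
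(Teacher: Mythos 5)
Your proof is correct and is essentially the standard argument (the one in Tolsa's book \cite{ToBook}, which the paper cites for this lemma without reproducing it): assume failure along the geometric scales $s=a^k$, iterate the reverse doubling inequality, and contradict the polynomial upper bound $\mu(a^kB)\lesssim (a^k r(B))^m$ using $b>a^m$. The handling of the null case is also fine. The only unstated detail is that the argument tacitly assumes $a>1$ (so that $s=a^k>1$); if $a=1$, the hypothesis $b>a^m=1$ makes every ball trivially $(1,b)$-doubling and the statement is immediate, so nothing is lost.
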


\begin{lem}\label{no doubling balls between}
Suppose $\mu$ is a Radon measure in $\R^n$. Let $b > a^m, a>1$. Suppose $B_1$ and $B_2$ are two balls in $\R^n$ with center $x$  and  $B_1 \subset B_2$. Assume none of the balls $a^kB_1$ is $(a, b)$-doubling for those $k \in \Z$ such that $ B_1 \subsetneq a^k B_1 \subset B_2$. Then
$$
\int_{B_2 \setminus B_1} \frac{1}{|y-x|^m} \ud \mu(y) \lesssim_{a, b,m} \frac{\mu(B_2)}{r(B_2)^m}.
$$
\end{lem}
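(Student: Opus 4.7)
The plan is to dyadically decompose the annular region $B_2\setminus B_1$ along the scales $a^k B_1$, bound the kernel by its size on each annulus, and then exploit the non-doubling hypothesis to make the measures $\mu(a^k B_1)$ grow geometrically at rate $b$; since $b>a^m$, this growth beats the polynomial decay $(a^k r(B_1))^{-m}$ of the kernel, and the resulting geometric series collapses to its largest term, which sits at the scale of $B_2$.

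Concretely I would set $r_i = r(B_i)$, let $N$ be the largest integer with $a^N r_1 \le r_2$ (so $a^N r_1 \sim_a r_2$ whenever $N\ge 0$), and write
\[
B_2 \setminus B_1 \;=\; \bigcup_{k=0}^{N-1}\big(a^{k+1}B_1 \setminus a^k B_1\big) \;\cup\; \big(B_2 \setminus a^N B_1\big).
\]
On each annulus $|y-x| \ge a^k r_1$, and on the outer piece $|y-x| \ge a^N r_1 \sim r_2$, so the integral is at most
\[
\sum_{k=0}^{N-1} \frac{\mu(a^{k+1}B_1)}{(a^k r_1)^m} \;+\; \frac{\mu(B_2)}{(a^N r_1)^m},
\]
whose second term is already $\lesssim_a \mu(B_2)/r(B_2)^m$. (In the degenerate situation $a r_1 > r_2$, i.e.\ $N=0$, the sum is empty and the argument ends here.)

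For the sum, the hypothesis gives $\mu(a^{j+1}B_1) > b\,\mu(a^j B_1)$ for every $j=1,\dots,N$, so iterating yields $\mu(a^{k+1}B_1) \le b^{-(N-1-k)}\mu(a^N B_1) \le b^{-(N-1-k)}\mu(B_2)$ for $0\le k \le N-1$. Substituting, the sum becomes $\frac{\mu(B_2)}{r_1^m}\,b^{-(N-1)} \sum_{k=0}^{N-1}(b/a^m)^k$; since $b/a^m > 1$ this is comparable to its last term and reduces to $\mu(B_2)/(a^{N-1}r_1)^m \sim_a \mu(B_2)/r(B_2)^m$, which combined with the outer contribution finishes the estimate. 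There is no real obstacle; the only thing to watch is the bookkeeping of the non-doubling chain (which is why the indexing starts at $j=1$ and why the trivial case $N=0$ should be separated) and the elementary comparability $a^N r_1 \sim r(B_2)$ at the end.
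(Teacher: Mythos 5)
Your proof is correct. Note that the paper does not give its own proof of this lemma but instead cites Tolsa's book, where essentially the same argument appears: decompose the annulus $B_2\setminus B_1$ into scales $a^k B_1$, use the non-doubling hypothesis $\mu(a^{j+1}B_1)>b\,\mu(a^j B_1)$ for $j=1,\dots,N$ to deduce $\mu(a^{k+1}B_1)\le b^{-(N-1-k)}\mu(B_2)$, and sum the resulting geometric series, which collapses to its top term since $b/a^m>1$. Your bookkeeping of the indices, the treatment of the degenerate case $N=0$, and the final comparability $a^N r_1 \sim_a r(B_2)$ are all handled correctly.
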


\begin{thm}\label{weak (1,1)}
Let $\mu$ be a measure of order $m$ in $E$.  Suppose that $\calC_\mu$ is bounded in $L^2(\mu)$. Then
$$
\calC \colon \calM(E) \to L^{1,\infty}(\mu)
$$
is bounded with a constant depending on the kernel parameters, the dimension $n$ and the $L^2(\mu)$ norm of $\calC_\mu$.
\end{thm}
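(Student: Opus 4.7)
The plan is to implement the classical non-homogeneous Calder\'on--Zygmund weak-$(1,1)$ argument in the spirit of Tolsa \cite{ToBook}, using the pieces assembled in Lemmas \ref{CZ-dec.}, \ref{big doubling balls}, \ref{no doubling balls between}, together with the cone calculus of Section \ref{preliminaries}. By standard density and truncation reductions (working first with $\calC_{\mu,s}^t$), it suffices to fix a compactly supported $\nu \in \calM(E)$ and a level $\lambda>0$; the case $\mu(\R^n)<\infty$ with $\lambda \le 2^{n+1}|\nu|(\R^n)/\mu(\R^n)$ is trivial, so one may assume the hypothesis of Lemma \ref{CZ-dec.} and apply it, with the doubling enlargements $R_i\supset 4B_i$ provided by Lemma \ref{big doubling balls}. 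This yields a decomposition $\nu = g + b$, where $g := h\,\mu$ with $h:=f+\sum_i\varphi_i$, and $b:=\sum_i\beta_i$ with $\beta_i:=w_i\nu-\varphi_i\mu$. The bounded overlap of $\{B_i\}$ together with \eqref{CZ6} and \eqref{CZ7} give $\|h\|_{L^\infty(\mu)}\lesssim \lambda$, $\|h\|_{L^1(\mu)}\lesssim |\nu|(\R^n)$, while each $\beta_i$ satisfies $\beta_i(\R^n)=0$, $\supp\beta_i\subset R_i$, and $|\beta_i|(\R^n)\lesssim |\nu|(B_i)$.

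By sublinearity $\calC\nu \le \calC g + \calC b$. The good part is handled by Chebyshev and the $L^2$-assumption, using $\|h\|_{L^2(\mu)}^2\le \|h\|_\infty \|h\|_1$:
\begin{equation*}
\mu(\{\calC g>\lambda/2\}) \lesssim \lambda^{-2}\|\calC_\mu h\|_{L^2(\mu)}^2 \lesssim \lambda^{-2}\|h\|_\infty\|h\|_1 \lesssim |\nu|(\R^n)/\lambda.
\end{equation*}
For the bad part I would set $\Omega:=\bigcup_i 6R_i$ and split $\mu(\{\calC b>\lambda/2\}) \le \mu(\Omega)+\mu\{y\in E\setminus\Omega\colon \calC b(y)>\lambda/2\}$.

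For the complement piece: if $y\notin 6R_i$ and $x\in\Gamma(y)$, Lemma \ref{distance to a point in E} gives $|x-c_{B_i}|\ge |y-c_{B_i}|/3 \ge 2r(R_i)$, so for every $z\in\supp\beta_i\subset R_i$ one has $|z-c_{B_i}|\le r(R_i)\le |x-c_{B_i}|/2$. The $y$-H\"older condition \eqref{eq:yHol} applied at the reference point $c_{B_i}$ together with $\beta_i(\R^n)=0$ yields
\begin{equation*}
|T\beta_i(x)| \lesssim \frac{|\nu|(B_i)\, r(R_i)^{\beta}}{(d(x,E)+|y-c_{B_i}|)^{m+\alpha+\beta}}.
\end{equation*}
Squaring, weighting by $d(x,E)^{2\alpha}$ and invoking Lemma \ref{cone computation} gives $\calC\beta_i(y)\lesssim |\nu|(B_i)\, r(R_i)^{\beta}|y-c_{B_i}|^{-(m+\beta)}$. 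Integrating against $d\mu$ over $\{|y-c_{B_i}|>6r(R_i)\}$ via the usual dyadic annular decomposition and the order-$m$ property of $\mu$ yields $\int_{E\setminus 6R_i}\calC\beta_i\,d\mu \lesssim |\nu|(B_i)$; summing in $i$ using bounded overlap of $\{B_i\}$ and applying Chebyshev gives the bound $|\nu|(\R^n)/\lambda$ for this piece.

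The main obstacle is the estimate $\mu(\Omega)\lesssim |\nu|(\R^n)/\lambda$, where the enlargements $R_i$ can be much larger than $B_i$. I would handle it in the standard Tolsa fashion: extract a bounded-overlap subcollection of $\{R_i\}$ via a Besicovitch/$3r$-covering; absorb the $6$-fold enlargement using the $(6,6^{m+1})$-doubling of $R_i$; and pass from $\mu(R_i)$ to the CZ-control $\mu(2B_i)\lesssim|\nu|(B_i)/\lambda$ given by \eqref{CZ1}, interpolating across the intermediate non-doubling scales via Lemma \ref{no doubling balls between}. Combining the good-part bound, the complement estimate on $E\setminus\Omega$, and this control of $\mu(\Omega)$ yields the weak-$(1,1)$ inequality, with the dependence of the constant only on the kernel parameters, on $n$, and on the $L^2(\mu)$-norm of $\calC_\mu$.
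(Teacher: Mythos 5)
Your overall framework (Calder\'on--Zygmund decomposition, good part via $L^2$ and Chebyshev, bad part via cancellation off the enlarged balls) matches the paper, and your treatment of the good part and of the cancellation on $E\setminus\bigcup_i 8R_i$ (via \eqref{eq:yHol} and Lemma \ref{cone computation}) is essentially the paper's term $I$. However, there is a genuine gap in how you dispose of the region covered by the enlarged balls.

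You propose to set $\Omega:=\bigcup_i 6R_i$ and prove $\mu(\Omega)\lesssim|\nu|(\R^n)/\lambda$ by ``passing from $\mu(R_i)$ to the CZ-control $\mu(2B_i)\lesssim|\nu|(B_i)/\lambda$ \ldots interpolating across the intermediate non-doubling scales via Lemma \ref{no doubling balls between}.'' This step does not work, and it cannot be patched: $\mu(R_i)$ is \emph{not} controlled by $\mu(2B_i)$ (or by $|\nu|(B_i)/\lambda$). Since $R_i$ is the \emph{smallest} $(8,8^{m+1})$-doubling ball concentric with $B_i$ containing $4B_i$, all the intermediate concentric balls fail to be doubling, which means precisely that $\mu$ grows by a factor \emph{larger} than $8^{m+1}$ with each dilation; thus $\mu(R_i)$ may dwarf $\mu(4B_i)$. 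The CZ inequality \eqref{CZ1} controls $\mu(2B_i)$ but says nothing about $\mu(R_i)$, and \eqref{CZ2} only bounds $|\nu|(\eta B_i)$ from above by $\lambda\mu(2\eta B_i)$ -- it does not bound $\mu$ at scale $R_i$. Moreover, Lemma \ref{no doubling balls between} bounds the singular integral $\int_{B_2\setminus B_1}|y-x|^{-m}\,d\mu$, not the measure $\mu(B_2)$; it offers no comparison between $\mu(R_i)$ and $\mu(2B_i)$.

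The correct (and what ``the standard Tolsa fashion'' actually means here) handling is: throw away only $\bigcup_i 2B_i$, whose measure \emph{is} controlled by \eqref{CZ1}, and then estimate $\int_{E\setminus 2B_i}\calC b_i\,d\mu$ directly, splitting it into $\int_{E\setminus 8R_i}$ (your cancellation argument) and $\int_{8R_i\setminus 2B_i}$. For the latter piece one writes $b_i=w_i\nu-\varphi_i\mu$ and estimates the two contributions by different means: $\int_{8R_i\setminus 2B_i}\calC_\mu\varphi_i\,d\mu$ by Cauchy--Schwarz, the $L^2$-boundedness of $\calC_\mu$, the $(8,8^{m+1})$-doubling of $R_i$, and \eqref{CZ7}; and $\int_{8R_i\setminus 2B_i}\calC(w_i\nu)\,d\mu$ by the pointwise bound $\calC(w_i\nu)(y)\lesssim|\nu|(B_i)/|y-c_{B_i}|^m$ (from the kernel size estimate and Lemma \ref{cone computation}) together with Lemma \ref{no doubling balls between} on $R_i\setminus 2B_i$ and the order-$m$ property on $8R_i\setminus R_i$. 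This is exactly where the non-doubling of the intermediate scales is exploited positively rather than being an obstacle to a measure bound. You should replace your $\mu(\Omega)$ step with this two-part estimate of $\calC b_i$ over $8R_i\setminus 2B_i$.
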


\begin{proof}
Let $\nu \in \calM(E)$ and $\lambda>0$. We want to show that
$$
\mu\big(\{ y \in E \colon \calC\nu(y) > \lambda \}\big) \lesssim \frac{1}{\lambda} |\nu|(E).
$$
We may assume that $\lambda>2^{n+1} \frac{|\nu|(E)}{\mu(E)}$, since otherwise we have nothing to prove.  

Suppose first that $\nu$ is compactly supported. We apply Lemma \ref{CZ-dec.}  with $\lambda$ to the measures $\nu$ and $\mu$ to get a function $f$ and an almost disjoint collection $\{B_i\}$ of closed balls with centers in $E$ such that \eqref{CZ1}, \eqref{CZ2} and \eqref{CZ3} hold. For each $i$, let $R_i \supsetneq 4B_i$ be the smallest $(8,8^{m+1})$-doubling closed ball concentric with $B_i$, which exists by Lemma \ref{big doubling balls}. We apply Lemma \ref{CZ-dec.} with the balls $R_i$ to get a collection $\{\varphi_i\}$ of functions such that \eqref{CZ4}, \eqref{CZ5}, \eqref{CZ6} and \eqref{CZ7} hold.

Using the balls $B_i$ and the functions $\varphi_i$ we can write the measure $\nu$ as 
\begin{equation*}
\begin{split}
\nu=1_{\R^n \setminus \bigcup_iB_i}\nu+ 1_{\bigcup_i B_i} \nu &= f \mu +\sum_i w_i \nu \\
&= \big(f+\sum_i \varphi_i)\mu + \sum_i (w_i\nu - \varphi_i\mu).
\end{split}
\end{equation*}
Write $g= f+\sum_i\varphi_i$ and $b= \sum_i b_i=\sum_i( w_i\nu-\varphi_i\mu)$. Then we have
\begin{equation}\label{good and bad}
\mu\big(\{y \in E \colon \calC \nu(y)> \lambda\}\big) \leq \mu\big(\{y \in E \colon \calC_\mu g > \frac{\lambda}{2}\}\big)+ \mu\big(\{ y \in E \colon \calC b > \frac{\lambda}{2}\}\big).
\end{equation}

The $L^2(\mu)$-boundedness of $\calC_\mu$ and the fact that $\|f +\sum_i   \varphi_i\|_{L^{\infty}(\mu)} \lesssim \lambda$  by \eqref{CZ6} give
$$
\mu\big(\{y \in E \colon \calC_\mu g > \frac{\lambda}{2}\}\big) \lesssim \frac{1}{\lambda^2}  \| f +\sum_i \varphi_i \|_{L^2(\mu)}^2  
\lesssim \frac{1}{\lambda}  \| f +\sum_i \varphi_i \|_{L^1(\mu)}.
$$
Using Equations \eqref{CZ3}, \eqref{CZ4}, \eqref{CZ7}  and the bounded overlapping property of $\{B_i\}$ we get
\begin{equation*}
\begin{split}
\| f +\sum_i \varphi_i \|_{L^1(\mu)}  \leq \int |f| \ud \mu+ \sum_i \int |\varphi_i| \ud \mu & \leq |\nu|(\R^n) +\sum_i \|\varphi_i\|_{L^\infty(\mu)} \mu(R_i) \\
&\lesssim |\nu|(\R^n)+ \sum_i |\nu|(B_i) \\ 
&\lesssim |\nu|(\R^n).
\end{split}
\end{equation*}

Next, we consider the  second term  on the right hand side of \eqref{good and bad}. Note that
$$
\mu\big( \bigcup_i 2B_i\big) \leq \sum_i \mu(2B_i) \leq \frac{2^{n+1}}{\lambda}\sum_i |\nu|(B_i) \lesssim \frac{1}{\lambda} |\nu|(\R^n).
$$
Hence  we need to show that
$$
\mu\big(\{y \in E \setminus \bigcup_i 2B_i \colon \calC b > \frac{\lambda}{2}\}\big) \lesssim \frac{1}{\lambda} |\nu|(\R^n).
$$
First estimate as
\begin{equation*}
\begin{split}
\mu\big(\{y \in E \setminus \bigcup_i 2B_i \colon \calC b > \frac{\lambda}{2}\}\big) 
& \leq \frac{2}{\lambda} \int_{E \setminus \bigcup_i 2B_i} \calC b \ud \mu \\
& \leq \frac{2}{\lambda} \sum_i \int_{E \setminus 2B_i} \calC b_i \ud \mu.
\end{split}
\end{equation*}
We will prove that
\begin{equation}\label{single bad term}
\int_{E \setminus 2B_i} \calC b_i \ud \mu \lesssim |\nu| (B_i)
\end{equation}
holds for every $i$, which then concludes the proof because $\sum_i |\nu| (B_i) \lesssim |\nu|( \R^n)$.

Fix some $i$, and recall the ball $R_i$ related to the ball $B_i$. We begin the proof of \eqref{single bad term} by writing
$$
\int_{E \setminus 2B_i} \calC b_i \ud \mu  = \int_{E \setminus 8R_i} \calC b_i \ud \mu+ \int_{8R_i \setminus 2B_i} \calC b_i \ud \mu=: I + II.
$$
We consider the term $I$ first. Let $y \in E \setminus 8R_i$ and $x \in \Gamma(y)$. Let $c_{R_i}$ be the center of $R_i$, whence it follows that $|x-c_{R_i}| \geq 2r(R_i)$. Since $\supp b_i \subset R_i$ and $b_i(R_i)=0$, we may apply the $y$-continuity \eqref{eq:yHol} of the square function kernel to get
\begin{equation*}
\begin{split}
|Tb_i(x)| \lesssim \frac{r(R_i)^\beta }{|x-c_{R_i}|^{m+\alpha+\beta}} |b_i|(R_i)  \sim \frac{r(R_i)^\beta}{(|x-y|+ |y-c_{R_i}|)^{m+\alpha+\beta} } |b_i| (R_i).
\end{split}
\end{equation*}
Also
\begin{equation*}
\begin{split}
|b_i | (R_i) = |w_i \nu - \varphi_i\mu| (R_i) &\leq  \int_{R_i} w_i \ud |\nu| + \int_{R_i} |\varphi_i| \ud \mu  \\
 &\leq |\nu|(B_i) +\| \varphi_i \|_{L^\infty(\mu)} \mu(R_i) \lesssim |\nu|(B_i).
\end{split}
\end{equation*}
Thus
\begin{equation*}
\begin{split}
\calC b_i(y)^2 &\lesssim \int_{\Gamma(y)} \frac{r(R_i)^{2\beta} d(x,E)^{2\alpha}}{(|x-y|+|y-c_{R_i}|)^{2(m+\alpha+\beta)}} \ud \sigma(x) |b_i |(R_i)^2 \\
& \lesssim \frac{r(R_i)^{2\beta}}{|y-c_{R_i}|^{2(m+\beta)}} |\nu|(B_i)^2,
\end{split}
\end{equation*}
where we applied Lemma \ref{cone computation} in the second step. Because $\mu$ is of order $m$ we get
\begin{equation*}
\begin{split}
I =\int_{E \setminus 8R_i} \calC b_i \ud \mu 
&\lesssim  \int_{E \setminus 8R_i} \frac{r(B_i)^{\beta}}{|y-c_{R_i}|^{m+\beta}}  \ud \mu(y) |\nu|(B_i) \\
& \lesssim |\nu|(B_i)|.
\end{split}
\end{equation*}

It remains to consider the term $II$. Since $b_i= w_i \nu - \varphi_i \mu$ we have
\begin{equation*}
II=\int_{8R_i \setminus 2B_i} \calC b_i \ud \mu \leq \int_{8R_i \setminus 2B_i} \calC (w_i\nu) \ud \mu+\int_{8R_i \setminus 2B_i} \calC_\mu\varphi_i \ud \mu =: II_1+II_2.
\end{equation*}

The $L^2(\mu)$-boundedness of $\calC_\mu$ gives
\begin{equation*}
\begin{split}
II_2  \leq \mu(8R_i) ^\frac{1}{2} \| \calC_\mu\varphi_i \|_{L^2(\mu)}& \lesssim \mu(R_i)^\frac{1}{2} \| \varphi_i \|_{L^2(\mu)} \leq \| \varphi_i \|_{L^\infty(\mu)} \mu(R_i) \lesssim |\nu| (B_i),
\end{split}
\end{equation*}
where we used  the fact that $R_i$ is $(8,8^{m+1})$-doubling and the properties \eqref{CZ4} and \eqref{CZ7} of the function $\varphi_i$. 
 
Finally, we consider the term $II_1$. Suppose $y \in 8R_i \setminus 2B_i$ and $x \in \Gamma(y)$. Then, by Lemma \ref{distance to a point in E},
$$
|T (w_i\nu)(x)| \lesssim  \int_{B_i} \frac{\ud |\nu|(z)}{|x-z|^{m+\alpha}} \sim \frac{|\nu|(B_i)}{(|x-y|+|y-c_{B_i}|)^{m+\alpha}},
$$
and hence
\begin{equation*}
\begin{split}
\calC(w_i\nu)(y) &\lesssim \Big( \int_{\Gamma(y)} \frac{d(x,E)^{2\alpha}}{(|x-y|+|y-c_{B_i}|)^{2(m+\alpha)}} \ud \sigma (x) \Big)^{\frac{1}{2}} |\nu|(B_i) \\
&\lesssim \frac{1}{|y-c_{B_i}|^{m} }|\nu|(B_i).
\end{split}
\end{equation*} 
Integrating this over $8R_i \setminus 2B_i$ gives
\begin{equation*}
\begin{split}
II_1=\int_{8R_i \setminus 2B_i} \calC(w_i\nu) \ud \mu & \lesssim \int_{8R_i \setminus R_i} \frac{1}{|y-c_{B_i}|^{m}} \ud \mu(y)|\nu|(B_i) \\
&+  \int_{R_i \setminus 2B_i} \frac{1}{|y-c_{B_i}|^{m}} \ud \mu(y)|\nu|(B_i)  \\ 
&\lesssim |\nu|(B_i),
\end{split}
\end{equation*}
where we used Lemma \ref{no doubling balls between} to estimate the integral over $R_i \setminus 2B_i$. This finishes the proof \eqref{single bad term}, and hence also the proof Theorem \ref{weak (1,1)} in the case when $\nu$ is compactly supported.

Suppose then $\nu$ is not compactly supported but $\mu$ is compactly supported. Suppose $M>0$ is such that $\supp \mu \subset B(0,M/2)$. Write $\tilde{\nu}:= \nu \lfloor ( E \setminus B(0,M))$. Then for any $y \in \supp \mu$ and $x \in \Gamma(y)$ we have
\begin{equation*}
\begin{split}
\big|T\tilde{\nu}(x)\big| &\lesssim \int_{E \setminus B(0,M)} \frac{\ud |\nu|(z)}{(|x-y|+|y-z|)^{m+\alpha}} \\ 
&\leq  \frac{|\nu| (\R^n)}{\big(|x-y|+d(y,E \setminus B(0,M))\big)^{m+\alpha}},
\end{split}
\end{equation*}
and this gives by Lemma \ref{cone computation} that
\begin{equation*}
\begin{split}
\calC \tilde{\nu}(y) &\lesssim  \Big(\int_{\Gamma(y)} \frac{d(x,E)^{2\alpha}}{\big(|x-y|+d(y,E \setminus B(0,M))\big)^{2(m+\alpha)}}\ud \sigma(x) \Big)^\frac{1}{2}|\nu| (\R^n) \\
& \lesssim \frac{1}{d(y,E \setminus B(0,M))^{m}} |\nu|(\R^n).
\end{split}
\end{equation*}
Hence, if $M$ is big enough, we get
\begin{equation*}
\begin{split}
\mu\big(\{y \in E \colon \calC \nu (y) > \lambda\}\big) &\leq \mu\big(\{y \in E \colon \calC \big(\nu\lfloor  B(0,M)\big)(y) > \frac{\lambda}{2}\} \big)\\
& \lesssim \frac{1}{\lambda} |\nu| \big(B(0,M)\big) \leq \frac{1}{\lambda} |\nu|(\R^n),
\end{split}
\end{equation*}
where the second inequality holds because $\nu \lfloor B(0,M)$ is compactly supported.

Suppose finally that neither $\nu$ nor $\mu$ is compactly supported. Then for  every $M>0$ it holds that
\begin{equation*}
\begin{split}
\mu\lfloor B(0,M) \big(\{ y \in E \colon \calC \nu (y) > \lambda \}\big)
\lesssim \frac{1}{\lambda} |\nu|(\R^n),
\end{split}
\end{equation*}
because $\mu \lfloor B(0,M)$ is compactly supported. Letting $M$ tend to infinity concludes the proof.

\end{proof}


\begin{thebibliography}{99}
\bibitem{AHMTT} P. Auscher, S. Hofmann, C. Muscalu, T. Tao, C. Thiele, Carleson measures, trees, extrapolation, and T(b) theorems. Publ. Mat. 46 (2002), no. 2, 257--325. 
 
\bibitem{AH} { P.  Auscher, T. Hyt\"onen, Orthonormal bases of regular wavelets in spaces of homogeneous type. Appl. Comput. Harmon. Anal. 34 (2013), no. 2, 266--296.} 
 
 
\bibitem{AR}{P. Auscher, E. Routin,  Local Tb theorems and Hardy inequalities. J. Geom. Anal. 23 (2013), no. 1, 303--374.}

\bibitem{AY} {P. Auscher, Q.  Yang, BCR algorithm and the T(b) theorem. Publ. Mat. 53 (2009), no. 1, 179--196.}

\bibitem{AHMMMTV} J. Azzam, S. Hofmann, J. M. Martell, S. Mayboroda, M. Mourgoglou, X. Tolsa, A. Volberg, Rectifiability of harmonic measure. Geom. Funct. Anal., to appear, arXiv:1509.06294, 2015.

\bibitem{DS}{G. David, S. Semmes, Analysis of and on uniformly rectifiable sets. Mathematical Surveys and Monographs, 38. American Mathematical Society, Providence, RI, 1993.}

\bibitem{GM}{A. Grau de la Herran, M. Mourgoglou, A local Tb theorem for square functions in domains with Ahlfors-David regular boundaries. J. Geom. Anal. 24 (2014), no. 3, 1619--1640.}

\bibitem{Ho1} S. Hofmann, A proof of the local $Tb$ theorem for standard Calder\'on--Zygmund operators. Unpublished manuscript, arXiv:0705.0840, 2007.

\bibitem{HMMM} S. Hofmann, D. Mitrea, M. Mitrea, A. Morris, $L^p$-square function estimates on spaces of homogeneous type and on uniformly rectifiable sets.
Mem. Amer. Math. Soc., to appear, arXiv:1301.4943, 2013.

\bibitem{HM}{T. Hyt\"onen, H. Martikainen, Non-homogeneous Tb theorem and random dyadic cubes on metric measure spaces. J. Geom. Anal. 22 (2012), no. 4, 1071--1107.}

\bibitem{HN} T. Hyt{\"o}nen, F. Nazarov, The local $Tb$ theorem with rough test functions. Preprint, arxiv:1206.0907, 2012.

\bibitem{MM1} {H. Martikainen, M. Mourgoglou, Square functions with general measures. Proc. Amer. Math. Soc. 142 (2014), no. 11, 3923--3931.}

\bibitem{MMO} H. Martikainen, M. Mourgoglou, T. Orponen, Square functions with general measures II. Indiana Univ. Math. J. 63 (2014), no. 5, 1249--1279. 

\bibitem{MMT} H. Martikainen, M. Mourgoglou, X. Tolsa, Improved Cotlar's inequality in the context of local Tb theorems. Preprint, arXiv:1512.02950, 2015.

\bibitem{MMV} H. Martikainen, M. Mourgoglou, E. Vuorinen, A new approach to non-homogeneous local Tb theorems: Square functions and weak (1,1) testing with measures. Preprint, arXiv:1511.00528, 2015.

\bibitem{MaBook} P. Mattila, Geometry of sets and measures in Euclidean spaces. Fractals and rectifiability. Cambridge Studies in Advanced Mathematics, 44. Cambridge University Press, Cambridge, 1995. 

\bibitem {NTV} F. Nazarov,  S. Treil, A. Volberg, The Tb-theorem on non-homogeneous spaces that proves a conjecture of Vitushkin. CRM preprint (2002), 519:1--84.

\bibitem{NTV1} F. Nazarov, S. Treil,  A. Volberg, The Tb-theorem on non-homogeneous spaces. Acta Math. 190 (2003), no. 2, 151--239. 

\bibitem{S} E. Stein, Harmonic analysis: real-variable methods, orthogonality, and oscillatory integrals. With the assistance of Timothy S. Murphy. Princeton Mathematical Series, 43. Monographs in Harmonic Analysis, III. Princeton University Press, Princeton, NJ, 1993. 

\bibitem{ToBook} X. Tolsa, Analytic capacity, the Cauchy transform, and non-homogeneous Calderón-Zygmund theory. Progress in Mathematics, 307. Birkh\"auser/Springer, Cham, 2014.

\bibitem {V} A. Volberg, Calder\'on-Zygmund capacities and operators on nonhomogeneous spaces. 

CBMS Regional Conference Series in Mathematics, 100. Published for the Conference Board of the Mathematical Sciences, Washington, DC; by the American Mathematical Society, Providence, RI, 2003.
\end{thebibliography}
\end{document}